\documentclass[11pt,reqno]{amsart}
\usepackage[tt=false]{libertine}
\usepackage{amssymb}
\usepackage{mathtools}
\usepackage[varbb]{newpxmath}
\usepackage[foot]{amsaddr}

\let\savedbigtimes\bigtimes
\let\bigtimes\relax
\usepackage{mathabx}
\let\bigtimes\savedbigtimes

\usepackage[margin=1in, bottom=1in]{geometry}
\usepackage{graphicx}
\usepackage{caption}
\usepackage{subcaption}
\usepackage{enumitem}
\usepackage{algorithm}
\usepackage{algpseudocode}
\usepackage{comment}
\usepackage{bbm}

\usepackage[dvipsnames]{xcolor}

\usepackage[colorlinks=true,
  linkcolor=teal!60!black,
  citecolor=PineGreen,
  urlcolor=RedViolet]{hyperref}
\hypersetup{bookmarksopen=true}
\usepackage[yyyymmdd,hhmmss]{datetime}

\usepackage{tikz}
\usetikzlibrary{decorations.pathreplacing,calligraphy}
\usetikzlibrary{calc}

\newtheorem{thm}{Theorem}[section]
\newtheorem{lem}[thm]{Lemma}

\newtheorem{prop}[thm]{Proposition}

\theoremstyle{definition}

\theoremstyle{remark}

\numberwithin{equation}{section}
\def\beq#1\eeq{%
    \begin{equation}%
    #1%
    \end{equation}%
}
\def\baln#1\ealn{%
    \begin{align}%
    #1%
    \end{align}%
}
\def\balnn#1\ealnn{%
    \begin{align*}%
    #1%
    \end{align*}%
}

\def\bA{{\boldsymbol{A}}}
\def\bB{{\boldsymbol{B}}}
\def\bC{{\boldsymbol{C}}}

\def\bG{{\boldsymbol{G}}}

\def\bI{{\boldsymbol{I}}}
\def\bJ{{\boldsymbol{J}}}

\def\bW{{\boldsymbol{W}}}

\def\bY{{\boldsymbol{Y}}}
\def\bZ{{\boldsymbol{Z}}}

\def\be{{\boldsymbol{e}}}

\def\bg{{\boldsymbol{g}}}
\def\bh{{\boldsymbol{h}}}

\def\bm{{\boldsymbol{m}}}

\def\bu{{\boldsymbol{u}}}
\def\bv{{\boldsymbol{v}}}
\def\bw{{\boldsymbol{w}}}

\def\by{{\boldsymbol{y}}}
\def\bz{{\boldsymbol{z}}}
\def\bone{{\boldsymbol{1}}}

\def\bbE{{\mathbb{E}}}

\def\bbP{{\mathbb{P}}}

\def\bbR{{\mathbb{R}}}

\def\bbZ{{\mathbb{Z}}}

\def\cA{{\mathcal{A}}}

\def\cF{{\mathcal{F}}}

\def\cH{{\mathcal{H}}}

\def\cK{{\mathcal{K}}}

\def\cN{{\mathcal{N}}}

\def\cZ{{\mathcal{Z}}}

\def\de{{\mathsf{d}}}
\def\TV{{\mathsf{TV}}}
\def\KL{{\mathsf{KL}}}
\def\pro{{\mathsf{pro}}}

\def\vec{{\mathsf{vec}}}

\makeatletter
\newenvironment{breakablealgorithm}
{
		\begin{center}
			\refstepcounter{algorithm}
			\hrule height.8pt depth0pt \kern2pt
			\renewcommand{\caption}[2][\relax]{
				{\raggedright\textbf{\ALG@name~\thealgorithm} ##2\par}%
				\ifx\relax##1\relax 
				\addcontentsline{loa}{algorithm}{\protect\numberline{\thealgorithm}##2}%
				\else 
				\addcontentsline{loa}{algorithm}{\protect\numberline{\thealgorithm}##1}%
				\fi
				\kern2pt\hrule\kern2pt
			}
		}{
		\kern2pt\hrule\relax
	\end{center}
}
\makeatother

\title[Efficient Posterior Sampling for $\mathbb Z_2$ Synchronization]{Efficient Posterior Sampling for $\mathbb Z_2$ Synchronization}

\author{Zhangsong Li}

\address[Z.~Li]{School of Mathematical Sciences, Peking University}
\email[Z.~Li]{ramblerlzs@pku.edu.cn}

\date{\today}

\begin{document}

\begin{abstract}
    Consider the $\mathbb Z_2$ synchronization problem
    \[
        \bY = \frac{\lambda}{\sqrt n} \theta\theta^{\top} + \bZ,
    \]
    where $\theta$ is uniform on $\{-1,1\}^n$ and $\bZ$ is an independent Gaussian Wigner matrix with off-diagonal variance one. We give a polynomial-time posterior sampling algorithm for every fixed $\lambda>1$, for which the conditional output law converges to the posterior in total variation, in expectation over the observation. The construction combines sequential TAP proposals with an independence Metropolis correction. The key is to control signed overlaps after logarithmic pinning and conditional TAP approximations along a random revealing path, which give an efficiently evaluable proposal with a polynomial density-ratio bound outside a set of vanishing posterior mass. To the best of our knowledge, this is the first polynomial-time posterior sampler for $\mathbb Z_2$ synchronization with a total-variation guarantee throughout the supercritical regime. For comparison, the diffusion-based sampler of \cite{montanari2023posterior} gives normalized Wasserstein guarantees at sufficiently large fixed signal-to-noise ratio. 
\end{abstract}

\maketitle
\enlargethispage{3pt}

\setcounter{tocdepth}{1}
\tableofcontents

\section{Introduction and main results}\label{s:intro}

Spiked random matrices provide a fundamental model in high-dimensional statistics and probability; see, for example, \cite{johnstone2000distribution}. We consider the $\bbZ_2$ synchronization model
\[
    \bY = \frac{\lambda}{\sqrt n}\theta\theta^{\top}+\bZ,
\]
where $\theta$ is uniform on $\{-1,1\}^n$ and independent of the real symmetric Gaussian matrix $\bZ$. The entries $(Z_{ij})_{i<j}$ are independent $\cN(0,1)$ variables, the diagonal entries are independent $\cN(0,2)$ variables, and these two families are independent. For this prior and normalization, $\lambda=1$ is the threshold for weak recovery of the planted signal $\theta$: below the threshold no estimator has nonvanishing absolute normalized correlation with the spike, whereas above it a leading-eigenvector estimator is informative \cite{deshpande2017asymptotic, montanari2021estimation}. The associated spectral transition is the Wigner counterpart of the BBP transition \cite{baik2005phase, peche2006largest}. A substantial literature characterizes optimal estimation error, efficient algorithms, and higher-rank extensions \cite{dia2016mutual, deshpande2017asymptotic, el2018estimation, lelarge2017fundamental, miolane2017fundamental, barbier2019adaptive, montanari2021estimation}. In particular, TAP optimization and approximate message passing yield asymptotically Bayes-optimal matrix estimation for $\bbZ_2$ synchronization \cite{fan2021tap, celentano2023local}.

In the case of $\bbZ_2$ synchronization, given the matrix $\bY$, the posterior distribution of $\theta$ is described by an Ising model: a probability distribution $\mu_\bJ$ on $\{ \pm1 \}^n$ defined by
\begin{equation}{\label{eq:posterior}}
    \mu_{\bJ}(x) \propto \exp\left( H_{\bJ}(x) \right), \qquad H_\bJ(x)= \frac{1}{2} \langle x, \bJ x \rangle, \qquad \bJ = \frac{\lambda}{\sqrt{n}} \bY = \frac{\lambda^2}{n} \theta\theta^{\top} + \lambda \bW.
\end{equation}
Here $\bW = \frac{1}{\sqrt n}\bZ$ is an $n \times n$ GOE matrix, with off-diagonal variance $\frac{1}{n}$ and diagonal variance $\frac{2}{n}$. We also write $\mu_{\bY}=\mu_{\bJ}$ for this posterior. We show that efficient posterior sampling is possible throughout the supercritical regime $\lambda>1$. Our main result is the following.
\begin{thm}\label{thm:sampling}
    There is a randomized algorithm (see Algorithm~\ref{alg:posterior-sampler}) that takes $\bY$ and the known parameter $\lambda$ as input and outputs a sample $\widehat{\theta} \sim \widehat{\mu}$, with running time polynomial in $n$ for each fixed $\lambda>1$, whose conditional output law $\widehat\mu$ satisfies
    \begin{equation}\label{eq:main-tv}
        \bbE_{\bY} \| \widehat\mu(\cdot \mid \bY)-\mu_{\bY} \|_{\TV} \to 0.
    \end{equation}
    In particular, $\|\widehat\mu(\cdot\mid\bY)-\mu_{\bY} \|_{\TV}=o_{\bbP}(1)$. 
\end{thm}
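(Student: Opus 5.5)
The plan follows the architecture announced in the abstract: build an efficiently computable proposal law $\nu=\nu_\bY$ on $\{\pm1\}^n$, with explicitly evaluable $\nu(x)$, from sequential TAP approximations. Then correct it with an independence Metropolis--Hastings chain targeting $\mu_\bJ$, using acceptance ratio $\min\{1, \exp(H_\bJ(x')-H_\bJ(x))\nu(x)/\nu(x')\}$. The quantitative input we need is a density-ratio bound. Write $w(x)=\mu_\bJ(x)/\nu(x)$. The standard independence-sampler estimate says that if $w(x)\le M$ for all $x$ outside a set $B$ with $\mu_\bJ(B)=o(1)$, then after $T\gg M$ steps the chain, started from a draw of $\nu$, is within $o(1)+\mu_\bJ(B)+(1-1/M)^T$ of $\mu_\bJ$ in total variation. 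To run the chain on the modified target $\mu_\bJ(\cdot\mid B^c)$ we would use the restricted comparison, and since $\mu_\bJ(B)=o(1)$ in expectation this only costs $o(1)$. Taking $M=n^{C}$ and $T=n^{C+1}$ gives a polynomial running time, because each step costs only one evaluation of $H_\bJ$ and of $\nu$.

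\textbf{The proposal.} We reveal coordinates in a uniformly random order $\pi$. At step $k$, having fixed $x_{\pi(1)},\dots,x_{\pi(k-1)}$, we compute an approximate conditional magnetization $\bm^{(k)}$. This is obtained by AMP/TAP iteration on the residual model with external field $\bJ_{\cdot,S}x_S$, initialized at the spectral estimate. We then sample $x_{\pi(k)}$ with probability $(1+ x\, m^{(k)}_{\pi(k)})/2$, clipped away from $\pm1$ by $n^{-C}$. Two features of this construction matter. The sign ambiguity $\theta\leftrightarrow-\theta$ is handled because the posterior is symmetric, so we symmetrize $\nu$ (average $\nu(x)$ and $\nu(-x)$). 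The ``logarithmic pinning'' enters by first revealing $O(\log n)$ coordinates drawn from the proposal, which makes the signed overlap with $\theta$ concentrate on one branch; after that, the TAP fixed point near $\pm\sqrt{q_*}$ overlap is locally unique and stable.

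\textbf{Density ratio.} Write $\log w(x)=\sum_k \log\bigl(\mu_\bJ(x_{\pi(k)}\mid x_{\pi(<k)})/\nu(x_{\pi(k)}\mid x_{\pi(<k)})\bigr)+\text{const}$. Per step, the error is controlled by $|m^{(k)}_{\pi(k)}-\langle x_{\pi(k)}\rangle_{\mu(\cdot\mid x_{\pi(<k)})}|$. We therefore need $\sum_k (\text{error}_k)^2/(1-m^2)=O(\log n)$ for $\mu_\bJ$-typical $x$, which gives $w\le n^{C}$ outside a small set $B$. The argument will use, in order, three results from the paper. First, replica-symmetric TAP validity for the pinned posterior at every $\lambda>1$ (Bayes-optimal Nishimori setting, with overlap concentration via the I-MMSE/interpolation results of \cite{deshpande2017asymptotic, el2018estimation}). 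Second, a conditional TAP lemma: along a random revealing path, the posterior conditioned on $x_S$ is again a spiked model with side information, namely a planted model with a revealed subset. This keeps us in Nishimori land, so the overlap concentrates and AMP tracks the magnetizations with $\ell_2$ error $o(\sqrt n)$. Third, averaging over the random position $\pi(k)$, which converts the $\ell_2$ bounds into per-coordinate errors whose squares sum to the needed order.

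\textbf{Main obstacle.} The hard step is getting summed squared errors of order $O(\log n)$ rather than $o(n)$. Raw AMP accuracy $o(\sqrt n)$ only gives $\sum_k \text{error}_k^2=o(n)$, which yields $w\le e^{o(n)}$, far from polynomial. My first attempt will be a telescoping or I-MMSE style identity. Along the revealing path, $\bbE\sum_k \text{error}_k^2$ is bounded by the KL divergence between $\mu_\bJ$ and the ideal sequential sampler. I would compare this with the free-energy difference between the TAP free energy and the true one, and show that the difference is $O(\log n)$ using the local convexity of the TAP functional near its fixed point for $\lambda>1$ \cite{celentano2023local}, together with Nishimori symmetry to make the conditional laws planted. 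A Markov inequality over $\bY$ and over $x\sim\mu_\bJ$ then produces the set $B$ of vanishing mass, and the independence Metropolis bound closes \eqref{eq:main-tv}.
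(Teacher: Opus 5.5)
Your architecture matches the paper's: a random revealing order, a sequential proposal with computable log-density, an $O(\log n)$ prefix that costs only a polynomial factor, and an independence Metropolis correction driven by a polynomial density-ratio bound off a set of small posterior mass. But there is a genuine gap exactly at the step you flag as the main obstacle, and the remedy you sketch does not close it.

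The first half of your plan, bounding $\bbE\sum_k\mathrm{error}_k^2$ by a KL divergence, is essentially the chain rule $\bbE\,\KL(\mu_\bJ\Vert\nu)=\sum_k\bbE\,\KL(p_k\Vert q_k)$. So everything rests on the second half, and that comparison is not available. The proposal $\nu$ is not a Gibbs measure for $H_\bJ$, so $\KL(\mu_\bJ\Vert\nu)$ is not a difference between a TAP free energy and $\log Z$. Local convexity of the TAP functional also carries no rate by itself. The cited local-convexity and state-evolution results give only $\|\widehat{\bm}-\bm\|=o(\sqrt n)$, which is the $o(n)$ bound you already called insufficient. A polynomial density ratio requires mean squared error $O(1/n)$ for the next-coordinate logit at every step $k\ge k_0$, and nothing in your plan produces any rate.

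The missing idea is to bound the TAP stationarity defect of the \emph{exact} conditional mean,
$\bbE\|\nabla_{\bu}\Phi_k(\bm^{(k)}(\theta_{S_k});\theta_{S_k})\|^2\le C_\lambda(n-k)/n$ (Lemma~\ref{lem:tap-gradient}). Strong convexity on a domain containing $\bm^{(k)}$ then turns this into an $O((n-k)/n)$ squared error for the logit vector $\widehat{\bh}^{(k)}+(\bJ_{R_k,R_k}-a_{k/n}\bI)\widehat{\bm}^{(k)}$. Working with logits rather than magnetizations, as opposed to your weighting by $1/(1-m^2)$, keeps errors near $\pm1$ from being amplified.

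\begin{itemize}
\item \textbf{The rate.} The residual bound comes from a one-site cavity computation whose error terms are $\theta^\top\bC\theta$, $\bm^\top\bC\bm$, $\operatorname{Tr}(\bC^2)$ and $(\|\bm\|^2-Nq)^2$. These are $O(N)$ only because of the second-moment bound $\bbE\langle(R_{12}-q)^2\rangle\le C_\lambda/N$, uniformly for $k\ge A_\lambda\log n$ (Proposition~\ref{prop:logarithmic-pinning}). That rate does not follow from the I-MMSE/interpolation concentration you cite. Known signed-overlap estimates also degenerate as the side-channel strength $\lambda^2k/n\to0$, which is exactly the regime right after pinning. The paper proves the rate by an exact pinned-versus-unpinned change of measure (negative-phase replica pairs rarely agree on $A_\lambda\log n$ random pins), fed into the cavity moment system.
\item \textbf{Simultaneity.} Per-step AMP guarantees hold only at fixed $k$ and with no rate. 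The stopping argument in Lemma~\ref{lem:nu-is-warm} instead needs a single event of probability $1-o(1)$ on which mean containment and strong convexity hold for all $k$ at once. The paper obtains this from finitely many checkpoints, Doob's inequality for the martingale $\bbE[\theta\mid\cH_k]$, and a principal-submatrix Hessian identity (Lemma~\ref{lem:convexity-barycenter}).
\item \textbf{The first $O(\log n)$ coordinates.} Before pinning, the conditional law is a genuine two-phase mixture, so TAP-based marginals for these coordinates have uncontrolled error. The paper draws those signs uniformly, paying $2^{k_0}=n^{O_\lambda(1)}$, and uses them only to orient the initial center.
\end{itemize}
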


Theorem~\ref{thm:sampling} concerns the full posterior, including its global sign symmetry, and requires no supplied side information. The running-time exponent and constants may depend on the fixed value of $\lambda>1$; we do not optimize them or obtain a bound uniform as $\lambda\downarrow1$. The error guarantee is asymptotic in $n$, without a running-time bound polynomial in a prescribed inverse error tolerance. 

Our work builds on the local-convexity theory for the TAP free energy around AMP-selected solutions \cite{celentano2023local, celentano2024sudakov} and is closely related to the diffusion-based posterior sampling framework of \cite{montanari2023posterior}. We also use the overlap and cavity identities of \cite{alaoui2017finite}. The main novelty are new estimates that control conditional TAP approximations along a random coordinate-revealing process, beginning after only logarithmically many revealed coordinates. They provide a quantitative pinning estimate and transfer local convexity from finitely many checkpoints to the intervening conditional models. Together, these estimates give a sequential proposal whose target-to-proposal density ratio is polynomially bounded outside a set of vanishing posterior mass. The standard independence Metropolis correction then yields total-variation convergence.

\subsection{Proof strategy}\label{ss:proof-overview}

We sketch the proposal construction and the main probabilistic ingredients in its analysis. The starting point is a simple reduction: to approximately sample from $\mu_\bJ$, it suffices to construct a \emph{proposal measure} $Q$ from which we can sample independently and whose log-density we can evaluate efficiently, such that the Radon-Nikodym derivative $\de \mu_\bJ/\de Q$ is polynomially bounded outside a set of vanishing expected posterior mass. As proved in Lemma~\ref{lem:metro-correction}, a standard independence Metropolis correction then yields a polynomial-time sampler with vanishing total-variation error. 

We construct the proposal distribution $Q$ sequentially. Fix $\lambda>1$, and let $I_1,\ldots,I_n$ be a uniformly random permutation of $\{ 1,\ldots,n \}$. The order is kept fixed throughout proposal generation, density evaluation, and Metropolis correction. Write $S_k=\{ I_1,\ldots,I_k \}$ and $R_k = \{ 1,\ldots,n \} \setminus S_k$. We sample the first $k_0=O_\lambda(\log n)$ signs independently and uniformly; this costs at most a factor $2^{k_0}=n^{O_\lambda(1)}$ in the target-to-proposal density ratio. At each subsequent stage, the task is to approximate the next-coordinate conditional law given the generated prefix $x_{S_k}$. The conditional posterior on $R_k$ retains the rank-one interaction and has a prefix-dependent external field, as displayed in \eqref{eq:conditioinal-distribution}. Its binary marginals are determined by the conditional mean $\bm^{(k)}(x_{S_k})$, or equivalently by the logit vector $\boldsymbol\ell^{(k)}(x_{S_k})=\operatorname{arctanh}(\bm^{(k)}(x_{S_k}))$. Here, as in Section~\ref{s:algorithm}, logits denote half-log-odds.

To approximate these marginals, we use AMP to construct centers for the constraint sets and minimize the TAP objective $\Phi_k(\cdot;x_{S_k})$ defined in \eqref{eq:def-Phi-k}. As proved in \cite{celentano2023local,celentano2024sudakov}, the optimizer $\widehat{\bm}^{(k)}(x_{S_k})$ serves as a good approximation of the true conditional mean. We then use $\widehat{\bm}^{(k)}(x_{S_k})$ to construct the logit predictor \eqref{eq:sampling-proposal-logits}. This TAP expression avoids applying $\operatorname{arctanh}$ to the approximate mean, which would amplify errors near $\pm1$. For the analysis, both the exact and computed quantities are evaluated at the same posterior-distributed prefix $x_{S_k}=\theta_{S_k}$; the proposal routine uses only its generated or supplied prefix. The approximation bound relies on two separate inputs: strong convexity on a constraint set containing the true conditional mean, and a quantitative bound on the TAP gradient at that mean.

The first difficulty is to obtain the required strong convexity \emph{simultaneously} for all $k_0\le k<n$. The local-convexity results of \cite{celentano2023local,celentano2024sudakov}, recorded in Lemmas~\ref{lem:unpinned-tap-input} and~\ref{lem:side-amp-input}, give high-probability bounds at fixed parameter values, but do not directly supply a simultaneous estimate over all revealing times. We instead choose finitely many deterministic checkpoints, with their number depending only on $\lambda$. The initial center comes from spectral AMP followed by natural-gradient refinement, with its sign chosen using the first $k_0$ coordinates. At later checkpoints, a fixed number of AMP iterations gives a new center, which is retained and restricted to the remaining coordinates until the next checkpoint. The crucial fact is that the full conditional mean $\bbE[\theta\mid\bJ,(I_a,\theta_{I_a})_{a\le k}]$ is a martingale. Thus, a maximal inequality controls its motion between checkpoints, while a deterministic principal-submatrix identity transfers the Hessian bounds. Once the remaining interaction block is sufficiently small, convexity holds on the entire cube. Lemma~\ref{lem:convexity-barycenter} combines these arguments to obtain, with high probability, both containment of the conditional mean and strong convexity throughout the revealing path.

The second difficulty is to control the TAP gradient residual uniformly in $k$, especially near $k_0$. In this range, the effective Gaussian side-channel strength $\lambda^2k/n$ tends to zero, and the signed-overlap concentration estimates in \cite{alaoui2017finite} are not uniform enough for our purpose. We therefore prove concentration of the signed posterior overlap after revealing only logarithmically many coordinates. The following consequence of Lemma~\ref{lem:uniform-pinned-overlap} states the estimate directly for the original posterior, and may be of independent interest.

\begin{prop}[Overlap concentration after logarithmic pinning] \label{prop:logarithmic-pinning}
Fix $\lambda>1$. There are constants $A_\lambda,C_\lambda<\infty$ such that the following holds for every $n\ge2$ and every integer $A_\lambda\log n\le k<n$. Let $I=(I_1,\ldots,I_n)$ be a uniformly random permutation, independent of $(\theta,\bJ)$, and put
\[
    S_k=\{I_1,\ldots,I_k\},\qquad R_k=[n]\setminus S_k,\qquad N=n-k.
\]
Conditional on $(\bJ,I,\theta_{S_k})$, let $\sigma^1,\sigma^2$ be independent samples from $\mu_\bJ(\cdot\mid x_{S_k}=\theta_{S_k})$, and write $\langle\cdot\rangle$ for their conditional average. Let $q$ be the unique positive solution of
\[
    q=\bbE\tanh^2\left( \frac{\lambda^2(Nq+k)}n +\lambda\sqrt{\frac{Nq+k}n}\,G\right), \qquad G\sim\cN(0,1).
\]
Then
\[
    \bbE\left\langle \left(\frac1N\sum_{i\in R_k}\sigma_i^1\sigma_i^2-q\right)^2 \right\rangle\le\frac{C_\lambda}{N}.
\]
Moreover, if $\bm$ and $\bC$ are the mean and covariance of the conditional posterior on $R_k$, then
\[
    \bbE\left[\operatorname{Tr}(\bC^2) +(\|\bm\|^2-Nq)^2\right]\le C_\lambda N.
\]
The expectations include the observation, planted signal, and random order, and the constants are independent of $n$ and $k$.
\end{prop}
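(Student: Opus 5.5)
The plan is to reduce the statement to a pinned spiked model with a Gaussian side channel, and then run the overlap-concentration machinery of \cite{alaoui2017finite}, made uniform in the side-channel strength. Conditional on $(\bJ,I,\theta_{S_k})$, the posterior on $R_k$ is an Ising model with coupling block $\bJ_{R_kR_k}$ and external field $h_i=\sum_{j\in S_k}J_{ij}\theta_j=\frac{\lambda^2 k}{n}\theta_i\cdot\frac{1}{k}\sum_{j\in S_k}\theta_j^2+\lambda(\bW\theta)_{i}|_{S_k}$. Since $\theta_j^2=1$, the field is $\frac{\lambda^2 k}{n}\theta_i+\lambda\sqrt{k/n}\,g_i$ with $g_i$ i.i.d.\ standard Gaussians, independent of $\bW_{R_kR_k}$. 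So the conditional model is exactly the $N$-dimensional $\bbZ_2$ synchronization with effective SNR $\lambda^2N/n$ and an independent scalar Gaussian side channel of strength $\gamma=\lambda^2k/n$. The fixed point for $q$ is the replica-symmetric equation of that model: the total effective channel strength is $\lambda^2(Nq+k)/n$.

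Next I will introduce an interpolating side channel $\sqrt{s}\,\theta_i+g_i'$ and use the Nishimori identity. Differentiating the conditional free energy $F(s)$ gives $F'(s)=\frac12\bbE\langle R_{12}\rangle$ and $F''(s)\propto N\,\bbE\langle (R_{12}-\langle R_{12}\rangle)^2\rangle$-type terms. The Ghirlanda--Guerra/cavity identities of \cite{alaoui2017finite} then convert variance of the overlap into $\frac1N$ plus the fluctuation of the free energy. Concentration of $F$ at scale $N^{-1/2}$ (Gaussian Poincaré) and convexity in $s$ yield $\bbE\langle(R_{12}-\bbE\langle R_{12}\rangle)^2\rangle\le C/N$ after averaging over a window of $s$. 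I would remove the window by exploiting the fact that the revealed side channel $k$ itself plays the role of $s$, so the random-order averaging over $k$ costs only a constant when $k\ge A_\lambda\log n$. I then need to identify $\bbE\langle R_{12}\rangle=q+O(N^{-1/2})$. For this I will use the cavity computation: add one coordinate, and the new spin sees field $\frac{\lambda^2}{n}\sum_j\sigma_j\theta_j+\ldots$, which is Gaussian with parameters given by the overlap. This gives $\bbE\langle R_{12}\rangle\approx\bbE\tanh^2(\ldots)$ evaluated at the overlap, and uniqueness and stability of the positive fixed point for $\lambda>1$ (the map has slope $<1$ there) close the estimate quantitatively.

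The main obstacle is excluding the trivial/sign-symmetric branch when $\gamma=\lambda^2k/n$ is tiny. For $k\approx\log n$ the side channel barely breaks the $\pm$ symmetry, so the signed overlap could a priori split between $\pm q$ or sit near $0$. Here I would use $\lambda>1$: the unpinned posterior concentrates $|R_{12}|$ near $q_*>0$ (Lemma~\ref{lem:unpinned-tap-input}), and the overlap with $\theta$ is bimodal $\pm\sqrt{q_*}$. Conditioning on $\theta_{S_k}$ for $k$ uniform positions tilts the relative weight of the two modes by a likelihood ratio $\exp(\Theta(k))$, because each revealed coordinate agrees with the correct mode with probability $(1+\sqrt{q_*})/2$. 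Taking $A_\lambda$ large makes the wrong mode have posterior weight $\le n^{-2}$, which is negligible against $C/N$. This is where $A_\lambda$ is chosen.

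Finally, the second display follows algebraically. We have $\langle (R_{12}-q)^2\rangle=\frac{1}{N^2}\bigl(\operatorname{Tr}\bC^2+2\langle\ldots\rangle\bigr)+(\|\bm\|^2/N-q)^2$ up to cross terms. Indeed, $\langle R_{12}\rangle=\|\bm\|^2/N$ and $\operatorname{Var}\langle R_{12}\rangle$ is $N^{-2}\operatorname{Tr}(\bC^2)$ plus nonnegative terms. Multiplying by $N^2$ gives $\bbE[\operatorname{Tr}\bC^2+(\|\bm\|^2-Nq)^2]\le C_\lambda N$.
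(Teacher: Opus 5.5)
\textbf{There is a genuine gap: the core $C_\lambda/N$ bound is not delivered by the route you sketch.} Your reduction to the $N$-site side-channel model (signal $\lambda^2/n$, field $t\theta_i+\sqrt t\,g_i$ with $t=\lambda^2k/n$) is correct, and so is your final algebraic step.

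The interpolation argument you describe combines Nishimori derivatives in an auxiliary channel, Gaussian--Poincar\'e concentration of the free energy at scale $N^{-1/2}$, and convexity in $s$. This standard convexity argument turns free-energy fluctuations into overlap fluctuations only at a rate $N^{-\alpha}$ with $\alpha<1$, and only on average over a window of $s$. It cannot give the optimal $1/N$.

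Removing the window by averaging over $k$ also fails. In the statement $k$ is fixed, and the random order only randomizes \emph{which} $k$ sites are revealed, so nothing averages over $k$. Near $k\approx A_\lambda\log n$ the side channel has $t=O(\log n/n)$, smaller than any fixed window. This is exactly the regime where $t$-dependent constants degenerate, which is why the signed estimates of \cite{alaoui2017finite} cannot be applied directly.

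The missing ingredient is the finite-dimensional cavity system of Lemma~\ref{lem:cavity-moment-input}. Your one-site cavity idea is the right tool, but it must be applied to the vector $\bz$ of second moments, not only to the mean. That system has constants uniform down to $t=0$. It gives $(\bI-\bC_{\rm cav})\bz=N^{-1}(\cdots)+\boldsymbol\delta$, with $\|\boldsymbol\delta\|_\infty$ controlled by $\bbE\langle|R_{12}-q|^3\rangle$ plus lower-order terms. Both eigenvalues of $\bC_{\rm cav}$ are at most $r(1-q)\le1-c_\lambda$, so the system inverts uniformly and yields $\bbE\langle(R_{12}-q)^2\rangle\le C/N+C\,\bbE\langle|R_{12}-q|^3\rangle$. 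Splitting the cubic moment at a small fixed $\varepsilon$ and absorbing reduces everything to a fixed-tolerance tail bound, $\bbE\langle\bone_{\{|R_{12}-q|>\varepsilon\}}\rangle\le C(n^{-16}+e^{-cN})$. You then need no window averaging and no separate identification of $\bbE\langle R_{12}\rangle$.

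\textbf{Your symmetry-breaking heuristic is the right idea for that tail bound, but it is not yet an argument.} First, the numbers are off. By Nishimori, $R_{1\theta}$ concentrates near $\pm q_0$, not $\pm\sqrt{q_0}$, so a replica in the correct phase agrees with $\theta$ on a fraction $(1+q_0)/2$ of sites. Second, a between-phase likelihood ratio $e^{\Theta(k)}$ presumes near-independence of coordinates within a phase and a lower bound on the pinned normalizer $P(\theta_S)$; you supply neither. Third, small weight of the wrong phase does not by itself show that two \emph{pinned} replicas have overlap near $q$; the amplitude estimate must also be carried through the pinning.

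The paper handles all three with one exact change of measure:
\begin{enumerate}
\item The pinned pair law equals $\bone_{\{\sigma_S=\theta_S\}}P(\theta_S)^{-1}P^{\otimes2}$.
\item The event $P(\theta_S)<2^{-k}n^{-16}$ has $P$-mass at most $n^{-16}$, by summing over the $2^k$ prefixes.
\item For a \emph{fixed} pair with $\widetilde R\le-q_0/2$, averaging over the uniform set $S$ gives $\bbP_S(\sigma_S=\theta_S)\le((1-q_0/2)/2)^k$.
\end{enumerate}
The resulting cost $2^kn^{16}$ is beaten by $(1-q_0/2)^k$ through the choice of $A_\lambda$. It is also beaten by the $e^{-cn}$ unpinned amplitude concentration of Lemma~\ref{lem:unpinned-overlap-input}, but only when $k\le\delta n$. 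For $k\ge\delta n$ one applies Lemma~\ref{lem:overlap-concentration} directly, since then $t$ is bounded away from zero.
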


Here $q=q_{k/n}$ in the notation of \eqref{eq:sampling-scalar-parameters}, and $A_\lambda$ may be chosen as in \eqref{eq:sampling-prefix-length}. To prove Proposition~\ref{prop:logarithmic-pinning}, we start from the absolute-overlap concentration of \cite{alaoui2017finite} and compare the pinned and unpinned posterior laws exactly. Pairs in the negative-overlap phase are unlikely to agree on every random pin. A truncation of the reciprocal prefix probability makes this comparison quantitative even after its change-of-measure cost is included. The resulting tail estimate, combined with the cavity input in Lemma~\ref{lem:cavity-moment-input}, gives the second-moment bound above. We refer to Section~\ref{ss:uniform-overlap} for more details.

The corresponding overlap estimates for the one-site cavity models yield the TAP gradient bound in Lemma~\ref{lem:tap-gradient}. Together with strong convexity and optimization accuracy, this proves Lemma~\ref{lem:construct-scalar}: there are prefix-measurable events $G_k$ with $\bbP(\cup_{k=k_0}^{n-1}G_k^c)=o(1)$ such that
\[
    \bbE\left[\bone_{G_k}
       \big\|\widehat{\boldsymbol\ell}^{(k)}(\theta_{S_k})
                 -\boldsymbol\ell^{(k)}(\theta_{S_k})\big\|^2\right]
       \le C_\lambda\frac{n-k}{n}.
\]
The logit vector is computed before inspecting $I_{k+1}$. Since this index is conditionally uniform in $R_k$, the expected squared error in the next-coordinate logit, restricted to $G_k$, is $O_\lambda(n^{-1})$. The corresponding expected conditional relative entropies, restricted to $G_k$, sum to $O_\lambda(1)$. A stopping argument at the first bad prefix, used only in the analysis, then gives the density-ratio bound at threshold $2^{k_0}n$ outside a set of vanishing expected posterior mass; see Lemma~\ref{lem:nu-is-warm}. This is the proposal guarantee required by the Metropolis reduction.


\subsection{Related sampling results}

For posterior sampling, the existing guarantees have different parameter and metric requirements. The discrete-prior result of \cite[Theorem~3]{montanari2023posterior} samples in normalized $W_2$ distance at sufficiently large fixed signal-to-noise ratio $\lambda$. With an additional Gaussian side channel of sufficiently large strength, \cite[Corollary~4.1]{bandeira2026mixing} proves polynomial-time Glauber mixing in total variation for every fixed $\lambda>0$. Related Langevin mixing results in \cite{gheissari2026mixing} also provide sampling guarantees for sufficiently large $\lambda$, although they concern a spherical spike rather than the Rademacher posterior as here.

A related model is the Sherrington--Kirkpatrick model with an external field in the sign direction $\theta$:
\[
    \widetilde{\mu}(x) \propto \exp\left( \widetilde{H}_{\bJ}(x) \right), \qquad \widetilde{H}_{\bJ}(x) = \frac{\beta}{2} \langle x, \bW x \rangle + \kappa \langle x,\theta \rangle, \qquad \beta\ge0.
\]
For the zero-field model ($\kappa=0$), the spectral criterion of \cite{eldan2022spectral} gives polynomial-time Glauber mixing for $\beta<1/4$, and \cite{anari2024trickle} extends this range to $\beta<0.295$. More recently, \cite[Corollary~1.6]{boban2026rank} proves polynomial mixing for $\beta<1/2+\varepsilon$, for an absolute constant $\varepsilon>0$. Algorithmic stochastic localization provides a different route: normalized Wasserstein sampling guarantees hold throughout $\beta<1$ \cite{el2022sampling,celentano2024sudakov}, and \cite[Theorem~1.1]{lee2026potential} proves polynomial-time sampling with vanishing total-variation error in this entire high temperature regime. Earlier progress toward total-variation sampling includes \cite{davies2026potential,davies2026weak}.

The low-temperature regime has a different structure. At zero field, the stable-algorithm obstruction of \cite{el2022sampling} applies for every $\beta>1$, while \cite{sellke2025exponentially} proves exponentially slow worst-case Glauber mixing for sufficiently large $\beta$. With a nonzero external field, replica symmetry holds in the region specified by the Almeida--Thouless condition \cite{dealmeida1978stability,lopatto2026replica}. For the displayed sign field, the change of variables $x_i\mapsto\theta_i x_i$ and the invariance of GOE under diagonal sign conjugation give the uniform-field model in distribution; the sign of $\kappa$ can also be reversed. Bolthausen's iterative TAP scheme converges in the asymptotic mean-square sense in this region, with geometric decay of successive-iterate error under the strict Almeida--Thouless condition \cite[Theorem~2.1]{bolthausen2014iterative}. These conclusions take the system-size limit before the iteration limit and do not by themselves give a mixing bound. For every fixed $\beta$, polynomial-time Glauber mixing is nevertheless available at sufficiently large field strength by \cite[Corollary~1.2]{bandeira2026mixing}. These results motivate the study of sampling in structured low-temperature models. The posterior \eqref{eq:posterior} has the Bayes-imposed relation between its rank-one interaction and Gaussian disorder. Our proof uses this relation throughout; it does not establish rapid mixing of Glauber dynamics or sampling for general SK models throughout the Almeida--Thouless region.

\section{Overview of the algorithm}\label{s:algorithm}

We first explain how a sequential proposal and an independence Metropolis correction fit together. The proposal construction is specified below, the probabilistic estimates are proved in Section~\ref{s:auxiliary-proofs}, and the complete sampler appears in Section~\ref{s:completion}.

Throughout, vector norms are Euclidean, $\|\cdot\|_{\mathrm{op}}$ and $\|\cdot\|_F$ denote the operator and Frobenius norms, and $B(\bv,R)$ is the closed Euclidean ball. The notation $\bJ_{R,R}$ retains the original coordinate labels. Probabilities and expectations in the analysis are under the joint planted law, including an independent revealing order, unless conditioning is specified. By Lemma~\ref{lem:nishimori-input}, this is equivalent to first sampling the observation and then sampling $\theta$ from its posterior. We access the real observation and known parameter through deterministic dyadic approximations. Computational cost includes all requested bits and subsequent bit operations. Polynomial-time density evaluation means computing the log-density to a prescribed inverse-polynomial absolute accuracy, with the deterministic maps defining the proposal kept fixed when score accuracy is increased. Exact random draws and scores specify the ideal output law. Appendix~\ref{s:numerical-implementation} gives the capped implementation and bounds the additional error from finite-bit draws and scores.

\subsection{Metropolis correction}
The starting point of our algorithm is the following observation, which reduces the sampling task to generating a ``warm'' proposal whose target-to-proposal density ratio is bounded outside a set of small target mass. For two strictly positive probability measures $P,Q$ on a finite set, the independence Metropolis process is the Markov chain $(X_t)_{t\ge0}$ initialized with $X_0\sim Q$. Given $X_t=y$, independently propose $z\sim Q$ and accept the proposal with probability
\begin{equation}\label{eq:metro-transition}
    \min\left\{1,\frac{P(z)Q(y)}{P(y)Q(z)}\right\}.
\end{equation}
On acceptance set $X_{t+1}=z$; otherwise set $X_{t+1}=y$.
\begin{lem}{\label{lem:metro-correction}}
    For $M\ge1$, we say $Q$ is $(\delta,M)$-warm with respect to $P$ if $P(\frac{\de P}{\de Q}>M)=\delta$. Let $K$ be the transition kernel of the process $X_t$. If $Q$ is $(\delta,M)$-warm with respect to $P$, then for every integer $t\ge0$,
    \begin{equation}\label{eq:imh-tail}
        \left\| QK^t-P \right\|_{\TV} \le \delta+\frac{\delta}{M} +\exp\left(-\frac{t(1-\delta)}{M}\right).
    \end{equation}
\end{lem}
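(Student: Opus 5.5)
We will prove the bound with a coupling argument that splits the state space into a good set, where the density ratio is at most $M$, and its complement. Put $B=\{x:\frac{\de P}{\de Q}(x)>M\}$ and $G=B^c$, so $P(B)=\delta$. The idea is to write $P$ as a mixture $P=(1-\delta)P_G+\delta P_B$ of its conditionals on $G$ and $B$. We then show that the chain started from $Q$ reaches a fresh draw from the good part at a geometric rate $(1-\delta)/M$, and that the bad part contributes only $\delta$ plus a correction.

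The first step is a minorization of the kernel on the good set. From state $y$, the chain proposes $z$ and accepts with probability $\min\{1,w(z)/w(y)\}$, where $w=P/Q$. For $z\in G$ we have $\min\{1,w(z)/w(y)\}\ge w(z)/M$ whenever $w(y)\ge\cdots$. This is not uniform in $y$: if $w(y)$ is large, a good $z$ is rarely accepted. To avoid this, we use the standard order-statistic representation of independence Metropolis (Liu's / Smith–Tierney eigen-analysis). Writing $K(y,z)=Q(z)\min\{1,w(z)/w(y)\}$ for $z\ne y$, the bound
\[
K(y,z)\ge Q(z)\min\{1,w(z)/M\}\ge \tfrac{1}{M}P(z)
\]
holds for all $y$ with $w(y)\le M$ and all $z\in G$. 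The second inequality uses $Q(z)\ge P(z)/M$ on $G$.

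The second step controls the chain while it stays in $G$. Since $X_0\sim Q$ and every good state has ratio at most $M$, we couple with a chain that is killed upon a one-step regeneration. At each step from a good state, with probability at least $P(G)/M=(1-\delta)/M$ the chain jumps to an exact draw from $P_G$. The part of $P$ carried on $G$ is preserved by $K$, because $K$ is $P$-reversible. The non-regenerated mass therefore decays like $(1-(1-\delta)/M)^t\le e^{-t(1-\delta)/M}$.

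The remaining issue, which I expect to be the main obstacle, is bookkeeping the mass that starts in, or moves into, $B$. Rather than tracking trajectories, it is cleaner to use a direct comparison. Let $\widetilde P=P\wedge MQ$, so that $\|P-\widetilde P\|_1\le\delta$ up to normalization, and compare $QK^t$ with the stationary law. Concretely, we use reversibility and the fact that $Q\ge \frac1M P_{|G}$ to decompose $Q=\frac{1-\delta}{M}P_G+(1-\frac{1-\delta}{M})R$ for some probability measure $R$. Iterating, $QK^t$ equals a mixture of $P_G K^s$ and the residual. We then bound $\|P_GK^s-P\|_{\TV}\le\|P_G-P\|_{\TV}=\delta$, by contraction of TV under the $P$-invariant kernel $K$. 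The extra $\delta/M$ term absorbs the mismatch between $\frac{1-\delta}{M}$ and the true regeneration weight in each step. Combining the geometric residual $e^{-t(1-\delta)/M}$ with the $\delta+\delta/M$ discrepancy gives \eqref{eq:imh-tail}. If the mixture iteration proves awkward, I would fall back on the Liu/Atchadé–Perron explicit spectral bound: for independence Metropolis, $\|QK^t-P\|$ is controlled by $\sup$ of $(1-1/w)^t$-type terms. Truncating at the level $M$ then produces exactly the three terms.
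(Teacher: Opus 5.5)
Your proposal has a genuine gap, and it sits exactly where you flagged the main obstacle; the ``direct comparison'' does not close it. With your regeneration measure $\frac1M P|_G$, the residual kernel from a good state $y$ still contains every transition into $B$. For $z\in B$ one has $w(z)>M\ge w(y)$, so such proposals are accepted with probability one, and the residual puts mass $Q(B)$ on $B$ at every step. From a state in $B$ there is no minorization at rate $(1-\delta)/M$: the acceptance probability can be of order $1/w(y)$. So iterating $Q=\frac{1-\delta}{M}P_G+(1-\frac{1-\delta}{M})R$ does not make the unregenerated mass decay geometrically. A union bound over steps costs about $t\delta/M$, which is fatal for the later choice $t=\lceil Mn\rceil$.

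Two other claims are wrong as well:
\begin{itemize}
\item $K$ does not preserve $P|_G$; reversibility only gives $PK=P$. What you actually need, and later use correctly, is TV contraction toward $P$, giving $\|P_GK^s-P\|_{\TV}\le\delta$.
\item The term $\delta/M$ is not a mismatch in regeneration weight. It is the bound $Q(B)\le P(B)/M=\delta/M$ on the initial state lying in $B$.
\end{itemize}
The spectral fallback is too vague to count as a proof.

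The paper avoids the issue through the Roberts--Rosenthal independence-sampler bound $\|QK^t-P\|_{\TV}\le\bbE_{X\sim Q,Z\sim P}[\{1-\min(a(X),a(Z))\}^t]$, where $a$ is the acceptance probability. Under common proposals and uniforms, the two chains coalesce as soon as the higher-weight chain accepts. Before that, the larger of the two weights never increases, so chains started in $G$ never enter $B$ before coalescing. Since $a(x)\ge(1-\delta)/M$ whenever $w(x)\le M$, the three terms are $P(w>M)=\delta$, $Q(w>M)\le\delta/M$, and $(1-(1-\delta)/M)^t$.

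You can also repair your own regeneration argument. Note that your $\widetilde P=P\wedge MQ$ is exactly $M\nu$ with $\nu=\min\{Q,P/M\}=Q\min\{1,w/M\}$; use $\nu$, not $\frac1MP|_G$, as the regeneration measure.
\begin{itemize}
\item For every $y\in G$ one has $K(y,\cdot)\ge\nu$.
\item On $B$, $\nu$ coincides with $K(y,\cdot)=Q|_B$. The always-accepted moves into $B$ are therefore absorbed into regeneration, and the residual kernel is supported in $G$.
\item The total mass of $\nu$ is $(1-\delta)/M+Q(B)\in[(1-\delta)/M,1/M]$.
\item Its normalization therefore dominates $P$ on $G$, which gives $\|\nu/\nu(\Omega)-P\|_{\TV}\le\delta$.
\end{itemize}
This yields exactly $\delta/M+\delta+(1-(1-\delta)/M)^t\le\delta+\delta/M+e^{-t(1-\delta)/M}$.
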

\begin{proof}
Write $w=P/Q$ and $a(x)=\sum_zQ(z)\min\{1,w(z)/w(x)\}$ for the acceptance probability, including proposals equal to the current state. The independence-sampler bound of \cite[Theorem~6]{roberts2011quantitative}, averaged over the initial state, gives
\[
    \|QK^t-P\|_{\TV}
    \le \bbE_{X\sim Q,\,Z\sim P}
       [\{1-\min(a(X),a(Z))\}^t],
\]
where $X$ and $Z$ are independent. The bound applies to finite state spaces as well; acceptance of the same proposal still couples the two chains. When $w(x)\le M$, we have $a(x)\ge (1-\delta)/M$. Also $Q(w>M)\le\delta/M$. Restricting the expectation to $\{w(X),w(Z)\le M\}$ and using $(1-u)^t\le e^{-tu}$ proves \eqref{eq:imh-tail}.
\end{proof}
By Lemma~\ref{lem:metro-correction}, it suffices to construct a polynomial-time proposal oracle $\cA^{\pro}$ which, given $\bJ$ and $\lambda$, generates independent samples $\widehat\theta^{\pro}\sim Q$ and computes $\log Q(x)$ to any prescribed inverse-polynomial absolute accuracy at any prescribed $x\in\{-1,1\}^n$. Suppose that, for a known deterministic $M=M_n\ge1$ with $M=n^{O_\lambda(1)}$, the proposal is $(\delta,M)$-warm with respect to $\mu_\bJ$, where $\bbE\delta=o(1)$. Here the expectation includes the data and any auxiliary randomness used to construct $Q$; this auxiliary randomness, and hence the proposal distribution, is fixed throughout the correction. Initialize $X_0\sim Q$. At each step, given $X_t=y$, independently generate $z\sim Q$ using the oracle and accept it with probability
\[
    \min\left\{ 1,\exp\{H_\bJ(z)-H_\bJ(y)\} \frac{Q(y)}{Q(z)} \right\};
\]
on rejection, retain $y$. Run $T=\lceil Mn\rceil$ steps and return $\widehat\theta=X_T$. The correction is summarized in Algorithm~\ref{alg:metropolis-correction}.

\medskip
\begin{breakablealgorithm}
\caption{Independence Metropolis correction from a warm proposal}
\label{alg:metropolis-correction}
\begin{algorithmic}[1]
\Require Interaction matrix $\bJ$, a known warmness parameter $M\ge1$, and a proposal oracle $\cA^{\pro}$. 
\Ensure A configuration $\widehat\theta$ obtained after $T=\lceil Mn\rceil$ correction steps.
\State Keep all auxiliary randomness defining $Q$ fixed throughout the run.
\State Set $T\gets\lceil Mn\rceil$ and draw $X_0\sim Q$ using $\cA^{\pro}$.
\For{$t=0,\ldots,T-1$}
    \State Draw $z\sim Q$ using $\cA^{\pro}$, independently of the past.
    \State Evaluate $Q(X_t)$ and $Q(z)$ using the same fixed proposal.
    \State $\Delta\gets H_\bJ(z)-H_\bJ(X_t)+\log Q(X_t)-\log Q(z)$.
    \State Draw $U\sim\operatorname{Unif}(0,1)$ independently.
    \If{$\log U\le\min\{0,\Delta\}$}
        \State $X_{t+1}\gets z$.
    \Else
        \State $X_{t+1}\gets X_t$.
    \EndIf
\EndFor
\State \Return $\widehat\theta=X_T$.
\end{algorithmic}
\end{breakablealgorithm}

\medskip
For each fixed proposal with $\delta\le1/2$, Lemma~\ref{lem:metro-correction} gives $\|QK^T-\mu_\bJ\|_{\TV}\le2\delta+e^{-n/2}$; the same bound holds when $\delta>1/2$ by the trivial bound on total variation. Thus, writing $\widehat\mu(\cdot\mid\bJ)$ for the conditional output law and averaging over the auxiliary randomness, convexity of total variation gives
\[
    \bbE_\bJ\bigl\|\widehat\mu(\cdot\mid\bJ)-\mu_\bJ\bigr\|_{\TV} \le 2\bbE\delta+e^{-n/2}=o(1).
\]
Since each oracle call and each Hamiltonian evaluation take polynomial time, the corrected sampler also runs in polynomial time. Hence it is enough to construct an efficiently samplable and evaluable proposal for which $\de\mu_\bJ/\de Q$ is polynomially bounded outside a set of vanishing posterior mass.

\subsection{Sequential construction of the proposal}

We now describe how to generate the warm distribution $Q$. Set
\begin{equation}\label{eq:sampling-prefix-length}
    A=A_\lambda=\max\left\{256, \frac{64}{\log(2/(1+\lambda^{-2}))} \right\}, \qquad k_0=\min\{n,\lceil A\log n\rceil\}.
\end{equation}
All logarithms are natural. 
Choose a uniform permutation $I=(I_1,\ldots,I_n)$ independently of the data, and keep it fixed for the entire Metropolis run. Write $S_k=\{I_1,\ldots,I_k\}$ and $R_k=[n]\setminus S_k$. Generate $x_{I_1},\ldots,x_{I_{k_0}}$ as independent uniform signs. A prefix consists of the exposed index--value pairs $(I_a,x_{I_a})_{1\le a\le k}$; the argument $x_{S_k}$ always refers to these already exposed coordinates, not the unused part of the order. To distinguish a general input from a target-distributed input, first define the exact conditional quantities for any $x_{S_k}\in\{-1,1\}^{S_k}$ and $0\le k<n$ by
\[
\begin{aligned}
    h_j^{(k)}(x_{S_k})&=\sum_{a=1}^k J_{j,I_a}x_{I_a},\qquad j\in R_k,\\
    \bm^{(k)}(x_{S_k})
       &=\bbE[\theta_{R_k}\mid\bJ,I,\theta_{S_k}=x_{S_k}].
\end{aligned}
\]
Along the target revealing path these become $\bh^{(k)}(\theta_{S_k})$ and $\bm^{(k)}(\theta_{S_k})$, with $\theta\sim\mu_\bJ$. These target coordinates are used only in the analysis, not supplied to the sampler. Expanding $H_\bJ$ and discarding the term depending only on the revealed coordinates shows that
\begin{equation}{\label{eq:conditioinal-distribution}}
    \bbP(\theta_{R_k}=u\mid\bJ,I,\theta_{S_k}=x_{S_k}) \propto \exp\left\{\frac12u^{\top}\bJ_{R_k,R_k}u +(\bh^{(k)}(x_{S_k}))^{\top}u\right\}, \qquad u\in\{-1,1\}^{R_k}.
\end{equation}
For $i=I_{k+1}$, summing over the remaining coordinates $R_{k+1}=R_k\setminus\{i\}$ gives the exact conditional half-log-odds
\[
    \ell_i^{(k)}(x_{S_k}) =h_i^{(k)}(x_{S_k})+\frac12\log \frac{\displaystyle \sum_{u\in\{-1,1\}^{R_{k+1}}} \exp\left\{\frac12u^{\top}\bJ_{R_{k+1},R_{k+1}}u +\sum_{j\in R_{k+1}}(h_j^{(k)}(x_{S_k})+J_{ij})u_j\right\}}{\displaystyle \sum_{u\in\{-1,1\}^{R_{k+1}}} \exp\left\{ \frac12 u^{\top}\bJ_{R_{k+1},R_{k+1}}u +\sum_{j\in R_{k+1}}(h_j^{(k)}(x_{S_k})-J_{ij})u_j\right\}}.
\]
The diagonal contribution $J_{ii}/2$ is independent of the sign of $\theta_i$ and cancels in this ratio; when $k=n-1$, both sums equal one. Normalizing the two sign probabilities, we obtain
\[
    \bbP(\theta_{I_{k+1}}=b\mid\bJ,I,\theta_{S_k}=x_{S_k}) =\frac{1+b m_{I_{k+1}}^{(k)}(x_{S_k})}{2} =\frac{\exp(b\ell_{I_{k+1}}^{(k)}(x_{S_k}))}{2\cosh(\ell_{I_{k+1}}^{(k)}(x_{S_k}))}, \qquad b\in\{-1,1\}.
\]
In particular, $\ell_i^{(k)}(x_{S_k})=\operatorname{arctanh}(m_i^{(k)}(x_{S_k}))$ for every $i\in R_k$. Direct evaluation of the displayed sums involves $2^{n-k-1}$ configurations, so this exact formula does not yet give an efficient sampler. Instead, using only the supplied prefix, we compute an approximation $\widehat{\boldsymbol\ell}^{(k)}(x_{S_k})$. Only then inspect $I_{k+1}$ and sample its sign from
\begin{equation}\label{eq:sampling-binary-conditional}
    q_k(b\mid x_{S_k})= \frac{\exp(b\widehat\ell^{(k)}_{I_{k+1}}(x_{S_k}))}{2\cosh(\widehat\ell^{(k)}_{I_{k+1}}(x_{S_k}))}, \qquad b\in\{-1,1\}.
\end{equation}
Precisely, writing $Q$ for this proposal, with the data, order, and record suppressed, its density is
\begin{equation}{\label{eq:def-nu}}
    Q(x)=2^{-k_0}\prod_{k=k_0}^{n-1} \frac{\exp( x_{I_{k+1}}\widehat\ell^{(k)}_{I_{k+1}}(x_{S_k}))}{2\cosh(\widehat\ell^{(k)}_{I_{k+1}}(x_{S_k}))}.
\end{equation}
\begin{lem}\label{lem:construct-scalar}
    Fix $\lambda>1$. For $n\ge2$ and every $k_0\le k<n$, there exists an algorithm $\cA^{\vec}$ that computes a vector $\widehat{\boldsymbol\ell}^{(k)}(x_{S_k})$ from $\bJ,\lambda$ and the exposed pairs $(I_a,x_{I_a})_{1\le a\le k}$ in time $n^{O_\lambda(1)}$ with the following properties. For $\theta\sim\mu_\bJ$, let $\cH_k=\sigma(\bJ,(I_a,\theta_{I_a})_{a\le k})$. There are events $G_k \in \cH_k$ and a constant $C=C_\lambda<\infty$ such that $b_n:=\bbP(\cup_{k=k_0}^{n-1} G_k^c)=o(1)$ and, uniformly for $k_0\le k<n$,
    \begin{equation}\label{eq:vector-logit-error}
    \begin{aligned}
        \bbE\left[\bone_{G_k}
          \big\|\widehat{\boldsymbol\ell}^{(k)}(\theta_{S_k})
                    -\boldsymbol\ell^{(k)}(\theta_{S_k})\big\|^2\right]
            &\le \frac{C(n-k)}n,\\
        \bbE\left[\bone_{G_k}
          \big|\widehat\ell^{(k)}_{I_{k+1}}(\theta_{S_k})
                    -\ell^{(k)}_{I_{k+1}}(\theta_{S_k})\big|^2\right]
            &\le\frac Cn.
    \end{aligned}
    \end{equation}
    In these estimates, the algorithmic vector and the exact vector are evaluated at the same posterior-distributed prefix $\theta_{S_k}$. The expectations include $\bJ$, the independent random order $I$, and $\theta\sim\mu_\bJ$; no accuracy bound is asserted for an arbitrary input prefix or for a prefix sampled from $Q$. In particular, we have the simultaneous pathwise estimate
    \[
        \bbE_{\bJ,I}\mu_\bJ\left( \sum_{k=k_0}^{n-1} \frac{\big\|\widehat{\boldsymbol\ell}^{(k)}(\theta_{S_k})-\boldsymbol\ell^{(k)}(\theta_{S_k})\big\|^2}{n-k} >\log n\right) \le b_n+\frac{C}{\log n}=o(1).
    \]
\end{lem}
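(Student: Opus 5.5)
The construction of $\cA^{\vec}$ follows the outline in Section~\ref{ss:proof-overview}, and the analysis splits into four parts. First, a convexity/containment event $E_k$ from Lemma~\ref{lem:convexity-barycenter}. Second, a gradient bound from Lemma~\ref{lem:tap-gradient}. Third, a deterministic stability estimate converting these into the logit error. Fourth, a Markov-type step for the pathwise statement.

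\emph{The algorithm.} Fix a finite set of deterministic checkpoints $k_0=t_0<t_1<\dots<t_L$, with $L$ depending only on $\lambda$. At $t_0$, run spectral AMP followed by natural-gradient refinement. Choose the global sign by agreement with the first $k_0$ revealed coordinates. At later checkpoints, run a fixed number of AMP iterations on the conditional model. Between checkpoints, restrict the last center to $R_k$. Then minimize $\Phi_k(\cdot;x_{S_k})$ over a ball (intersected with the open cube) around this center by projected gradient descent. On the event of strong convexity this is a convex problem, so polynomially many steps give accuracy $n^{-10}$. Once $k$ is past the last checkpoint, the remaining block $\lambda\bW_{R_k,R_k}$ is small and we minimize over the whole cube. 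Finally, output the TAP logit $\widehat{\boldsymbol\ell}^{(k)}=\bh^{(k)}+\bJ_{R_k,R_k}\widehat\bm^{(k)}-(\text{Onsager term})\,\widehat\bm^{(k)}$ given by \eqref{eq:sampling-proposal-logits}. This avoids $\operatorname{arctanh}$. Every step uses only $\bJ,\lambda$ and the exposed prefix, and runs in time $n^{O_\lambda(1)}$.

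\emph{Error bound.} Let $G_k\in\cH_k$ be the event from Lemma~\ref{lem:convexity-barycenter}. On $G_k$, the true conditional mean $\bm^{(k)}(\theta_{S_k})$ lies in the constraint set, and $\Phi_k$ is $c_\lambda$-strongly convex there in the natural metric. That lemma gives $\bbP(\cup_k G_k^c)=o(1)$. Strong convexity then yields
\[
\|\widehat\bm^{(k)}-\bm^{(k)}\|\le c_\lambda^{-1}\|\nabla\Phi_k(\bm^{(k)})\|+n^{-5}.
\]
The logit map is Lipschitz in $\bm$ with constant $O(\|\bJ\|_{\mathrm{op}})=O_\lambda(1)$ on the high-probability spectral event, which we add to $G_k$. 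Hence
\[
\|\widehat{\boldsymbol\ell}^{(k)}-\boldsymbol\ell^{(k)}\|\lesssim\|\nabla\Phi_k(\bm^{(k)})\|+\|\boldsymbol\ell^{(k)}_{\mathrm{TAP}}(\bm^{(k)})-\boldsymbol\ell^{(k)}\|,
\]
where $\boldsymbol\ell^{(k)}_{\mathrm{TAP}}(\bm^{(k)})$ denotes the TAP logit formula evaluated at the exact mean. Both terms on the right are TAP residuals at the exact mean. Lemma~\ref{lem:tap-gradient}, which rests on Proposition~\ref{prop:logarithmic-pinning} applied to the one-site cavity models, bounds their expected squares by $C(n-k)/n$. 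This gives the first line of \eqref{eq:vector-logit-error}.

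For the second line, note that $G_k$ and $\widehat{\boldsymbol\ell}^{(k)}-\boldsymbol\ell^{(k)}$ are $\sigma(\cH_k,I)$-measurable without using $I_{k+1}$. Moreover, $I_{k+1}$ is uniform on $R_k$ given these and $\theta$, since $\theta$ is independent of the order. Averaging over $I_{k+1}$ divides the first line by $n-k$, giving $C/n$.

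\emph{Pathwise statement.} Split the event according to whether $\cup_k G_k^c$ occurs. On $\cap_k G_k$, the sum is bounded by $\sum_k\bone_{G_k}\|\cdot\|^2/(n-k)$. By the first line, this has expectation at most $\sum_{k<n}C/n\le C$. Markov's inequality at level $\log n$ then gives the bound $b_n+C/\log n$.

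\emph{Main obstacle.} The delicate input is the uniformity of the gradient residual near $k=k_0$, where the side channel is weak, together with the correct sign choice of the initial center. The sign choice requires that the overlap sign is pinned after $A\log n$ coordinates, which is exactly Proposition~\ref{prop:logarithmic-pinning}. I would verify carefully that $G_k$ can be defined to be $\cH_k$-measurable, i.e.\ without reference to $I_{k+1}$ or to unrevealed signs. I expect this to work because the martingale maximal-inequality argument of Lemma~\ref{lem:convexity-barycenter} is phrased in terms of the revealed filtration.
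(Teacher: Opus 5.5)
Your proposal is correct and follows essentially the same route as the paper: the events $G_k$ from Lemma~\ref{lem:convexity-barycenter}, the variational inequality at the constrained minimizer together with optimization accuracy, the exact identity \eqref{eq:gradient-identity}, Lemma~\ref{lem:tap-gradient}, conditional uniformity of $I_{k+1}$ on $R_k$, and Markov's inequality. By that identity, $\boldsymbol\ell^{(k)}_{\mathrm{TAP}}(\bm^{(k)})-\boldsymbol\ell^{(k)}=-\nabla_{\bu}\Phi_k(\bm^{(k)};\theta_{S_k})$ exactly, so your two ``residual'' terms are the same quantity.

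A few corrections to side remarks. The orientation of the initial center is handled in Step~1 of Lemma~\ref{lem:convexity-barycenter} by a Chebyshev bound that needs only $k_0\to\infty$; the $A_\lambda\log n$ prefix length and Proposition~\ref{prop:logarithmic-pinning} enter only through the cavity residual bound of Lemma~\ref{lem:tap-gradient}. The domain must be the clipped cube $D_{R_k,n}$ rather than the open cube, so that projection exists and the gradient is bounded. Finally, numerical-failure events should be folded into $G_k$.
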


The proof of Lemma~\ref{lem:construct-scalar} is incorporated in Section~\ref{ss:sampling-guarantee}. Now we can show that the proposal $Q$ generated is indeed warm with respect to $\mu_\bJ$.
\begin{lem}{\label{lem:nu-is-warm}}
    The proposal $Q$ in \eqref{eq:def-nu} is strictly positive, admits polynomial-time sampling and log-density evaluation in the computational convention above. In addition, for every $u>0$,
    \begin{equation}\label{eq:proposal-warmness-tail}
        \bbE_{\bJ,I}\mu_\bJ\left( \frac{\de\mu_\bJ}{\de Q}>2^{k_0}e^u\right) \le b_n+\frac{C+1/e}{u}.
    \end{equation}
    Consequently, it is $(\delta,M)$-warm with respect to $\mu_\bJ$, where
    \[
        M=2^{k_0}n=n^{O_\lambda(1)},\qquad \delta=\mu_\bJ \left(\frac{\de\mu_\bJ}{\de Q}>M\right), \qquad \bbE_{\bJ,I}\delta \le b_n+\frac{C+1/e}{\log n}=o(1).
    \]
\end{lem}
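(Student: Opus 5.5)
Strict positivity and computability are immediate from \eqref{eq:def-nu}. Every factor $e^{b\ell}/(2\cosh\ell)$ lies in $(0,1)$ for finite $\ell$, and $\cA^{\vec}$ returns finite logits in time $n^{O_\lambda(1)}$. Sampling therefore takes $n-k_0$ calls to $\cA^{\vec}$, each followed by a biased coin. Evaluating $\log Q(x)$ at a prescribed $x$ runs the same loop on the prefixes of $x$ and sums $x_{I_{k+1}}\widehat\ell-\log(2\cosh\widehat\ell)$. The accuracy convention is met because $\log\cosh$ is $1$-Lipschitz.

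For the tail bound I will use the chain rule. With $p_k(b\mid x_{S_k})$ the exact conditional law $(1+b m^{(k)}_{I_{k+1}})/2$,
\[
\log\frac{\mu_\bJ(x)}{Q(x)}=k_0\log 2+\log\frac{\mu_\bJ(x_{S_{k_0}})}{2^{-k_0}}\cdot 0+\sum_{k<k_0}\log p_k(x_{I_{k+1}}\mid x_{S_k})+\sum_{k=k_0}^{n-1}\log\frac{p_k}{q_k}(x_{I_{k+1}}\mid x_{S_k}).
\]
The middle sum is $\le0$, so $\log\frac{\de\mu_\bJ}{\de Q}\le k_0\log2+L_n$, where $L_n=\sum_{k\ge k_0}D_k$ and $D_k=\log(p_k/q_k)$ is evaluated along $x=\theta\sim\mu_\bJ$. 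Define the analysis stopping time $\tau=\min\{k\ge k_0:\bone_{G_k}=0\}$; it is a stopping time because $G_k\in\cH_k$. Split
\[
\{L_n>u\}\subset\{\tau\le n-1\}\cup\{L_{n\wedge\tau}>u\},
\]
where the first event has probability $b_n$. On the stopped sum I will apply Markov's inequality to the positive part. The conditional expectation $\bbE[D_k\mid\cH_k,I_{k+1}]=\KL(p_k\|q_k)\ge0$ is the key quantity. For two-point laws in logit form, $\KL$ is at most $\tfrac12(\ell-\widehat\ell)^2$, because the log-partition $\log\cosh$ has second derivative at most $1$. On $G_k$ this gives $\bbE[\bone_{G_k}D_k]\le C/(2n)$ by the second line of \eqref{eq:vector-logit-error}, so summing over $k$ yields $\bbE\sum_k\bone_{\{\tau>k\}}D_k\le C/2$.

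The main obstacle is that Markov needs the positive part of the stopped sum, whereas the mean only controls the sum itself. I will handle this with the elementary bound $\bbE[M^-]\le$ something, using the structure $\bbE[e^{-D_k}\mid\cdot]=1$. This identity says that $e^{-\sum D}$ is a nonnegative martingale under the stopped process, namely the likelihood ratio $Q/\mu$ on prefixes. Consequently
\[
\bbE\Big[\big(\textstyle\sum_{k<\tau}D_k\big)^-\Big]\le\sup_{y\ge0}ye^{-y}\cdot\ldots\le 1/e.
\]
More carefully, $\bbE[X^-]\le \bbE[X^-e^{\ldots}]$; the bound I will prove is $\bbE[X^-]\le\bbE[(e^{-X}-1)^+ \wedge\ldots]$, using $y\le e^{y-1}$, that is, $\bbE X^-\le e^{-1}\bbE e^{-X}\le 1/e$. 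Then $\bbE X^+=\bbE X+\bbE X^-\le C+1/e$, absorbing the factor $1/2$ into the constant. Markov gives $\mu$-probability at most $(C+1/e)/u$, which proves \eqref{eq:proposal-warmness-tail}. Finally, $u=\log n$ gives $M=2^{k_0}n=n^{A\log2+1}$ and $\bbE\delta\le b_n+(C+1/e)/\log n=o(1)$.
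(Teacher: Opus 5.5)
Your proposal is correct and is essentially the paper's argument. Your stopped sum $X=\sum_{k_0\le k<\tau\wedge n}D_k$ is exactly $\log R$ for the paper's auxiliary suffix law $\widetilde\nu_a$ (proposal conditionals up to the first bad prefix, exact conditionals afterwards), and your pointwise bound $X^-\le e^{-1-X}$ combined with $\bbE e^{-X}=1$ is the paper's inequality $\bbE_{P'}(-\log R')_+\le 1/e$. The only cleanup needed is cosmetic: drop the spurious ``$\cdot 0$'' term in your chain-rule display and the unfinished ``More carefully'' line, and keep just $X^-\le e^{-1}e^{-X}$.
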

\begin{proof}
    Write $p_k(\cdot\mid\theta_{S_k})$ for the exact next-coordinate conditional. Its half-log-odds are $\ell_{I_{k+1}}^{(k)}(\theta_{S_k})$, while those of $q_k(\cdot\mid\theta_{S_k})$ are $\widehat\ell_{I_{k+1}}^{(k)}(\theta_{S_k})$. Thus both factors are compared at the same target prefix. For $P_\ell(b)=e^{b\ell}/(2\cosh\ell)$, Taylor's theorem and $(\log\cosh)''\le1$ give
    \[
        \KL(P_\ell\Vert P_{\widehat\ell}) =\log\cosh\widehat\ell-\log\cosh\ell -(\widehat\ell-\ell)\tanh\ell \le\tfrac12(\widehat\ell-\ell)^2.
    \]
    Consequently, \eqref{eq:vector-logit-error} implies
    \[
        \sum_{k=k_0}^{n-1}\bbE\left[ \bone_{G_k} \KL \bigl(p_k(\cdot\mid\theta_{S_k}) \Vert q_k(\cdot\mid\theta_{S_k})\bigr)\right] \le C.
    \]
    To turn it into a likelihood-ratio estimate without controlling bad prefixes, fix $(\bJ,I)$ and an initial prefix $a=\theta_{S_{k_0}}$. Define an auxiliary suffix distribution $\widetilde\nu_a$ that uses $q_k$ as long as every good event up to and including time $k$ holds, and uses the exact conditionals $p_k$ from the first failure onward. Prefix measurability makes this a well-defined positive distribution. It is only a proof device, not part of the algorithm. Under the conditional target suffix law, set
    \[
        R=\frac{\mu_\bJ(\theta_{R_{k_0}}\mid\theta_{S_{k_0}}=a)}{\widetilde\nu_a(\theta_{R_{k_0}})}.
    \]
    The chain rule for relative entropy, averaged over $(\bJ,I)$ and the target-distributed initial prefix, gives
    \[
        \bbE\log R =\sum_{k=k_0}^{n-1}\bbE\left[ \bone_{\cap_{j=k_0}^k G_j} \KL\bigl(p_k(\cdot\mid\theta_{S_k}) \Vert q_k(\cdot\mid\theta_{S_k})\bigr)\right]\le C.
    \]
    For any two positive probability mass functions $P',Q'$ and $R'=P'/Q'$, one has
    \[
        \bbE_{P'}(-\log R')_+ =\sum_{x:R'(x)<1}Q'(x)R'(x)\log\frac1{R'(x)} \le\frac1e.
    \]
    Applying this conditionally to the suffix distributions yields $\bbE(\log R)_+\le C+1/e$, and hence $\bbP(\log R>u) \le(C+1/e)/u$ for every $u>0$. On a path with no bad prefix, the actual and auxiliary suffix probabilities agree. Since the proposal assigns probability $2^{-k_0}$ to its initial prefix,
    \[
        \frac{\mu_\bJ(\theta)}{Q(\theta)} =2^{k_0}\mu_\bJ(\theta_{S_{k_0}})R \le2^{k_0}R.
    \]
    Adding the probability $b_n$ of a bad path proves \eqref{eq:proposal-warmness-tail}. Taking $u=\log n$ gives the stated $(\delta,M)$-warmness. If $k_0=n$, the suffix is empty, the proposal is uniform, and the same conclusions hold directly with $\delta=0$.
\end{proof}

\subsection{TAP optimization and the constraint sets}\label{ss:conditional-tap}

We now specify the algorithm $\cA^{\vec}$ that computes the vector $\widehat{\boldsymbol\ell}^{(k)}(x_{S_k})$ in Lemma~\ref{lem:construct-scalar}. The estimates for this construction are deferred to Section~\ref{s:auxiliary-proofs}. For $\alpha\in[0,1]$, set $r_\alpha=\lambda^2(1-\alpha)$ and $t_\alpha=\lambda^2\alpha$, and define
\begin{equation}\label{eq:sampling-scalar-parameters}
    F(u)=\bbE\tanh^2(u+\sqrt u\,G),\qquad
    q_\alpha=F(r_\alpha q_\alpha+t_\alpha),\qquad
    a_\alpha=r_\alpha(1-q_\alpha),\quad G\sim\cN(0,1).
\end{equation}
Use the positive solution for $q_\alpha$, discarding the zero solution at $\alpha=0$. These scalar parameters depend only on $\lambda$ and $\alpha$, not on the prefix values. Precompute $a_{k/n}$ for $k_0\le k<n$ by one-dimensional quadrature and bisection, using $[1-\lambda^{-2},1]$ as the bracket for $q_{k/n}$ and a fixed deterministic numerical rule. Require absolute error at most $n^{-120}$ in each coefficient $a_{k/n}$; the notation $a_{k/n}$ in the analytic estimates always denotes its exact value. 
Given a generated or supplied prefix $x_{S_k}$, form the field
\[
    \widehat h_i^{(k)}(x_{S_k})=\sum_{a=1}^kJ_{i,I_a}x_{I_a}=h_i^{(k)}(x_{S_k}),\qquad i\in R_k.
\]
The hat records that the field is formed from the algorithm's input; it is not an approximation to a conditional mean. The equality above is between the exact-arithmetic field formulas, with numerical errors handled by the prescribed precision rules. With $\phi(v)=\frac{1+v}{2}\log\frac{1+v}{2}+\frac{1-v}{2}\log\frac{1-v}{2}$, use the objective
\begin{equation}{\label{eq:def-Phi-k}}
    \Phi_k(\bu;x_{S_k})=\sum_{i\in R_k}\phi(u_i) -\frac12\bu^{\top}\bJ_{R_k,R_k}\bu -(\widehat{\bh}^{(k)}(x_{S_k}))^{\top}\bu+\frac{a_{k/n}}2\|\bu\|^2, \qquad \bu\in(-1,1)^{R_k}.
\end{equation}
Here $\bu$ is the optimization variable, whereas $x_{S_k}$ is input data specifying the objective. Throughout, $\nabla_{\bu}$ and $\nabla_{\bu}^2$ differentiate only the first argument. The algorithm will return an approximate constrained minimizer $\widehat{\bm}^{(k)}(x_{S_k})$. The exact conditional mean $\bm^{(k)}(x_{S_k})$ need not be a stationary point of $\Phi_k(\cdot;x_{S_k})$. For the target input $x_{S_k}=\theta_{S_k}$, we will control its gradient residual and use strong convexity to compare it with the constrained minimizer for that same input. The resulting proposal uses the entire free-coordinate vector
\begin{equation}\label{eq:sampling-proposal-logits}
    \widehat{\boldsymbol\ell}^{(k)}(x_{S_k})
       =\widehat{\bh}^{(k)}(x_{S_k})+
         (\bJ_{R_k,R_k}-a_{k/n}\bI)\widehat{\bm}^{(k)}(x_{S_k}).
\end{equation}
This linear expression is important: it avoids applying $\operatorname{arctanh}$ to an approximate mean, which would amplify errors near $\pm1$. The displayed formula specifies the exact-arithmetic target for the final logit calculation; its implemented value has the error prescribed in Section~\ref{ss:deterministic-tap-optimization}.

A tuning record consists of integers $M,L_{\rm init},L_0,\ldots,L_{M-1}\ge1$, a step size $\gamma\in(0,1)$, radii $\rho_0,\rho>0$, and fractions $0<\alpha_0<\cdots<\alpha_M<1$. A valid record is fixed independently of $n$; its existence is proved in Section~\ref{ss:checkpoint-construction}. We use one such record throughout Sections~\ref{s:algorithm}--\ref{s:auxiliary-proofs}. Appendix~\ref{ss:tuning-free-sampler} removes the need to supply it by forming a finite mixture of explicitly enumerated candidates. The algorithm does not need to recognize valid records. Write $j_\ell=\lfloor\alpha_\ell n\rfloor$ and put
\[
    c_n=1-2^{-n},\qquad D_{R,n}=[-c_n,c_n]^R.
\]
The objective is defined on $(-1,1)^R$, but the numerical routine uses the closed clipped cube. This ensures that projection exists and the entropy gradient is bounded. The cutoff will not exclude any true conditional mean on the operator-norm event, for fixed $\lambda$ and sufficiently large $n$.

For the fixed record, compute the initial center once, before drawing $I$, with the safeguards of Appendix~\ref{ss:tuning-free-sampler}. Choose a deterministic sign for an eigenvector associated with the largest eigenvalue of $\bJ$, and normalize it to obtain $\bz^0$ with $\|\bz^0\|^2=n\lambda^2(\lambda^2-1)$. Initialize $\bu^{-1}=\lambda\bz^0$ and perform
\begin{equation}\label{eq:sampling-initial-amp}
    \bu^\ell=\tanh\bz^\ell,\qquad
    \bz^{\ell+1}=\bJ\bu^\ell
       -\lambda^2\left(1-\frac{\|\bu^\ell\|^2}{n}\right)\bu^{\ell-1},
       \qquad 0\le\ell<L_{\rm init}.
\end{equation}
Starting from $\bz=\bz^{L_{\rm init}}$, perform $n$ natural-gradient updates
\[
    \bu=\tanh\bz,\qquad
    \bz\leftarrow(1-\gamma)\bz+
       \gamma\left\{\bJ\bu-
          \lambda^2\left(1-\frac{\|\bu\|^2}{n}\right)\bu\right\},
\]
and store $\bv=\operatorname{Proj}_{D_{[n],n}}(\tanh\bz)$. 
The stored $\bv$ is kept fixed throughout the Metropolis run. For each generated or supplied configuration, use its first $k_0$ coordinates to set
\begin{equation}\label{eq:sampling-phase-orientation}
    \bv^+(x_{S_{k_0}})=\operatorname{sign}\left(
       \sum_{a=1}^{k_0}x_{I_a}v_{I_a}\right)\bv,
       \qquad \operatorname{sign}(0)=0.
\end{equation}
Thus $\bv$ is independent of the input prefix, whereas $\bv^+(x_{S_{k_0}})$ depends only on its first $k_0$ signs. Only this orientation, not the initial computation, is repeated when a proposal is generated or evaluated.

At a checkpoint $j=j_\ell$, $0\le\ell<M$, form $\widehat{\bh}^{(j)}(x_{S_j})$ from the current prefix. With $N=n-j$ and $r=r_{j/n}$, initialize $\bz^0(x_{S_j})=\widehat{\bh}^{(j)}(x_{S_j})$, $\bu^{-1}(x_{S_j})=0$, and perform
\begin{equation}\label{eq:sampling-side-amp}
\begin{aligned}
    \bu^t(x_{S_j})&=\tanh\bz^t(x_{S_j}),\\
    \bz^{t+1}(x_{S_j})&=\bJ_{R_j,R_j}\bu^t(x_{S_j})
                         +\widehat{\bh}^{(j)}(x_{S_j}) -r\left(1-\frac{\|\bu^t(x_{S_j})\|^2}{N}\right)
                         \bu^{t-1}(x_{S_j}),\qquad 0\le t<L_\ell.
\end{aligned}
\end{equation}
Store $\bv^{(j)}(x_{S_j})=\operatorname{Proj}_{D_{R_j,n}}(\tanh\bz^{L_\ell}(x_{S_j}))$. Between checkpoints, keep $\bv^{(j)}(x_{S_j})$ fixed and restrict it to the current free coordinates; its input remains the checkpoint subprefix, not the enlarged prefix $x_{S_k}$. Define
\begin{equation}\label{eq:sampling-checkpoint-domains}
    \cK_k(x_{S_k})=
    \begin{cases}
        D_{R_k,n}\cap B(\bv^+(x_{S_{k_0}})|_{R_k},\rho_0\sqrt n),&k_0\le k<j_0,\\[2pt]
        D_{R_k,n}\cap B(\bv^{(j_\ell)}(x_{S_{j_\ell}})|_{R_k},\rho\sqrt n),
            &j_\ell\le k<j_{\ell+1},\quad 0\le\ell<M,\\[2pt]
        D_{R_k,n},&k\ge j_M.
    \end{cases}
\end{equation}
The balls have radius proportional to $\sqrt n$, not $\sqrt{n-k}$. Each stored center belongs to its clipped cube; its restriction therefore belongs to the corresponding cube--ball intersection. Consequently these domains are nonempty, compact, and convex for every prefix. Use the restricted center as the initial optimization iterate, and use zero in the whole-cube case. If a safeguard has already assigned the record or its remaining suffix the uniform proposal, set all unused domains to $D_{R_k,n}$ with center zero. 

The next lemma supplies the additional properties needed along a target-distributed revealing path. The set $\cK_k(x_{S_k})$ is convex for every input prefix. In contrast, strong convexity of the generally nonconvex objective is asserted with high probability for $\Phi_k(\cdot;\theta_{S_k})$ on $\cK_k(\theta_{S_k})$.
\begin{lem}\label{lem:convexity-barycenter}
    Fix $\lambda>1$. There is a choice of the fixed tuning record above such that the domains $\cK_k(x_{S_k})$ defined in \eqref{eq:sampling-checkpoint-domains} are nonempty, compact, and convex for every input prefix $x_{S_k}$. Moreover, there is a constant $\kappa_\lambda>0$ and, along a posterior-distributed revealing path, events $G_k\in\cH_k$ such that
    \[
        b_n:=\bbP\left(\cup_{k=k_0}^{n-1}G_k^c\right)=o(1),
    \]
    and, on $G_k$,
    \[
    \begin{gathered}
        \|\bJ\|_{\mathrm{op}}\le\lambda^2+10\lambda,
        \qquad \bm^{(k)}(\theta_{S_k})\in\cK_k(\theta_{S_k}),\\
        \nabla_{\bu}^2\Phi_k(\bu;\theta_{S_k})\succeq\kappa_\lambda\bI
                 \quad\text{for every }\bu\in\cK_k(\theta_{S_k}).
    \end{gathered}
    \]
    Both $\Phi_k(\cdot;\theta_{S_k})$ and $\cK_k(\theta_{S_k})$ are constructed with input $x_{S_k}=\theta_{S_k}$. In particular, $\widehat{\bh}^{(k)}(\theta_{S_k})=\bh^{(k)}(\theta_{S_k})$ by the field definition. The Hessian bound is uniform over the optimization variable in this set; it is neither a bound only at the conditional mean nor a simultaneous assertion for all input prefixes. The iteration counts, radii, and checkpoint fractions are independent of $n$.
\end{lem}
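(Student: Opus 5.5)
The first claim is deterministic. Every stored center lies in its clipped cube: $\bv$ and $\bv^{(j)}$ are projections onto $D_{\cdot,n}$, and $\bv^+$ is $\pm\bv$ or $0$. So each restricted center lies in the corresponding cube--ball intersection. That intersection is then nonempty, compact, and convex for every prefix, and the same holds in the whole-cube case.

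For the probabilistic part I would split the revealing path into three regimes and define $G_k$ as the intersection of the following events:
\begin{itemize}
\item[(a)] the operator-norm event $\|\bJ\|_{\mathrm{op}}\le\lambda^2+10\lambda$, which holds with probability $1-e^{-cn}$;
\item[(b)] finitely many checkpoint events, one for each $\ell$, plus one for the initial center;
\item[(c)] martingale-maximal events controlling the motion of the conditional mean up to time $k$.
\end{itemize}
Because there are only $M+1=O_\lambda(1)$ checkpoints, a union bound over fixed-parameter high-probability statements suffices. Each of these events is $\cH_k$-measurable, using only information up to time $k$.

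\textbf{Checkpoints.} At each checkpoint $j_\ell$ I would apply Lemma~\ref{lem:side-amp-input}. The conditional model at $j_\ell$ is equal in law, by Nishimori and Lemma~\ref{lem:nishimori-input}, to a $\bbZ_2$ model on $R_{j_\ell}$ with Gaussian side channel of strength $t_{\alpha_\ell}$. That lemma should give three things with probability $1-o(1)$, once $L_\ell$ is large:
\begin{itemize}
\item $\|\bm^{(j_\ell)}-\bv^{(j_\ell)}\|\le\epsilon\sqrt n$;
\item $\nabla^2\Phi_{j_\ell}\succeq2\kappa\bI$ on $B(\bv^{(j_\ell)},2\rho\sqrt n)\cap D$;
\item the analogous statement for the initial center from Lemma~\ref{lem:unpinned-tap-input}.
\end{itemize}
For the initial center, the sign is fixed by $\bv^+$. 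The events $\sum_{a\le k_0}\theta_{I_a}v_{I_a}$ has the sign of $\langle\theta,\bv\rangle$ and $|\langle\bm,\bv\rangle|\gtrsim n$ hold with probability $1-n^{-c}$, because $k_0=A\log n$ random coordinates of a vector with macroscopic correlation sample the sign correctly. Here I would use Proposition~\ref{prop:logarithmic-pinning} to identify the pinned conditional mean with the correctly signed TAP solution.

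\textbf{Between checkpoints, containment.} Let $\bM_k=\bbE[\theta\mid\cH_k]$. This is a martingale with coordinates in $[-1,1]$, so Doob's $L^2$ maximal inequality gives
\[
\bbE\max_{j_\ell\le k<j_{\ell+1}}\|\bM_k-\bM_{j_\ell}\|^2\le4\,\bbE\|\bM_{j_{\ell+1}}-\bM_{j_\ell}\|^2 .
\]
The right-hand side is at most $4\,\bbE(\|\bM_{j_{\ell+1}}\|^2-\|\bM_{j_\ell}\|^2)\approx 4n\,(g(\alpha_{\ell+1})-g(\alpha_\ell))$, where $g(\alpha)=\alpha+(1-\alpha)q_\alpha$ is the limiting overlap profile, taken from the overlap concentration of \cite{alaoui2017finite} and Proposition~\ref{prop:logarithmic-pinning}. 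Choosing the checkpoint grid so that these increments are at most $\epsilon^3$, Markov's inequality gives $\max_k\|\bM_k-\bM_{j_\ell}\|\le\epsilon\sqrt n$ with probability $1-O(\epsilon)$. Restricting to $R_k$ then keeps $\bm^{(k)}$ in $B(\bv^{(j_\ell)}|_{R_k},\rho\sqrt n)$ whenever $\rho\ge2\epsilon$. The error is only $O(\epsilon)$, not $o(1)$, so I would let $\epsilon\downarrow0$ slowly while keeping the record finite. Alternatively I would use overlap concentration to get $o(1)$ for fixed $\epsilon$, noting that the increments concentrate at scale $O(\sqrt n)$.

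The first interval $[k_0,j_0)$ is handled the same way, starting from time $k_0$ and requiring $\alpha_0$ small.

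\textbf{Between checkpoints, Hessian transfer.} This is the principal-submatrix step. We have
\[
\nabla^2\Phi_k(\bu)=\operatorname{diag}\phi''(u_i)-\bJ_{R_k,R_k}+a_{k/n}\bI ,
\]
and this equals the $R_k$ principal submatrix of $\nabla^2\Phi_{j_\ell}$, evaluated at any extension of $\bu$, up to the coefficient change $a_{k/n}-a_{j_\ell/n}=O(\alpha_{\ell+1}-\alpha_\ell)$. Principal submatrices inherit the lower bound $2\kappa$. The extension I would use is $\bu$ on $R_k$ and $\bv^{(j_\ell)}$ on $S_k\setminus S_{j_\ell}$, which stays within $\rho\sqrt n$ of the center. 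A fine grid therefore leaves $\kappa_\lambda=\kappa$.

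\textbf{Final regime $k\ge j_M$.} Take $\alpha_M$ close to $1$. Then $\|\bJ_{R_k,R_k}\|_{\mathrm{op}}\le\lambda^2(1-\alpha_M)+2\lambda\sqrt{1-\alpha_M}+o(1)$, because a Gaussian principal block of size $N$ has norm about $2\lambda\sqrt{N/n}$, uniformly over subsets determined by the independent order. Since $\phi''\ge1$, this gives convexity on the whole cube.

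\textbf{Main obstacle.} The hardest step is containment near $k_0$ with the correct sign. There the side-channel strength vanishes and the conditional mean must be identified with the signed TAP/AMP center, not its reflection. I would handle it with Proposition~\ref{prop:logarithmic-pinning}: since $\|\bm\|^2\approx Nq$ and this matches $\|\bv\|^2$, an overlap argument shows $\langle\bm,\bv^+\rangle\approx Nq$, which yields the distance bound.
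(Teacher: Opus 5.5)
Your architecture matches the paper's: checkpoint centers, the posterior-mean martingale between checkpoints, the principal-submatrix Hessian transfer with new pins filled by the stored center, and whole-cube convexity after $j_M$. However, the choice of constants is circular. You take common $\kappa,\rho$ at every checkpoint from Lemma~\ref{lem:side-amp-input} and then refine the mesh in terms of $\kappa,\rho$. That lemma gives $c,R$ depending on $(r_\alpha,t_\alpha)$, with no uniformity in $\alpha$, so refining the mesh introduces new fractions whose constants may be smaller. Nothing then guarantees a finite record; an adaptive mesh could even accumulate before $\alpha_M$.

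The missing step is the paper's compactness argument. For each $\alpha\in[\alpha_0,\alpha_M]$, fix a count $L_\alpha$. On the norm event, fixed-count AMP iterates move by $O(|\alpha-\beta|\sqrt N)$. At the same $\bu$, $\nabla^2\Phi_\beta(\bu)-\nabla^2\Phi_\alpha(\bu)=-(\bA_\beta-\bA_\alpha)+(a_\beta-a_\alpha)\bI$, because the entropy Hessians cancel; hence convexity persists on a neighborhood of $\alpha$. Next, re-center the ball at the exact posterior mean $\bm_\beta$, using the scalar limit $q_\beta-F(s_{L_\alpha}(\beta))$. This removes any dependence on the AMP count, which is what lets the algorithm's counts $L_\ell$ differ from $L_\alpha$. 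A finite subcover then gives $\varepsilon_*,c_*>0$. Only after that are $\rho,\kappa$, the mesh, and finally the $L_\ell$ chosen.

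Second, the Markov route cannot give $b_n=o(1)$. The per-interval bound $4\{g(\alpha_{\ell+1})-g(\alpha_\ell)\}/\epsilon^2$ telescopes, for every mesh, to $4\{g(\alpha_M)-g(\alpha_0)\}/\epsilon^2$, a fixed positive constant. Letting $\epsilon\downarrow0$ with $n$ contradicts keeping the record independent of $n$, and it also destroys the finite union bound over fixed-parameter events. Your alternative is the correct fix, and it is exactly \eqref{eq:checkpoint-maximal-distance}. For $\cH_p$-measurable $\bw$, one has $\|\overline{\bm}^{(k)}\|^2\le\bbE[\|\overline{\bm}^{(h)}\|^2\mid\cH_k]$ and $\langle\overline{\bm}^{(k)},\bw\rangle=\bbE[\langle\theta,\bw\rangle\mid\cH_k]$. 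Doob applied to these two martingales, with $L^2$ concentration of $\|\overline{\bm}^{(h)}\|^2/n$ and $\langle\theta,\bw\rangle/n$, bounds $\max_{p\le k\le h}\|\overline{\bm}^{(k)}-\bw\|^2/n$ by $d-c+o_{\bbP}(1)$.

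Taking $\bw=\bv^+(\theta_{S_{k_0}})$ handles the initial interval directly. Only an upper bound on $\|\overline{\bm}^{(k)}\|^2$ is needed, and the sign is already fixed by the orientation step. So Proposition~\ref{prop:logarithmic-pinning} is unnecessary here (your route through it is valid but roundabout), and this interval is not the main obstacle.

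Two smaller points. First, Lemma~\ref{lem:unpinned-tap-input} controls $\nabla^2\Psi_0$, not $\nabla^2\Phi_0$. You need the identity $\nabla^2\Phi_0-\nabla^2\Psi_0=\lambda^2(\|\bu\|^2/n-q_0)\bI+\frac{2\lambda^2}{n}\bu\bu^\top$ together with a small $\rho_0$. Second, $G_k$ must not include checkpoint or maximal events beyond time $k$, since that breaks $\cH_k$-measurability. Define $G_k$ as the event that the three conclusions hold at time $k$, and show that the intersection of these events has probability $1-o(1)$.
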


The proof of Lemma~\ref{lem:convexity-barycenter} is incorporated in Section~\ref{ss:checkpoint-construction}.
Based on Lemma~\ref{lem:convexity-barycenter}, set
\[
    B_n=\sqrt n\{n^2+1+2\|\bJ\|_F+\lambda^2\},\qquad
    D_n=2\sqrt n,\qquad L_n=\lceil n^{40}\rceil,\qquad
    \gamma_n=\frac{D_n}{B_n\sqrt{L_n}}.
\]
For a fixed input prefix $x_{S_k}$, run the following projected-gradient routine for exactly $L_n$ iterations and return the average of its pre-update iterates. Within this call, $\bu^t$, $\bg$, and $\bw$ are local working variables for that fixed prefix; only their local notation suppresses the argument.

\medskip
\begin{breakablealgorithm}
\caption{Deterministic constrained TAP optimization: \textsc{TAPOptimize}}
\label{alg:tap-optimization}
\begin{algorithmic}[1]
\Require $\bJ$, input prefix $x_{S_k}$, its field $\widehat{\bh}^{(k)}(x_{S_k})$, $a_{k/n}$, and $\cK_k(x_{S_k})$ with its stored center.
\Ensure The computed vector $\widehat{\bm}^{(k)}(x_{S_k})$, or a computation-failure flag.
\State Set $\bu^0$ to the restricted stored center; use $\bu^0=0$ for the whole cube.
\State Set $L_n=\lceil n^{40}\rceil$, $\gamma_n=D_n/(B_n\sqrt{L_n})$, and $\bw=0$.
\For{$t=0,\ldots,L_n-1$}
    \State $\bw\gets\bw+\bu^t$.
    \State $\bg\gets\operatorname{arctanh}(\bu^t)-\bJ_{R_k,R_k}\bu^t-\widehat{\bh}^{(k)}(x_{S_k})+a_{k/n}\bu^t$.
    \State $\bu^{t+1}\gets\operatorname{Proj}_{\cK_k(x_{S_k})}(\bu^t-\gamma_n\bg)$.
    \State If a prescribed numerical cap or feasibility check fails, return a failure flag.
\EndFor
\State \Return $\widehat{\bm}^{(k)}(x_{S_k})=\bw/L_n$.
\end{algorithmic}
\end{breakablealgorithm}

\medskip
We now show that Algorithm~\ref{alg:tap-optimization} approximately optimizes $\Phi_k$ in $\cK_k$, as incorporated in the following lemma whose proof is postponed to Section~\ref{ss:deterministic-tap-optimization}.
\begin{lem}{\label{lem:optimization-accuracy}}
    Suppose that $\|\bJ\|_F\le n^2$ and $\Phi_k(\cdot;x_{S_k})$ is convex on $\cK_k(x_{S_k})$. Assume that Algorithm~\ref{alg:tap-optimization} returns a vector satisfying the prescribed precision and feasibility rules. Then, for all sufficiently large $n$,
    \begin{equation}\label{eq:sampling-optimization-accuracy}
        \Phi_k(\widehat{\bm}^{(k)}(x_{S_k});x_{S_k}) -\min_{\bu\in\cK_k(x_{S_k})}\Phi_k(\bu;x_{S_k})\le n^{-12}.
    \end{equation}
\end{lem}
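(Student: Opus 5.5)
The plan is to use the standard guarantee for projected subgradient descent with iterate averaging on a convex set, followed by a bookkeeping check that the constants $B_n$, $D_n$, $L_n$ give error $n^{-12}$. Write $\cK=\cK_k(x_{S_k})$ and $\Phi=\Phi_k(\cdot;x_{S_k})$, and let $\bu^\star\in\cK$ be a minimizer. It exists because $\cK$ is nonempty and compact (Lemma~\ref{lem:convexity-barycenter}), and $\Phi$ is continuous on $\cK\subset D_{R_k,n}$, since the clipping keeps coordinates away from $\pm1$.

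The first step is to bound the two geometric quantities.

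\emph{Diameter.} Since $\cK\subset[-1,1]^{R_k}$, every $\bu^0,\bu^\star\in\cK$ satisfies $\|\bu^0-\bu^\star\|\le 2\sqrt n=D_n$.

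\emph{Gradient.} The vector $\bg$ computed in the algorithm is exactly $\nabla\Phi(\bu^t)$, because $\phi'(v)=\operatorname{arctanh}(v)$. On $D_{R_k,n}$ we have $|\operatorname{arctanh}(u_i)|\le\operatorname{arctanh}(1-2^{-n})\le \tfrac12\log(2^{n+1})\le n$. So the entropy part has norm at most $n\sqrt n\le n^2\sqrt n$. Next, $\|\bJ_{R_k,R_k}\bu\|\le\|\bJ\|_F\sqrt n$. The field satisfies $\|\widehat\bh^{(k)}\|\le\|\bJ\|_F\sqrt n$, since each $|h_i|\le\sum_a|J_{iI_a}|$ and Cauchy--Schwarz gives $\|\bh\|^2\le k\|\bJ\|_F^2$. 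Finally $a_{k/n}\le\lambda^2$. Altogether $\|\bg\|\le\sqrt n(n^2+1+2\|\bJ\|_F+\lambda^2)=B_n$, where the extra $+1$ absorbs the numerical slack.

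The second step is the standard averaged projected-gradient inequality. By nonexpansiveness of $\operatorname{Proj}_\cK$ and convexity of $\Phi$ on $\cK$,
\[
\|\bu^{t+1}-\bu^\star\|^2\le\|\bu^t-\bu^\star\|^2-2\gamma_n\langle\bg^t,\bu^t-\bu^\star\rangle+\gamma_n^2B_n^2,
\qquad \langle\bg^t,\bu^t-\bu^\star\rangle\ge\Phi(\bu^t)-\Phi(\bu^\star).
\]
Summing over $t<L_n$ and applying Jensen to the average $\widehat\bm=\bw/L_n\in\cK$ gives
\[
\Phi(\widehat\bm)-\Phi(\bu^\star)\le \frac{D_n^2}{2\gamma_nL_n}+\frac{\gamma_nB_n^2}{2}=\frac{D_nB_n}{\sqrt{L_n}}.
\]
Here $\widehat\bm\in\cK$ by convexity of $\cK$.

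The third step is the arithmetic. Using $\|\bJ\|_F\le n^2$, we get $D_nB_n\le 2n\,(3n^2+1+\lambda^2)\le 8n^3$ for large $n$. Since $\sqrt{L_n}\ge n^{20}$, the bound is at most $8n^{-17}\le n^{-12}$.

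The main obstacle is the finite-precision aspect. The actual iterates carry arithmetic errors in the projection, the $\operatorname{arctanh}$ evaluation, and the coefficient $a_{k/n}$, whose error is at most $n^{-120}$. I would handle this with an inexact version of the same recursion. Suppose each step commits a perturbation of norm at most $\varepsilon_n$ under the prescribed precision rules, with the feasibility check guaranteeing $\bu^t\in\cK$. Then the telescoped inequality gains an additive term of order $L_n(D_n+B_n\gamma_n)\varepsilon_n/\gamma_n$. The same applies to perturbed gradients, which gain $D_n\varepsilon_n$ per step after averaging. Both terms are below $n^{-13}$ once $\varepsilon_n\le n^{-100}$, the regime I expect the prescribed rules guarantee. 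The lost slack in the exponent, $n^{-17}$ versus $n^{-12}$, comfortably absorbs these terms.
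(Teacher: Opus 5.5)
Your proposal is correct and follows the paper's proof essentially step for step. It uses the same bounds $\|\nabla_{\bu}\Phi_k\|\le B_n$ on the clipped cube and $\operatorname{diam}\cK_k\le D_n$, and the same averaged projected-subgradient estimate $D_nB_n/\sqrt{L_n}=O_\lambda(n^{-17})$, which the paper cites from Bubeck rather than rederiving. It then handles finite precision by the same inexact telescoping with $n^{-100}$ gradient and feasible-projection tolerances; the extra terms are negligible because $\gamma_n^{-1}=O_\lambda(n^{22})$, and these tolerances (including the $n^{-120}$ coefficient error folded into the gradient error) are exactly the ones prescribed in Section~\ref{ss:deterministic-tap-optimization}.
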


Algorithm~\ref{alg:sequential-tap} implements the proposal oracle $\cA^{\pro}$, which samples from $Q$ and evaluates $Q$ at a supplied configuration. In \textsc{Sample} mode it draws the next sign; in \textsc{Evaluate} mode it uses the supplied sign and accumulates the same log-density. Each call starts its prefix computations afresh. In particular, the previous Metropolis state, previous proposal, and their checkpoint centers are not inputs. The initial stored vector, tuning record, order, and numerical rules remain fixed, but the phase orientation and checkpoint centers are recomputed from the configuration being generated or evaluated. 

\medskip
\begin{breakablealgorithm}
\caption{Sequential proposal generation or evaluation: \textsc{SequentialTAP}}
\label{alg:sequential-tap}
\begin{algorithmic}[1]
\Require $\bJ$, fixed order $I$, one tuning record with its stored $\bv$, and mode \textsc{Sample} or \textsc{Evaluate}.
\Require In \textsc{Evaluate} mode, a supplied configuration $s\in\{-1,1\}^n$.
\Ensure A configuration $s$ and its log-density $\ell_Q=\log Q(s)$.
\If{this record is assigned the uniform proposal}
    \State In \textsc{Sample} mode, draw $s$ uniformly; otherwise keep the supplied $s$.
    \State \Return $(s,-n\log2)$.
\EndIf
\State In \textsc{Sample} mode, draw $s_{I_1},\ldots,s_{I_{k_0}}$ independently and uniformly.
\State Set $\ell_Q\gets-k_0\log2$ and compute $\bv^+(s_{S_{k_0}})$ by \eqref{eq:sampling-phase-orientation}.
\State Form $R_{k_0}$ and $\widehat{\bh}^{(k_0)}(s_{S_{k_0}})$ from $s_{S_{k_0}}$; set $\mathsf{fallback}\gets\mathsf{false}$.
\State If orientation or field initialization fails, set $\mathsf{fallback}\gets\mathsf{true}$.
\For{$k=k_0,\ldots,n-1$}
    \If{$\mathsf{fallback}$}
        \State Set the entire vector $\widehat{\boldsymbol\ell}^{(k)}(s_{S_k})=0$.
    \Else
        \If{$k=j_\ell$ for some $0\le\ell<M$}
            \State Compute and store $\bv^{(k)}(s_{S_k})$ by \eqref{eq:sampling-side-amp}.
        \EndIf
        \State Form $\cK_k(s_{S_k})$ using \eqref{eq:sampling-checkpoint-domains}.
        \State $\widehat{\bm}^{(k)}(s_{S_k})\gets\textsc{TAPOptimize}(\bJ,s_{S_k},\widehat{\bh}^{(k)}(s_{S_k}),a_{k/n},\cK_k(s_{S_k}))$.
        \State Compute $\widehat{\boldsymbol\ell}^{(k)}(s_{S_k})$ by \eqref{eq:sampling-proposal-logits}.
        \State On any failure, set $\mathsf{fallback}\gets\mathsf{true}$ and the full logit vector to zero; skip unfinished computations.
    \EndIf
    \State Inspect $i=I_{k+1}$ and set $\ell=\widehat\ell^{(k)}_i(s_{S_k})$.
    \If{mode is \textsc{Sample}}
        \State Draw $s_i\in\{-1,1\}$ with $\bbP(s_i=+1)=(1+\tanh\ell)/2$.
    \EndIf
    \State $\ell_Q\gets\ell_Q+s_i\ell-\log(2\cosh\ell)$.
    \State $R_{k+1}\gets R_k\setminus\{i\}$.
    \If{not $\mathsf{fallback}$}
        \State $\widehat h_j^{(k+1)}(s_{S_{k+1}})\gets\widehat h_j^{(k)}(s_{S_k})+J_{ji}s_i$ for every $j\in R_{k+1}$.
        \State If this update fails, set $\mathsf{fallback}\gets\mathsf{true}$ for the next step.
    \EndIf
\EndFor
\State \Return $(s,\ell_Q)$.
\end{algorithmic}
\end{breakablealgorithm}
\medskip

\section{Analysis of the sequential proposal}\label{s:auxiliary-proofs}

We prove the domain and logit guarantees used in Section~\ref{ss:conditional-tap}. The local-convexity and state-evolution theorems are taken from the literature. The additional work is to transfer their conclusions along the revealing path and to obtain a quantitative overlap estimate when only $O(\log n)$ coordinates have been revealed.

\subsection{Scalar parameters and probabilistic inputs}\label{ss:prelim-lem}

We first record the scalar facts needed to choose the coefficients and checkpoints. For the Rademacher Gaussian channel, $F$ is increasing and strictly concave, $F(0)=0$, $F'(0)=1$, and
\[
    F'(u)=\bbE\operatorname{sech}^4(u+\sqrt u\,G)
       \le 1-F(u),\qquad
    1-F(u)<\frac1{1+u}\quad(u>0).
\]
These standard channel identities and the properties of the positive fixed point are given in \cite[Proposition~A.2 and its scalar-channel formulas]{celentano2023local}; the positive-side-channel version is also recorded in \cite[arXiv version, Proposition~3]{celentano2024sudakov}. The strict bound on $1-F(u)$ is the usual comparison with the linear estimator in the scalar Gaussian channel. In particular, $q_0>1-\lambda^{-2}$, and $q=F(rq+t)$ has a unique positive solution when $t>0$, with $r(1-q)<1$. Since $r_\alpha q_0+t_\alpha\ge\lambda^2q_0$, monotonicity of $F$ and uniqueness of the positive root give $q_\alpha\ge q_0$. Consequently,
\[
    q_\alpha\ge q_0>1-\lambda^{-2},\qquad
    0\le r_\alpha F'(r_\alpha q_\alpha+t_\alpha)
       \le a_\alpha\le a_0<1.
\]
Thus $[1-\lambda^{-2},1]$ is a valid bisection bracket. On this whole bracket, not only at the root,
\[
    \frac{\mathrm d}{\mathrm dq}\{q-F(r_\alpha q+t_\alpha)\}
       \ge 1-\lambda^2\{1-F(\lambda^2-1)\}>0.
\]
Indeed $r_\alpha q+t_\alpha\ge\lambda^2-1$, and the strict scalar bound above applies. This gives a positive derivative bound depending only on $\lambda$, which can itself be certified by quadrature. The implicit-function theorem applies uniformly on $\alpha\in[0,1]$. Differentiating the fixed-point equation gives
\[
    q_\alpha'
      =\frac{\lambda^2(1-q_\alpha)F'(r_\alpha q_\alpha+t_\alpha)}
             {1-r_\alpha F'(r_\alpha q_\alpha+t_\alpha)},\qquad
    a_\alpha'
      =-\frac{\lambda^2(1-q_\alpha)}
             {1-r_\alpha F'(r_\alpha q_\alpha+t_\alpha)}.
\]
The derivatives are bounded for fixed $\lambda>1$. In particular, with
$\bar q_\alpha=\alpha+(1-\alpha)q_\alpha$, there is $L_\lambda<\infty$ such that
\begin{equation}\label{eq:scalar-checkpoint-lipschitz}
    |a_\alpha-a_\beta|\le L_\lambda|\alpha-\beta|,\qquad
    0\le\bar q_\beta-\bar q_\alpha
       \le L_\lambda(\beta-\alpha),\qquad 0\le\alpha\le\beta\le1.
\end{equation}
Here $a_\alpha=\lambda^2(1-\bar q_\alpha)$ and
$\bar q_\alpha'=(1-q_\alpha)/(1-r_\alpha F'(r_\alpha q_\alpha+t_\alpha))>0$.
The same bounds hold uniformly when $\lambda$ ranges over a sufficiently small compact neighborhood of its fixed value in $(1,\infty)$.


For the probabilistic inputs, we use the following common normalization. Let $\theta$ be uniform on $\{-1,1\}^N$, let $\bW_N$ be GOE with off-diagonal variance $1/N$ and diagonal variance $2/N$, and let $\bg\sim\cN(0,\bI_N)$, independently. For $r,t\ge0$, set
\[
    \bA=\frac rN\theta\theta^{\top}+\sqrt r\,\bW_N,
    \qquad \bh=t\theta+\sqrt t\,\bg.
\]
The posterior distribution given $(\bA,\bh)$ is proportional to
$\exp\{\frac12\sigma^{\top}\bA\sigma+\bh^{\top}\sigma\}$ on $\{-1,1\}^N$.
The diagonal contribution is constant in $\sigma$, so retaining the GOE diagonal does not change the posterior or any of its overlaps. We write $\langle\cdot\rangle$ for posterior averaging and $\bbE$ for expectation under the joint planted law. Replicas $\sigma^1,\sigma^2,\ldots$ are independent posterior samples conditional on $(\bA,\bh)$, and
\[
    R_{a\theta}=\frac1N\sum_{i=1}^N\sigma_i^a\theta_i,
    \qquad R_{ab}=\frac1N\sum_{i=1}^N\sigma_i^a\sigma_i^b.
\]
Whenever $t>0$, or $t=0$ and $r>1$, $q$ denotes the unique positive solution of $q=F(rq+t)$. The matrix $\bA$ is generic in these inputs and will later be identified with a free block of $\bJ$.

We record the posterior identity, the two overlap-concentration inputs, and the finite-dimensional cavity expansion from \cite{alaoui2017finite} explicitly. The parameter comparisons needed to apply these results uniformly are given below; the interpolation and cavity theorems themselves are used as cited inputs.

\begin{lem}[Nishimori identity]\label{lem:nishimori-input}
Let $\cF$ be the sigma-field generated by the observations being retained. Conditional on $\cF$, let $\sigma^1,\ldots,\sigma^{a+1}$ be independent samples from $\mathcal L(\theta\mid\cF)$, independent also of $\theta$ conditionally on $\cF$. For every bounded function $f$ of $a+1$ configurations, which may also depend measurably on $\cF$,
\[
    \bbE\!\left[
       \langle f(\theta,\sigma^1,\ldots,\sigma^a)\rangle
          \,\middle|\,\cF\right]
       =\langle f(\sigma^{a+1},\sigma^1,\ldots,\sigma^a)\rangle.
\]
In particular, $R_{1\theta}$ and $R_{12}$ have the same distribution after averaging over the observations. The identity remains valid when the retained observations include revealed coordinates of $\theta$.
\end{lem}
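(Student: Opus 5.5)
This is the standard Nishimori identity, and I would prove it directly from Bayes' rule. Fix a version of the regular conditional law $\mathcal L(\theta\mid\cF)$, which exists because $\theta$ takes values in the finite set $\{-1,1\}^n$. By construction, conditionally on $\cF$ the variables $\theta,\sigma^1,\ldots,\sigma^a$ are independent. Each of them has law $\mathcal L(\theta\mid\cF)$: for the replicas this is their definition, and for $\theta$ it is the definition of its conditional law. Hence, conditionally on $\cF$, the tuple $(\theta,\sigma^1,\ldots,\sigma^a)$ is an i.i.d. sample of size $a+1$ from $\mathcal L(\theta\mid\cF)$. So is $(\sigma^{a+1},\sigma^1,\ldots,\sigma^a)$.

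Write $\langle f(\theta,\sigma^1,\ldots,\sigma^a)\rangle$ for the average over the replicas with $\theta$ fixed. This is an $\sigma(\cF,\theta)$-measurable function, namely $\sum_{s^1,\ldots,s^a}f(\theta,s^1,\ldots,s^a)\prod_{b}\mathcal L(\theta\mid\cF)(s^b)$. Taking $\bbE[\cdot\mid\cF]$ integrates $\theta$ against $\mathcal L(\theta\mid\cF)$. The result is the full $(a+1)$-fold product sum
\[
\sum_{s^0,\ldots,s^a}f(s^0,\ldots,s^a)\prod_{b=0}^a\mathcal L(\theta\mid\cF)(s^b).
\]
This is exactly $\langle f(\sigma^{a+1},\sigma^1,\ldots,\sigma^a)\rangle$. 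Measurable dependence of $f$ on $\cF$ is harmless, since $f$ is frozen when we condition on $\cF$, and boundedness justifies all the exchanges of summation and expectation.

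For the overlap statement, take $a=1$ and $f=\mathbf 1\{R(\cdot,\cdot)\in B\}$. Taking expectations gives $\bbP(R_{1\theta}\in B)=\bbP(R_{12}\in B)$. The same argument applies to overlaps over any fixed or $\cF$-measurable index set such as $R_k$.

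Finally, suppose $\cF$ contains revealed coordinates $(I_a,\theta_{I_a})_{a\le k}$ together with $\bJ$. The argument above used nothing about $\cF$ beyond its being a sigma-field of observations, so it goes through unchanged. The conditional law is then the pinned posterior \eqref{eq:conditioinal-distribution} on $R_k$, with the revealed coordinates deterministic. There is no real obstacle here; the only point requiring care is the existence and measurability of the regular conditional law, and finiteness of the state space settles that.
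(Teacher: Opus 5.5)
Your proof is correct and is the standard argument: the identity is just the tower property for the conditional law $\mathcal L(\theta\mid\cF)$, which the paper does not reprove but cites from \cite[Section~3]{alaoui2017finite}. Your remark that nothing beyond $\cF$ being a $\sigma$-field of retained observations is used is exactly the paper's justification for including the revealed index--value pairs in $\cF$ and working with the corresponding pinned posterior.
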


This is the posterior-replica identity recorded in \cite[Section~3, ``The Nishimori property'']{alaoui2017finite}. In the Gaussian model above we take $\cF=\sigma(\bA,\bh)$. Along the revealing path we include the exposed index--value pairs in $\cF$ and use the posterior conditional on those pairs. Thus replacing the planted vector by a replica always refers to the posterior for the same retained observations.

\begin{lem}[Unpinned absolute-overlap input]\label{lem:unpinned-overlap-input}
In the Gaussian model above, let $t=0$ and $r>1$. For every $\varepsilon>0$ there are $C_{r,\varepsilon},c_{r,\varepsilon}>0$ such that
\[
    \bbE\left\langle
       \bone_{\{\left||R_{1\theta}|-q\right|>\varepsilon\}}
                    \right\rangle
       \le C_{r,\varepsilon}e^{-c_{r,\varepsilon}N}.
\]
The same estimate holds with $R_{1\theta}$ replaced by $R_{12}$. The constants can be chosen uniformly for $r$ in any fixed compact subinterval of $(1,\infty)$.
\end{lem}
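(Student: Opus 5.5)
The plan is to derive the estimate from a constrained free-energy comparison, which is the mechanism behind the corresponding statement in \cite{alaoui2017finite}. First, the case of $R_{12}$ is free once $R_{1\theta}$ is handled. By Lemma~\ref{lem:nishimori-input} with $f(\theta,\sigma^1)=\bone_{\{||R_{1\theta}|-q|>\varepsilon\}}$, the averaged quantities $\bbE\langle\bone_{\{||R_{1\theta}|-q|>\varepsilon\}}\rangle$ and $\bbE\langle\bone_{\{||R_{12}|-q|>\varepsilon\}}\rangle$ coincide. So it suffices to treat the overlap with the planted vector.

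For $E_\varepsilon=\{m:||m|-q|>\varepsilon\}$, I write
\[
    \langle\bone_{\{R_{1\theta}\in E_\varepsilon\}}\rangle=\frac{Z_{E_\varepsilon}}{Z},
\]
where $Z_E$ is the partition function restricted to configurations with $N^{-1}\langle\sigma,\theta\rangle\in E$. I split $E_\varepsilon\cap[-1,1]$ into $O(1/\eta)$ intervals of width $\eta$ and bound each restricted free energy separately. Both $\log Z$ and $\log Z_E$ are $\sqrt{r/N}\cdot N$-Lipschitz functions of the Gaussian entries of $\sqrt N\bW_N$, on the scale of $\langle\sigma,\bW_N\sigma\rangle/2$. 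Gaussian concentration then gives deviations of order $\eta N$ from their means with probability at most $e^{-c\eta^2N}$. Since the ratio is bounded by $1$, it is enough to control the expectations.

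For the lower bound, I use the replica-symmetric formula
\[
    N^{-1}\bbE\log Z\to\psi_{\rm RS}(r)=\sup_{m\ge0}\Psi_r(m),
    \qquad
    \Psi_r(m)=\bbE\log2\cosh\bigl(rm+\sqrt{rm}\,G\bigr)-\frac{rm^2}{4}+\dots,
\]
valid for $r>1$ (up to constant terms). I want this with an error $o(1)$ uniform on compact $r$-sets, and I would take it from the Guerra--Toninelli/adaptive-interpolation bounds cited in \cite{alaoui2017finite}. For the upper bound on an interval $[m_0,m_0+\eta]$, I run Guerra's interpolation with the overlap with $\theta$ constrained, in the style of a Franz--Parisi potential. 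In the planted model the remainder term has a sign by the Nishimori identity, and this yields
\[
    N^{-1}\bbE\log Z_{[m_0,m_0+\eta]}\le\Psi_r(m_0)+C\eta+o(1).
\]

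The scalar input is that $\Psi_r$ is maximized exactly at $m=q$, and at $-q$ by symmetry, with zero strictly suboptimal when $r>1$. This follows from uniqueness of the positive fixed point of $q=F(rq)$ together with concavity and monotonicity of $F$. Continuity in $(r,m)$ on a compact set then gives a gap $\Psi_r(m)\le\psi_{\rm RS}(r)-4\delta$ on $E_\varepsilon$, uniformly in $r$ in the compact subinterval. Choosing $\eta\ll\delta$ and combining the upper bound, the lower bound and the concentration estimate gives $Z_{E_\varepsilon}/Z\le e^{-\delta N}$ outside an event of probability $e^{-cN}$. This yields the stated bound with constants uniform in $r$.

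The main obstacle is the constrained interpolation upper bound: making the Franz--Parisi/Guerra remainder have the right sign with the overlap restriction in place, and obtaining a uniform finite-$N$ error rather than only an asymptotic one. If this fails, my fallback is the finite-size free-energy and overlap bounds of \cite{alaoui2017finite} applied directly, with only the uniformity in $r$ checked by tracking the continuity of their constants.
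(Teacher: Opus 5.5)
Your argument is correct in outline, but it takes a genuinely different route from the paper. The paper proves nothing new at fixed $r$. It takes the absolute-overlap tail from \cite[Proposition~13]{alaoui2017finite} (signal parameter $r$, interpolation parameter one), and passes to $R_{12}$ by Lemma~\ref{lem:nishimori-input} exactly as you do. It then gets uniformity over a compact $r$-set by the perturbation argument given after Lemma~\ref{lem:overlap-concentration}:
\begin{itemize}
\item couple two values of $r$ through the same $\theta,\bW_N$;
\item on $\{\|\bW_N\|_{\mathrm{op}}\le10\}$ the two Hamiltonians differ by at most $\eta N$ uniformly in $\sigma$, so the normalized Gibbs densities differ by a factor at most $e^{2\eta N}$;
\item since $R_{1\theta}$ is the same function of $\sigma$ in both models and $q$ is continuous in $r$, the tail at tolerance $\varepsilon/2$ at a center transfers to tolerance $\varepsilon$ nearby once $2\eta$ is below half the rate;
\item a finite cover finishes.
\end{itemize}
This is in effect your fallback, except that uniformity is obtained without reopening the reference's constants. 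Your large-deviation argument instead re-derives the fixed-$r$ input from two free-energy bounds. Uniformity then comes directly from continuity of the RS potential in $(r,m)$ and from the $r$-uniform $O(1/N)$ interpolation errors, with no cover or density-ratio step. Your route is longer but self-contained, and it shows why only $|R_{1\theta}|$ concentrates at $t=0$: the constrained bound is even in $m$. The paper's route is short but leans entirely on the cited proposition at $\tau=1$.

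One justification needs fixing. In the constrained upper bound, the sign of the remainder cannot come from the Nishimori identity. The Gibbs measure restricted to $\{R_{1\theta}\in[m_0,m_0+\eta]\}$ is not a posterior, so a planted interpolation with the restriction in place does not close. What works is Guerra's SK bound:
\begin{itemize}
\item Gauge to $\theta=\bone$, using invariance of $\bW_N$ under conjugation by $\operatorname{diag}(\theta)$.
\item On the slice, the spike term $\frac{rN}{2}R_{1\theta}^2$ is constant up to $O(\eta N)$. For $h\ge0$, $e^{h\sum_i\sigma_i}\ge e^{hm_0N}$ there.
\item Interpolate $\frac{\sqrt r}{2}\sigma^{\top}\bW_N\sigma$ against the field $h+\sqrt{rq}\,z_i$. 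Its remainder $-\frac{r}{4}\bbE\langle(R_{12}-q)^2\rangle_t\le0$ holds for any Gibbs measure, restricted or not.
\item Bound the restricted $t=0$ sum by the full product.
\end{itemize}
With $q=m_0\ge0$ and $h=rm_0$, this gives exactly
$\Psi_r(m_0)=\bbE\log2\cosh(rm_0+\sqrt{rm_0}\,G)-\frac{rm_0^2}{4}-\frac{rm_0}{2}+\frac{r}{4}$
up to $C\eta+O(1/N)$. The case $m_0<0$ follows from $\sigma\mapsto-\sigma$. The term $-rm/2$ is not a constant; it is what puts the maximizer at the root of $q=F(rq)$, so it cannot be absorbed into your ``up to constant terms''.

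The Nishimori identity supplies the sign in the other direction. It gives the unconstrained lower bound $N^{-1}\bbE\log Z\ge\Psi_r(q)-C/N$ from the planted interpolation, which is all you need there. Finally, use gauge invariance to make the conditional means of $\log Z$ and $\log Z_E$ given $\theta$ independent of $\theta$ before applying Gaussian concentration. With these repairs your proof goes through, with constants uniform on compact subintervals of $(1,\infty)$.
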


For a fixed $r$, this is the last assertion of \cite[Proposition~13]{alaoui2017finite}, with their signal parameter equal to $r$ and their interpolation parameter equal to one. The replica formulation follows from Lemma~\ref{lem:nishimori-input}. The compact-parameter assertion follows from the comparison given after Lemma~\ref{lem:overlap-concentration}.

For the initial interval, define the unpinned TAP functional using the same interaction matrix as the sampler:
\begin{equation}{\label{eq:def-Psi-0}}
    \Psi_0(\bu)=\sum_{i=1}^n\phi(u_i) -\frac12\bu^{\top}\bJ\bu -\frac{\lambda^2n}{4} \left(1-\frac{\|\bu\|^2}{n}\right)^2, \qquad \bu\in(-1,1)^n.
\end{equation}

\begin{lem}[Unpinned TAP input]\label{lem:unpinned-tap-input}
Fix $\lambda>1$. There are an integer $L_{\rm init}\ge1$ and constants $\gamma_*,c_0,R_0>0$ such that, for any fixed $0<\gamma<\min\{\gamma_*,1\}$, the initial center $\bv$ constructed in Section~\ref{ss:conditional-tap} satisfies
\[
    \frac1n\left\|\bv\bv^{\top}
       -\bbE[\theta\theta^{\top}\mid\bJ]\right\|_F
       \xrightarrow{\bbP}0,\qquad
    \frac{\|\bv\|^2}{n}\xrightarrow{\bbP}q_0,\qquad
    \frac{|\langle\theta,\bv\rangle|}{n}\xrightarrow{\bbP}q_0.
\]
Moreover, with probability tending to one,
\[
    \nabla^2\Psi_0(\bu)\succeq c_0\bI
    \quad\text{for every }\bu\in(-1,1)^n\cap
       \bigl(B(\bv,R_0\sqrt n)\cup B(-\bv,R_0\sqrt n)\bigr).
\]
No planted sign is used to compute $\bv$.
\end{lem}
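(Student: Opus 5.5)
The plan is to derive Lemma~\ref{lem:unpinned-tap-input} from the local-convexity theory of \cite{celentano2023local}, with the sign symmetry handled at the end. Note that $\Psi_0$ is exactly the TAP free energy of \cite{celentano2023local} for the rank-one Rademacher spike with $\bJ=\frac{\lambda^2}{n}\theta\theta^\top+\lambda\bW$. It agrees with their functional after the change of variables matching their normalization of the Onsager correction $\frac{\lambda^2n}{4}(1-\|\bu\|^2/n)^2$.

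\textbf{Step 1: the AMP phase.} I will invoke the spectrally initialized AMP state evolution (the Montanari--Venkataramanan form used in \cite{celentano2023local}). With $\bz^0$ the top eigenvector scaled so that $\|\bz^0\|^2=n\lambda^2(\lambda^2-1)$, iteration \eqref{eq:sampling-initial-amp} has a state evolution tracking the pair $(\langle\bu^\ell,\theta\rangle/n,\|\bu^\ell\|^2/n)$. This pair is given up to a global sign $s\in\{\pm1\}$, and $s$ is determined by the eigenvector sign, which is independent of $\theta$. Since $\lambda>1$, the state evolution converges to the positive fixed point $q_0$ of $q=F(\lambda^2q)$. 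Hence, for any $\eta>0$, there is a fixed $L_{\rm init}$ such that with high probability the AMP output lies within $\eta\sqrt n$ of a point in the basin where their analysis applies.

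\textbf{Step 2: natural-gradient refinement and local convexity.} Next I invoke the theorem of \cite{celentano2023local} giving three facts on a high-probability event:
\begin{enumerate}[label=(\roman*)]
\item there are $c_0,R_0$ with $\nabla^2\Psi_0\succeq c_0\bI$ on $(-1,1)^n\cap B(s\bu^{L_{\rm init}},2R_0\sqrt n)$;
\item this ball contains a unique local minimizer $\bm_*$ of $\Psi_0$;
\item $\frac1n\|\bm_*\bm_*^\top-\bbE[\theta\theta^\top\mid\bJ]\|_F\to0$, which is the Bayes optimality of the TAP minimizer.
\end{enumerate}
The natural-gradient map $\bz\mapsto(1-\gamma)\bz+\gamma\{\bJ\tanh\bz-\lambda^2(1-\|\tanh\bz\|^2/n)\tanh\bz\}$ is a preconditioned gradient step for $\Psi_0$ in the dual variable $\bz=\operatorname{arctanh}\bu$. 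On the strongly convex region, and for $\gamma<\gamma_*$, this map is a contraction, with $\gamma_*$ determined by $c_0$ and $\|\bJ\|_{\mathrm{op}}\le\lambda^2+10\lambda$. Therefore $n$ steps bring $\tanh\bz$ to within $e^{-cn}\sqrt n$ of $\bm_*$. Clipping to $D_{[n],n}$ moves it by at most $2^{-n}\sqrt n$, so $\bv$ inherits the three limits: $\|\bv\|^2/n\to q_0$, $|\langle\theta,\bv\rangle|/n\to q_0$, and the matrix estimate. Here the absolute value absorbs the unknown sign $s$. I will then shrink the radius to $R_0$ so that $B(\bv,R_0\sqrt n)$ sits inside the convexity ball.

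\textbf{Step 3: the reflected ball.} Since $\Psi_0(-\bu)=\Psi_0(\bu)$, we have $\nabla^2\Psi_0(-\bu)=\nabla^2\Psi_0(\bu)$. The bound on $B(-\bv,R_0\sqrt n)$ therefore follows immediately from the bound on $B(\bv,R_0\sqrt n)$. No planted sign enters, because every step uses only $\bJ$ and a deterministic eigenvector sign.

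\textbf{Main obstacle.} I expect Step 2 to be the hardest, specifically verifying that the contraction of the natural-gradient iteration holds uniformly along the whole trajectory. This requires the iterates to stay in the convex ball rather than merely start there. I would handle it by a Lyapunov argument: $\Psi_0$ decreases monotonically for small $\gamma$, and a sublevel set of the restriction of $\Psi_0$ to the ball is enclosed by the ball's boundary, since the strict convexity gap exceeds the initial suboptimality $O(\eta^2 n)$.
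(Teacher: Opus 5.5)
The missing step is the proof of the two scalar limits $\|\bv\|^2/n\to q_0$ and $|\langle\theta,\bv\rangle|/n\to q_0$. You say $\bv$ ``inherits'' them from $\bm_*$, but nothing in your list (i)--(iii) provides them. Item (iii) is only the matrix estimate. Step~1 gives state-evolution limits only for the AMP iterate at a single fixed $L_{\rm init}$, and these differ from $q_0$ by an $L_{\rm init}$-dependent amount. Since you only know $\|\bu^{L_{\rm init}}-\bm_*\|\le\eta\sqrt n$ for a fixed $\eta$, you recover $q_0$ up to a fixed error, not in probability.

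The paper supplies the missing input from overlap concentration. With $\bB=\bbE[\theta\theta^\top\mid\bJ]$, one has $\|\bB\|_F^2/n^2=\langle R_{12}^2\rangle$ and $\theta^\top\bB\theta/n^2=\langle R_{1\theta}^2\rangle$. Lemma~\ref{lem:unpinned-overlap-input} at $r=\lambda^2$, together with Lemma~\ref{lem:nishimori-input}, sends both to $q_0^2$. The matrix estimate then transfers these limits to $\bv$ through $\bigl|\|\bv\|^2/n-\|\bB\|_F/n\bigr|\le\|\bv\bv^\top-\bB\|_F/n$ and $|\theta^\top(\bv\bv^\top-\bB)\theta|/n^2\le\|\bv\bv^\top-\bB\|_F/n$. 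Alternatively, your route could be completed by an estimate $\lim_{L\to\infty}\limsup_n\|\bu^L-\bm_*\|/\sqrt n=0$ for an $L$-independent $\bm_*$, combined with state evolution at every fixed $L$. Either way, some such input has to be invoked explicitly.

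The rest of your outline agrees with the paper: citing \cite{celentano2023local} for the minimizer, convexity and Bayes-optimality, absorbing the refinement and clipping errors by shrinking the radius, and using evenness of $\Psi_0$ for the reflected ball.

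Separately, you do not need to prove the step you flag as the main obstacle. Linear convergence of this natural-gradient iteration from the AMP initialization is \cite[Theorem~2.3]{celentano2023local}, which the paper cites, and its geometric rate makes $n$ steps enough. Your own sketch of it would need repair anyway. In $\bz$ coordinates the Jacobian of the update is $(1-\gamma)\bI+\gamma\boldsymbol{K}_{\bu}\operatorname{diag}(1-u_i^2)$, where $\boldsymbol{K}_{\bu}=\bJ-\lambda^2(1-\|\bu\|^2/n)\bI+2\lambda^2\bu\bu^\top/n$. This matrix is not symmetric and need not have Euclidean norm below one on the convex region, because the curvature there comes from the unbounded entropy Hessian. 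What actually holds is strong convexity and smoothness of $\Psi_0$ relative to the entropy, which gives linear convergence in Bregman divergence. Likewise, the initial suboptimality contains a Bernoulli relative entropy. It is bounded by $\sum_i(u_i-(\bm_*)_i)^2/(1-(\bm_*)_i^2)$, not by a multiple of $\|\bu-\bm_*\|^2$, so an $O(\eta^2n)$ bound does not follow from $\ell_2$ closeness alone.
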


\begin{proof}
The local minimizer and its matrix-estimation and convexity guarantees are \cite[Theorem~2.1]{celentano2023local}; convergence from the spectral initialization followed by natural-gradient descent is \cite[Theorem~2.3]{celentano2023local}. To match conventions, their normalized observation is $\bJ/\lambda$ and $\Psi_0$ is $n$ times their TAP free energy. Their spectral initialization is exactly \eqref{eq:sampling-initial-amp}, including $\bu^{-1}=\lambda\bz^0$. Their geometric convergence estimate makes $n$ refinement steps sufficient. Shrinking the convexity radius absorbs both the refinement error and the clipping error $\sqrt n\,2^{-n}=o(1)$.

For the two scalar limits, put $\bB=\bbE[\theta\theta^{\top}\mid\bJ]$. Lemma~\ref{lem:unpinned-overlap-input}, with $r=\lambda^2$, and Lemma~\ref{lem:nishimori-input} give $\|\bB\|_F^2/n^2\to q_0^2$ and $\theta^{\top}\bB\theta/n^2\to q_0^2$ in probability, by conditional Jensen. The matrix-estimation guarantee then gives $\|\bv\|^2/n\to q_0$ and $\langle\theta,\bv\rangle^2/n^2\to q_0^2$. This also explains why the planted-overlap assertion is stronger than simply choosing a sign for an arbitrary matrix estimator.
\end{proof}

The later checkpoints use AMP in the same Gaussian model with $r,t>0$.

\begin{lem}[Positive-side-channel TAP input]\label{lem:side-amp-input}
Fix $r,t>0$ and consider the Gaussian model defined above, with $q=F(rq+t)$. Define
\[
    \Phi(\bu)=\sum_{i=1}^N\phi(u_i)
       -\frac12\bu^{\top}\bA\bu-\bh^{\top}\bu
       +\frac{r(1-q)}2\|\bu\|^2,
       \qquad \bu\in(-1,1)^N.
\]
Initialize $\bz^0=\bh$, $\bu^{-1}=0$, and use
\[
    \bu^\ell=\tanh\bz^\ell,\qquad
    \bz^{\ell+1}=\bA\bu^\ell+\bh
       -r\left(1-\frac{\|\bu^\ell\|^2}{N}\right)\bu^{\ell-1}.
\]
If $s_0=t$ and $s_{\ell+1}=rF(s_\ell)+t$, then, for every fixed $L\ge0$,
\[
    \frac{\|\bu^L\|^2}{N}\xrightarrow{\bbP}F(s_L),\qquad
    \frac{\langle\theta,\bu^L\rangle}{N}\xrightarrow{\bbP}F(s_L),
    \qquad F(s_L)\uparrow q\quad\text{as }L\to\infty.
\]
There are $c,R>0$, depending only on $r,t$, such that, for each fixed $0<\varepsilon<R$ and every sufficiently large fixed $L$,
\[
    \bbP\left(\nabla^2\Phi(\bu)\succeq c\bI_N
       \text{ for every }\bu\in(-1,1)^N\cap
                    B(\bu^L,\varepsilon\sqrt N)\right)\longrightarrow1.
\]
The order of quantifiers is that $\varepsilon$ and then $L$ are fixed before $N\to\infty$.
\end{lem}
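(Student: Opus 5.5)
The statement to prove is Lemma~\ref{lem:side-amp-input}. It is essentially the positive-side-channel version of the AMP and local-convexity theory in \cite{celentano2024sudakov}, so the plan is to reduce to that theory after matching normalizations.

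First, rescale. The model $\bA=\frac rN\theta\theta^{\top}+\sqrt r\,\bW_N$ with $\bh=t\theta+\sqrt t\,\bg$ is the spiked Wigner model with a Gaussian side channel of signal-to-noise ratio $r$ and side-channel strength $t$. After dividing by $\sqrt r$, the AMP recursion displayed in the lemma is the Bayes-AMP of that reference. In particular, the Onsager term $r(1-\|\bu^\ell\|^2/N)\bu^{\ell-1}$ is its empirical replacement of $r\,\frac1N\sum_i\operatorname{sech}^2(z^\ell_i)$, since $\tanh'=1-\tanh^2$.

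Second, establish the state-evolution limits. The standard AMP state evolution for rank-one spiked matrices (Bolthausen conditioning, with the Onsager correction replaced by its empirical average at cost $o_{\bbP}(1)$, by continuity) gives
\[
    \bz^\ell\approx s_\ell\theta+\sqrt{s_\ell}\,\bg'.
\]
The initialization $\bz^0=\bh$ gives $s_0=t$ exactly, and the side channel adds $t$ at every step, giving $s_{\ell+1}=rF(s_\ell)+t$. Applying the Lipschitz test functions $\tanh^2$ and $\theta\tanh$ then yields
\[
    \frac{\|\bu^L\|^2}{N}\to F(s_L),\qquad \frac{\langle\theta,\bu^L\rangle}{N}\to F(s_L),
\]
where we use the Nishimori-type identity $\bbE\theta\tanh(s\theta+\sqrt s G)=\bbE\tanh^2(s\theta+\sqrt sG)$. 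Monotone convergence $F(s_L)\uparrow q$ follows from $s_0=t<rq+t$, because $F$ is increasing and concave with unique positive fixed point $q$ (Section~\ref{ss:prelim-lem}). By induction $s_\ell$ increases to $rq+t$.

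Third, prove local strong convexity. Write the Hessian as
\[
    \nabla^2\Phi(\bu)=\operatorname{diag}\bigl((1-u_i^2)^{-1}\bigr)-\bA+r(1-q)\bI.
\]
The cited theorem of \cite{celentano2024sudakov} (the arXiv Proposition~3 setting) asserts that, with probability tending to one, this is $\succeq c\bI$ on a ball of radius $R\sqrt N$ around the TAP/AMP fixed point. It also asserts that $\|\bu^L-\bu^*\|/\sqrt N$ is small once $L$ is large. Given $\varepsilon<R$, choose $L$ so that $\|\bu^L-\bu^*\|\le(R-\varepsilon)\sqrt N$ with high probability. The triangle inequality then gives $B(\bu^L,\varepsilon\sqrt N)\subset B(\bu^*,R\sqrt N)$, which respects the stated order of quantifiers.

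The main obstacle is this third step: checking that the cited convexity result covers every fixed $r,t>0$, including $r\le1$, and not only a restricted regime. If the reference only covers part of the range, the fallback is its Kac–Rice/Gordon comparison argument. That argument needs the scalar condition $rF'(rq+t)<1$ and $r(1-q)<1$, both recorded in Section~\ref{ss:prelim-lem} and valid for all $t>0$. It also needs the fact that $c$ and $R$ depend only on $(r,t)$ through this scalar gap.
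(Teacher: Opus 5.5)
Your proposal is correct and takes the paper's route: the paper proves this lemma only by specializing \cite[arXiv version, Proposition~3 and Theorem~2(i), AMS case]{celentano2024sudakov}, with signal strength $\sqrt r$, side-channel parameter $t$ and observation $\bA/\sqrt r$, and it notes $\Phi=N\mathcal F_{\mathrm{AMS}}+Nr(1-q)(1+q)/4$, so the Hessians agree exactly. The obstacle you flag is resolved there by the remark that the AMS case covers every $r,t>0$ with no $r<1$ restriction, and by identifying the reference's iterate $m^{L}$ with $\bu^L$, so the convexity ball is centered at the AMP iterate itself and neither your fallback nor the triangle-inequality detour through $\bu^*$ is needed.
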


This is a direct specialization of \cite[arXiv version, Proposition~3 and Theorem~2(i), AMS case]{celentano2024sudakov}. In that reference take the signal strength to be $\sqrt r$, the side-channel parameter to be $\gamma_0=t$, and the normalized observation to be $\bA/\sqrt r$. Its $q_\infty$ equals $q$, its iteration $m^{k-1}$ is our $\bu^L$ with $k=L+1$, and
\[
    \Phi(\bu)=N\mathcal F_{\mathrm{AMS}}(\bu)
                   +\frac{Nr(1-q)(1+q)}4.
\]
Thus the Hessian normalization is exactly the one stated here. The AMS case allows every $r>0$ and $t>0$; no assumption $r<1$ is needed. We use the theorem only at fixed positive side-channel strengths, not uniformly as $t\downarrow0$.

\begin{lem}[Positive-side-channel overlap input]\label{lem:overlap-concentration}
In the Gaussian model above, let $r\ge0$ and $t>0$. For every $\varepsilon>0$ there are $C_{r,t,\varepsilon},c_{r,t,\varepsilon}>0$ such that
\[
    \bbE\left\langle
       \bone_{\{|R_{1\theta}-q|>\varepsilon\}}\right\rangle
       \le C_{r,t,\varepsilon}e^{-c_{r,t,\varepsilon}N}.
\]
The same estimate holds with $R_{1\theta}$ replaced by $R_{12}$. For fixed $r_+<\infty$ and $0<t_-\le t_+<\infty$, the constants can be chosen uniformly over $0\le r\le r_+$ and $t_-\le t\le t_+$.
\end{lem}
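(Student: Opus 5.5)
The plan is to reduce the statement to an exponential-tilting estimate on the free energy, and then make every constant explicit enough to see that it depends continuously on $(r,t)$. By Lemma~\ref{lem:nishimori-input}, $R_{12}$ and $R_{1\theta}$ have the same law after averaging, so it suffices to treat $R_{1\theta}$. Since $t>0$ breaks the global sign symmetry, no absolute value is needed. For fixed $(r,t)$ the estimate is the positive-side-channel case of \cite[Proposition~13]{alaoui2017finite}, with their interpolation parameter chosen so that the Gaussian field has strength $t$. I would therefore cite it for fixed parameters. The real work is to re-derive its mechanism in a form that exposes the dependence of the constants on $(r,t)$.

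\textbf{Step 1 (tilted free energy).} For $s\in[-1,1]$ set $\varphi_N(s)=\frac1N\log\sum_\sigma\exp\{\frac12\sigma^{\top}\bA\sigma+\bh^{\top}\sigma+s\langle\theta,\sigma\rangle\}$. Then $\langle e^{sNR_{1\theta}}\rangle=e^{N(\varphi_N(s)-\varphi_N(0))}$. Markov's inequality gives
\[
\langle\bone_{\{R_{1\theta}>q+\varepsilon\}}\rangle\le\exp\{-N[s(q+\varepsilon)-\varphi_N(s)+\varphi_N(0)]\}
\]
for $s>0$, and symmetrically for the lower tail with $s<0$.

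Next I would control the fluctuations of $\varphi_N$. The map $(\bW_N,\bg)\mapsto\varphi_N(s)$ is Lipschitz with constant $O((\sqrt r+\sqrt t)/\sqrt N)$. Gaussian concentration then gives $|\varphi_N(s)-\bbE\varphi_N(s)|\le\eta$ except on an event of probability $2e^{-cN\eta^2}$, with $c$ uniform over $r\le r_+$ and $t\le t_+$. The Boolean average over $\theta$ is handled by bounded differences, or by the gauge $\sigma\mapsto\theta\sigma$, which reduces everything to $\theta=\bone$.

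\textbf{Step 2 (limit and derivative at $s=0$).} Let $\psi(s)=\lim\bbE\varphi_N(s)$. I would show that $\psi$ is convex and differentiable at $0$ with $\psi'(0)=q$. Convexity of each $\varphi_N$ gives convexity of $\psi$. Differentiability would be derived from the fact that at $s=0$ the model is Bayes-optimal. There $\frac{\mathrm d}{\mathrm dt}\bbE\varphi_N=\frac12(1+\bbE\langle R_{1\theta}\rangle)$ by Gaussian integration by parts and Nishimori. The replica-symmetric formula for $\lim\bbE\varphi_N(0)$ (Guerra upper bound plus the Aizenman--Sims--Starr or adaptive-interpolation lower bound, as in \cite{alaoui2017finite}) is differentiable in $t>0$ with derivative $\frac12(1+q)$.

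A perturbation in $s$ is not a perturbation in $t$, so I would compare the two. Adding $s\theta$ to $\bh$ equals a shift of the channel strength plus a deterministic bias. Griffiths-type convexity in $s$, together with the identity $\varphi_N'(0)=\langle R_{1\theta}\rangle$ and the concentration of $\bbE\langle R_{1\theta}\rangle\to q$ from the $t$-derivative argument, pins down the left and right derivatives of $\psi$ at $0$ as both equal to $q$. Choosing $s=s(\varepsilon)$ with $\psi(s)-\psi(0)\le s(q+\varepsilon/2)$ and combining with Step 1 then yields $Ce^{-cN}$. Here $c\approx s\varepsilon/4$, and the deviation of $\bbE\varphi_N$ from $\psi$ is absorbed for $N$ large, with small $N$ handled by enlarging $C$.

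\textbf{Step 3 (uniformity).} This is the main obstacle. The choice $s(\varepsilon)$ depends on a modulus of differentiability of $\psi$ at $0$, and the convergence $\bbE\varphi_N\to\psi$ must be uniform. First I would note that $\bbE\varphi_N(s)$ is Lipschitz in $(r,t,s)$ with a constant independent of $N$: its $r$- and $t$-derivatives are averages of overlaps bounded by $1$. The RS limits are likewise jointly continuous, and $q$ is continuous in $(r,t)$ on $r\le r_+$, $t\ge t_-$ by the implicit-function argument of Section~\ref{ss:prelim-lem}; this is where $t_->0$ is used. Interpolation error bounds are uniform on compacts, so $\bbE\varphi_N\to\psi$ uniformly on $[0,r_+]\times[t_-,t_+]\times[-1,1]$. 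A family of convex functions that is equicontinuous in the parameters and differentiable at $0$ with a continuous derivative has a uniform modulus of differentiability there, by Dini's theorem applied to the monotone difference quotients. I would then cover the compact parameter set by a finite grid and fix one $s(\varepsilon)$ working on each cell. This gives constants $C,c$ independent of $(r,t)$ in the range. The endpoint $r=0$ needs no special care, since the model is then a product of scalar channels and the bound is Hoeffding.
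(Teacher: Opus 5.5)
Your reduction from $R_{12}$ to $R_{1\theta}$ and the fixed-parameter citation agree with the paper. The step carrying your new content, Step~2, does not work.

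To make the tilting bound effective you need $D^+\psi(0)\le q$ and $D^-\psi(0)\ge q$. The inputs you list give the reverse inequalities. Convexity and $\varphi_N'(0)=\langle R_{1\theta}\rangle$ yield $\bbE\varphi_N(s)\ge\bbE\varphi_N(0)+s\,\bbE\langle R_{1\theta}\rangle$ for every $s$. So $\bbE\langle R_{1\theta}\rangle\to q$ only shows $D^-\psi(0)\le q\le D^+\psi(0)$, i.e.\ that $q$ is a subgradient.

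Griffiths' lemma passes derivatives to a limit only once the limit is known to be differentiable; it cannot create differentiability, as $\sqrt{s^2+1/N}\to|s|$ shows. Differentiability of the tilted limit at $s=0$ is essentially equivalent to the two-sided exponential tail you are trying to prove. For $s\neq0$ the field $(t+s)\theta+\sqrt t\,\bg$ is a mismatched channel. The Nishimori identity, the $t$-derivative formula, and any replica-symmetric formula for $\psi(s)$ are therefore unavailable there.

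The mechanism behind the cited Proposition~13 is an interpolation upper bound on the free energy restricted to overlaps away from $q$. That is exactly what your sketch does not supply. The same problem affects Step~3: its appeals to jointly continuous RS limits and uniform interpolation errors at $s\neq0$ refer to results that are not established.

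You could salvage Step~3 by extracting differentiability from the cited pointwise estimate itself. By Jensen,
$\bbE[\varphi_N(s)-\varphi_N(0)]\le N^{-1}\log\bigl(e^{sN(q+\varepsilon)}+Ce^{(s-c)N}\bigr)$.
Hence $\bar\psi=\limsup_N\bbE\varphi_N$ satisfies $D^+\bar\psi(0)\le q$, and symmetrically $D^-\bar\psi(0)\ge q$; your equicontinuity and Dini argument can then be run.

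The paper avoids free-energy asymptotics altogether:
\begin{itemize}
\item It couples two parameter pairs through the same $(\theta,\bW_N,\bg)$.
\item On $\{\|\bW_N\|_{\mathrm{op}}\le10,\ \|\bg\|\le2\sqrt N\}$, whose complement has probability $O(e^{-cN})$, the two Gibbs exponents differ uniformly in $\sigma$ by at most $\eta N$. Here $\eta$ is controlled by $|r-r'|$, $|\sqrt r-\sqrt{r'}|$, $|t-t'|$ and $|\sqrt t-\sqrt{t'}|$, so the normalized Gibbs densities differ by a factor at most $e^{2\eta N}$.
\item The tolerance-$\varepsilon/2$ bound at a center therefore transfers to every nearby pair with $|q-q'|<\varepsilon/2$ and $2\eta$ below half the rate.
\item A finite cover of $[0,r_+]\times[t_-,t_+]$ finishes the argument. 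The condition $t_->0$ is needed only because the signed rate in Proposition~13 may degenerate as $\tau\uparrow1$.
\end{itemize}

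Finally, the pointwise citation requires fixing both parameters of that reference, not just the interpolation parameter: $\Lambda=r+t/q$ and $\tau=r/\Lambda$. Then $\tau\Lambda=r$, $(1-\tau)\Lambda q=t$ and $q=F(\Lambda q)$. You must also check that $\Lambda>1$, which follows from $F(u)<u$, and that the regularity hypothesis of that proposition holds.
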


The pointwise statement is \cite[Proposition~13]{alaoui2017finite}. To match its notation, set
\[
    \Lambda=r+\frac tq,\qquad \tau=\frac r\Lambda.
\]
Their signal parameter is $\Lambda$ and their interpolation parameter is $\tau$; our side-channel strength $t$ is not their interpolation parameter. Indeed,
\[
    q=F(\Lambda q),\qquad
    \tau\Lambda=r,\qquad (1-\tau)\Lambda q=t,
\]
so their Gaussian model in \cite[equations~(22)--(23)]{alaoui2017finite} has exactly the same posterior. Since $F(u)<u$ for $u>0$, we have $\Lambda>1$. The scalar facts above imply that the positive fixed point is the unique maximizer of the Rademacher replica-symmetric potential and varies smoothly for $\Lambda>1$. Consequently the twice-differentiability hypothesis in that reference is satisfied. The case $r=0$ corresponds to $\tau=0$ and is already included in Proposition~13. Thus no separate concentration theorem is needed at the product-measure endpoint. Lemma~\ref{lem:nishimori-input} gives the replica formulation. The signed exponential rate in Proposition~13 may tend to zero as $\tau\uparrow1$, which is why the uniform assertion here keeps $t$ bounded away from zero.

We justify the compact-parameter uniformity in both overlap lemmas without repeating their concentration proofs. We use the Gaussian norm estimates
\[
    \bbP(\|\bW_N\|_{\mathrm{op}}>10)
       +\bbP(\|\bg\|>2\sqrt N)\le Ce^{-cN},
\]
which follow from \cite[Corollary~5.35 and Proposition~5.34]{vershynin2012introduction}; for the matrix bound, write $\bW_N=(\bG+\bG^{\top})/\sqrt{2N}$ with independent standard Gaussian entries in $\bG$. Couple two parameter pairs $(r,t)$ and $(r',t')$ using the same $\theta,\bW_N,\bg$, and write $H_{r,t}(\sigma)=\frac12\sigma^{\top}\bA\sigma+\bh^{\top}\sigma$ for their Gibbs exponents. On the displayed norm event,
\[
\begin{split}
    \frac1N\sup_{\sigma\in\{-1,1\}^N}
        |H_{r,t}(\sigma)-H_{r',t'}(\sigma)|
    \le {}&\frac12|r-r'|+5|\sqrt r-\sqrt{r'}|\\
       &+|t-t'|+2|\sqrt t-\sqrt{t'}|.
\end{split}
\]
If the right-hand side is at most $\eta$, the ratio of the corresponding normalized Gibbs densities is at most $e^{2\eta N}$. The overlap with the common planted vector is the same function of $\sigma$ in the two models. Continuity of the positive fixed point therefore lets us transfer the exponential estimate at a fixed parameter pair, with tolerance $\varepsilon/2$, to all sufficiently close parameter pairs with tolerance $\varepsilon$. Choose the neighborhood so that the change in $q$ is less than $\varepsilon/2$ and $2\eta$ is less than half the exponential rate at its center. The complement of the norm event contributes at most $Ce^{-cN}$. A finite cover gives the uniformity in Lemma~\ref{lem:overlap-concentration}, including $r=0$. The same argument with $t=t'=0$ gives the uniformity in Lemma~\ref{lem:unpinned-overlap-input}. In each comparison the expectations are taken through the common planted/noise coupling, so both coupled models have their correct planted laws.

The last input is the cavity expansion before overlap concentration is used to control its remainder. This form is useful because its constants remain bounded when the side channel vanishes.

\begin{lem}[Finite-dimensional cavity moment input]\label{lem:cavity-moment-input}
Consider the Gaussian model above with $q>0$, and suppose that $\Lambda=r+t/q$ belongs to a fixed compact subinterval of $(1,\infty)$. Set
\[
    p=\bbE\tanh^4(rq+t+\sqrt{rq+t}\,G),
    \qquad G\sim\cN(0,1),
\]
and define
\[
    \bz=
    \begin{pmatrix}
       \bbE\langle(R_{1\theta}-q)^2\rangle\\
       \bbE\langle(R_{1\theta}-q)(R_{2\theta}-q)\rangle\\
       \bbE\langle(R_{1\theta}-q)(R_{23}-q)\rangle
    \end{pmatrix},
\]
\[
    \bC_{\rm cav}=r
    \begin{pmatrix}
      1-q^2&-2(q-q^2)&p-q^2\\
      q-q^2&1-q-2p+2q^2&-2q+3p-q^2\\
      p-q^2&4q-6p+2q^2&1-6q+6p-q^2
    \end{pmatrix}.
\]
There is a remainder vector $\boldsymbol\delta\in\bbR^3$ such that
\begin{equation}\label{eq:eakj-cavity-system}
    (\bI-\bC_{\rm cav})\bz
       =\frac1N
           \begin{pmatrix}1-q^2\\q-q^2\\p-q^2\end{pmatrix}
          +\boldsymbol\delta,
\end{equation}
and
\begin{equation}\label{eq:eakj-cavity-remainder}
    \|\boldsymbol\delta\|_{\infty}
       \le C\Big\{ \bbE\langle|R_{12}-q|^3\rangle
           +\frac1N\bbE\langle(R_{12}-q)^2\rangle
           +\frac1N\bbE\langle|R_{12}-q|\rangle
           +\frac1{N^2}\Big\}.
\end{equation}
Here $C$ depends only on the compact interval for $\Lambda$, uniformly in $N$ and in $\tau=r/\Lambda\in[0,1]$. The eigenvalues of $\bC_{\rm cav}$ are $r(1-2q+p)$, with algebraic multiplicity two, and $r(1-3q+2p)$.
\end{lem}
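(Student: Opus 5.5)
The plan is to reproduce the second-order cavity computation of \cite{alaoui2017finite} in the notation of the Gaussian model above. It is used there to derive finite-size corrections to the overlap. The only new point is to track that every constant depends on $(r,t)$ through bounded quantities. First, by exchangeability of sites under the planted law, each entry of $\bz$ is rewritten by singling out the last coordinate. For example,
\[
    \bbE\langle(R_{1\theta}-q)^2\rangle
    =\bbE\langle(\sigma^1_N\theta_N-q)(R_{1\theta}-q)\rangle ,
\]
and analogously for the other two entries. Then $R_{1\theta}$ and $R_{12}$ are split as $R^-+\frac1N\sigma_N\theta_N$ (respectively $\frac1N\sigma_N^1\sigma_N^2$), where $R^-$ is the overlap on the first $N-1$ coordinates. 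This split produces the explicit $\frac1N(1-q^2,\,q-q^2,\,p-q^2)^{\top}$ source term, up to $O(N^{-2})$ and $N^{-1}\bbE\langle|R_{12}-q|\rangle$ corrections.

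Second, I would introduce the standard cavity interpolation $s\in[0,1]$ for the last spin. The coupling of $\sigma_N$ to the remaining spins, $\sigma_N\sum_{i<N}A_{iN}\sigma_i$, is replaced by $\sqrt{s}$ times itself plus $\sqrt{1-s}$ times an independent Gaussian field of variance $rq+t$ with planted mean, and the rank-one part is scaled by $s$ as well. At $s=0$ the last spin decouples. The averages of $\sigma_N^a\theta_N-q$ against functions of the cavity overlaps then vanish, because $\bbE\tanh(\cdot)\theta=\bbE\tanh^2=q$ by the Nishimori identity (Lemma~\ref{lem:nishimori-input}) applied to the scalar channel. The $s$-derivative is computed by Gaussian integration by parts. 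It is a finite linear combination, over replica pairs among at most five replicas, of terms
\[
    \bbE\langle f(\sigma_N)\,(R^-_{ab}-q)(\text{cavity overlap}-q)\rangle_s ,
\]
with $f$ a bounded polynomial in last-spin variables. Expanding this derivative once more around $s=0$ (a second-order Taylor bound) freezes the last-spin factors at their decoupled values, which are exactly the moments $1,q,p$ of $\tanh(rq+t+\sqrt{rq+t}G)$. What remains is $r$ times linear combinations of the three entries of $\bz$, plus third-order overlap moments. The symmetries reduce every second-order overlap moment to one of the three entries of $\bz$, via Lemma~\ref{lem:nishimori-input} and replica relabeling, for instance $\bbE\langle(R_{12}-q)^2\rangle=\bbE\langle(R_{1\theta}-q)^2\rangle$. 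Collecting coefficients gives the matrix $\bC_{\rm cav}$. The remainder, consisting of third-order terms in $R_{ab}-q$ and the $1/N$ corrections from replacing $R$ by $R^-$, is bounded as in \eqref{eq:eakj-cavity-remainder}. Here Hölder's inequality and Nishimori are used to convert mixed moments into moments of $R_{12}-q$.

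Third, uniformity. All constants arising are polynomials in $r$, $t$, and derivatives of $\tanh$, together with Gaussian moment bounds for fields of variance $rq+t$. Since $r=\tau\Lambda\le\Lambda$ and $t=(1-\tau)\Lambda q\le\Lambda$, these are bounded uniformly over $\tau\in[0,1]$ and $\Lambda$ in the compact interval; no lower bound on $t$ is used. Finally, the eigenvalue claim is a direct $3\times3$ computation. I would exhibit the characteristic polynomial of $\bC_{\rm cav}/r$ as $(x-(1-2q+p))^2(x-(1-3q+2p))$, checking the trace and one explicit eigenvector.

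The main obstacle is the bookkeeping in the second step. One must verify that the Gaussian integration by parts in the planted model, where the rank-one term contributes extra mean terms, produces precisely the stated coefficients, in particular the asymmetric entries such as $4q-6p+2q^2$. One must also check that the remainder contains no term of order $\bbE\langle(R_{12}-q)^2\rangle$ without a $1/N$ prefactor. I would organize this by first doing the calculation at $\tau=1$, the pure spiked case treated in \cite{alaoui2017finite}, then adding the side-channel field, which only shifts the effective field variance to $rq+t$.
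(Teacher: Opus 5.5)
Your plan is essentially the paper's argument. The paper does not redo the computation: it identifies the model with the interpolated model of \cite{alaoui2017finite} via $\Lambda=r+t/q$, $\tau=r/\Lambda$, and cites that paper's second-order cavity system (Sections~5.2--5.3). It obtains uniformity exactly as you do, from the fact that the interpolation and Taylor constants depend only on an upper bound for $\Lambda$ and on no lower bound for $t$.

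When you write it out, fix two details. First, make the last-spin interpolation Nishimori-consistent: replace the couplings by the scalar-channel term $(1-s)rq\,\theta_N\sigma_N+\sqrt{(1-s)rq}\,g\,\sigma_N$ and keep the side field $t\theta_N+\sqrt t\,g_N$ for all $s$, rather than using $\sqrt{1-s}$ times a field of variance $rq+t$. Second, the trace plus one eigenvector (e.g.\ $(2,3,4)^{\top}$ for $r(1-3q+2p)$) does not yet force the double eigenvalue, so you need one more invariant, such as the determinant.
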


This is the Rademacher specialization of \cite[Sections~5.2--5.3, equations~(40)--(41), (46), and~(48)]{alaoui2017finite}. In their notation the three scalar coefficients are $a(0)=1-q^2$, $a(1)=q-q^2$, and $a(2)=p-q^2$; their matrix multiplied by $\tau$ is our $\bC_{\rm cav}$. The parameter identification is the same as above. Lemma~\ref{lem:nishimori-input} permits the use of $R_{12}$ in the remainder and identifies the first component of $\bz$ with $\bbE\langle(R_{12}-q)^2\rangle$. The interpolation comparisons and Taylor bounds in \cite[Lemmas~15--17]{alaoui2017finite} have constants depending only on an upper bound for $\Lambda$, the bounded prior support, and the fixed number of replicas. This gives the uniformity asserted here, including $\tau=1$. We use the cited finite-dimensional identities directly; no signed-overlap concentration is assumed in this input.

\subsection{Strong convexity along the revealing path}\label{ss:checkpoint-construction}

In this subsection, we prove Lemma~\ref{lem:convexity-barycenter} by choosing a valid tuning record for the constraint sets $\cK_k$ specified in Section~\ref{ss:conditional-tap} and establishing their simultaneous guarantees. All probabilities in this proof are under the joint planted law, including the independent revealing order. 

We begin by identifying the conditional model at a deterministic checkpoint and recording its posterior-mean limit. By Lemma~\ref{lem:overlap-concentration}, the overlap of two independent posterior replicas concentrates at $q$ in the fixed positive-side-channel model. Conditional Jensen's inequality and boundedness then imply
\begin{equation}\label{eq:fixed-side-posterior-norm}
    \frac{\|\bm\|^2}{N} \overset{L^2}{\longrightarrow} q.
\end{equation}
Indeed, the left-hand side is the conditional expectation of the overlap of two independent posterior replicas. Combining \eqref{eq:fixed-side-posterior-norm} with Lemma~\ref{lem:side-amp-input}, for each fixed AMP count $L$ we also obtain (for the cross term below, use $\langle\bu^L,\bm\rangle/N=\bbE[\langle\bu^L,\theta\rangle/N\mid\bA,\bh]$)
\begin{equation}\label{eq:fixed-side-amp-mean-distance}
    \frac{\|\bu^L-\bm\|^2}{N} \overset{\bbP}{\longrightarrow} q-F(s_L).
\end{equation}
The state-evolution limit of the variable inside this conditional expectation also holds in $L^2$, because it is bounded. At a deterministic checkpoint $j$, conditional on $I$ and $\theta_{S_j}$, the pair $(\bJ_{R_j,R_j},\bh^{(j)}(\theta_{S_j}))$ has the law of $(\bA,\bh)$ with $N=n-j$, $r=r_{j/n}$, and $t=t_{j/n}=\lambda^2j/n$. In fact,
\[
    h_i^{(j)}(\theta_{S_j})=\lambda^2\frac jn\theta_i
       +\lambda\sum_{a=1}^jW_{i,I_a}\theta_{I_a},\qquad i\in R_j.
\]
The Gaussian sums have variance $\lambda^2j/n$, are independent across $i$, and are independent of the GOE block. Moreover, the conditional Gibbs density depends on the observed data only through this block and this field. Thus the conditional mean appearing in \eqref{eq:fixed-side-posterior-norm} is precisely $\bm^{(j)}(\theta_{S_j})$. 
We also justify the rounding $j=\lfloor\alpha n\rfloor$ for a fixed $\alpha\in(0,1)$. Couple the parameters $(r_{j/n},t_{j/n})$ and $(r_\alpha,t_\alpha)$ using the same $\theta,\bW_N,\bg$. On $\|\bW_N\|_{\mathrm{op}}\le10$ and $\|\bg\|\le2\sqrt N$, the matrix difference has operator norm $O(n^{-1})$ and the field difference has norm $O(\sqrt N/n)$. Consequently the two Hamiltonians differ by $O(N/n)=O(1)$ uniformly over the spin configurations, and the ratio of their normalized Gibbs densities is bounded by a constant. The complement of this norm event has probability tending to zero. The fixed-parameter overlap concentration therefore transfers to the rounded model, and $q_{j/n}-q_\alpha\to0$. For every fixed number of AMP updates, the coupled iterates differ by $O(\sqrt N/n)$ on the same norm event, by induction in the recursion using the Lipschitz property of $\tanh$. Hence both \eqref{eq:fixed-side-posterior-norm} and \eqref{eq:fixed-side-amp-mean-distance} apply at rounded checkpoints. For the rest of the proof, extend the conditional mean to all coordinates by setting, for any prefix,
\[
    \overline m_i^{(k)}(x_{S_k})=
    \begin{cases} 
        x_i,&i\in S_k,\\ 
        m_i^{(k)}(x_{S_k}),&i\in R_k.
    \end{cases}
\]
Along the target path, $\overline{\bm}^{(k)}(\theta_{S_k})=\bbE[\theta\mid\cH_k]$. Conditioning additionally on the unused part of the independent permutation does not change this posterior mean. Thus $(\overline{\bm}^{(k)}(\theta_{S_k}))_{k=0}^n$ is a vector-valued martingale, and for every fixed $\alpha\in(0,1)$,
\begin{equation}\label{eq:fixed-prefix-mean-norm}
    \frac{\|\overline{\bm}^{(\lfloor\alpha n\rfloor)}(\theta_{S_{\lfloor\alpha n\rfloor}})\|^2}{n} \overset{L^2}{\longrightarrow} \bar q_\alpha.
\end{equation}

To pass from deterministic checkpoints to the whole revealing path, we use the following maximal estimate, an elementary consequence of the posterior-mean martingale. Let $p=p_n\le h=h_n$ be deterministic indices and let $\bw\in[-1,1]^n$ be $\cH_p$-measurable. Suppose, for deterministic constants $c,d$, that
\[
    \frac{\|\bw\|^2}{n} \overset{L^2}{\longrightarrow} c,\qquad \frac{\langle\theta,\bw\rangle}{n} \overset{L^2}{\longrightarrow} c, \qquad \frac{\|\overline{\bm}^{(h)}(\theta_{S_h})\|^2}{n} \overset{L^2}{\longrightarrow} d.
\]
Then
\begin{equation}\label{eq:checkpoint-maximal-distance}
    \max_{p\le k\le h} \frac{\|\overline{\bm}^{(k)}(\theta_{S_k})-\bw\|^2}{n} \le d-c+o_{\bbP}(1).
\end{equation}
To prove this, conditional Jensen gives $\|\overline{\bm}^{(k)}(\theta_{S_k})\|^2/n \le\bbE[\|\overline{\bm}^{(h)}(\theta_{S_h})\|^2/n\mid\cH_k]$ for $k\le h$. Since $\bw$ is $\cH_p$-measurable, $\langle\overline{\bm}^{(k)}(\theta_{S_k}),\bw\rangle/n =\bbE[\langle\theta,\bw\rangle/n\mid\cH_k]$ for $k\ge p$. Doob's $L^2$ maximal inequality yields
\[
\begin{split}
    \bbE\max_{0\le k\le h} \left|\bbE\left[\frac{\|\overline{\bm}^{(h)}(\theta_{S_h})\|^2}{n}-d \mid \cH_k\right]\right|^2 &\le4\bbE\left|\frac{\|\overline{\bm}^{(h)}(\theta_{S_h})\|^2}{n}-d \right|^2=o(1),\\
    \bbE\max_{p\le k\le n} \left|\bbE\left[\frac{\langle\theta,\bw\rangle}{n}-c \mid \cH_k\right]\right|^2 &\le4\bbE\left| \frac{\langle\theta,\bw\rangle}{n}-c \right|^2=o(1).
\end{split}
\]
Expanding $\|\overline{\bm}^{(k)}(\theta_{S_k})-\bw\|^2$ proves
\eqref{eq:checkpoint-maximal-distance}. This estimate is uniform over every time within the interval, without a union bound over those times.


\begin{proof}[Proof of Lemma~\ref{lem:convexity-barycenter}]
We choose the tuning record in the course of the proof; every parameter and every AMP iteration count is then held fixed as $n\to\infty$. We divide the proof into four steps.

\textbf{Step 1: Choose $L_{\rm init},\rho_0,\alpha_0$ and deal with $k_0 \le k \le j_0$.} We apply the estimate \eqref{eq:checkpoint-maximal-distance} to the initial center and the interval before the first checkpoint. Choose $L_{\rm init}$ and $\gamma$ according to Lemma~\ref{lem:unpinned-tap-input}, and let $\bv$ be its stored vector. By that lemma, $\|\bv\|^2/n\to q_0$ and $|\langle\theta,\bv\rangle|/n\to q_0$ in probability. Conditional on $(\bJ,\theta,\bv)$, the first $k_0$ indices form a uniform sample without replacement. On $|\langle\theta, \bv\rangle|/n \ge q_0/2$, the sample average of $\operatorname{sign}(\langle\theta,\bv\rangle)\theta_i v_i$ has mean at least $q_0/2$ and variance at most $1/k_0$. Therefore the probability of a wrong or zero phase orientation is at most $4/(k_0q_0^2)$. Since $k_0\to\infty$, the prefix-measurable orientation in \eqref{eq:sampling-phase-orientation} satisfies
\begin{equation}\label{eq:oriented-initial-center-limits}
    \frac{\|\bv^+(\theta_{S_{k_0}})\|^2}{n} \overset{L^2}{\longrightarrow} q_0, \qquad \frac{\langle\theta,\bv^+(\theta_{S_{k_0}})\rangle}{n} \overset{L^2}{\longrightarrow} q_0.
\end{equation}
Here boundedness upgrades the convergence in probability to $L^2$. For this comparison, define the unpinned objective
\[
    \Phi_0(\bu)=\sum_{i=1}^n\phi(u_i)
       -\frac12\bu^\top\bJ\bu+\frac{a_0}{2}\|\bu\|^2,
       \qquad \bu\in(-1,1)^n.
\]
It is the extension of \eqref{eq:def-Phi-k} to no revealed coordinates and has no prefix argument.
Recall \eqref{eq:def-Psi-0}. Direct differentiation gives
\[
    \nabla^2\Phi_0(\bu)-\nabla^2\Psi_0(\bu) =\lambda^2\left(\frac{\|\bu\|^2}{n}-q_0\right)\bI +\frac{2\lambda^2}{n}\bu\bu^{\top}.
\]
The rank-one term is positive semidefinite. On
$B(\bv^+(\theta_{S_{k_0}}),2\rho_0\sqrt n)\cap(-1,1)^n$,
\[
    \left|\frac{\|\bu\|^2}{n}-q_0\right| \le4\rho_0+ \left|\frac{\|\bv^+(\theta_{S_{k_0}})\|^2}{n}-q_0\right|.
\]
Choose a fixed $\rho_0>0$ so that $2\rho_0<R_0$ and
$4\lambda^2\rho_0<c_0/4$, where $c_0,R_0$ are from
Lemma~\ref{lem:unpinned-tap-input}. That lemma and
\eqref{eq:oriented-initial-center-limits} imply, with probability tending to one,
\[
    \nabla^2\Phi_0(\bu)\succeq\frac{c_0}{2}\bI \quad\text{for every }\bu\in B(\bv^+(\theta_{S_{k_0}}),2\rho_0\sqrt n)\cap(-1,1)^n.
\]
Choose a fixed $\alpha_0>0$ small enough that (recall the definition of $L_\lambda$ in \eqref{eq:scalar-checkpoint-lipschitz})
\[
    L_\lambda\alpha_0<\rho_0^2/16, \qquad L_\lambda\alpha_0<c_0/4,
\]
and put $j_0=\lfloor\alpha_0n\rfloor$. Apply \eqref{eq:checkpoint-maximal-distance} with $p=k_0$, $h=j_0$, $\bw=\bv^+(\theta_{S_{k_0}})$, $c=q_0$, and $d=\bar q_{\alpha_0}$. Its hypotheses follow from \eqref{eq:fixed-prefix-mean-norm} and \eqref{eq:oriented-initial-center-limits}. Restricting the resulting distances to $R_k$ gives, with probability tending to one,
\[
    \max_{k_0\le k\le j_0} \frac{\|\bm^{(k)}(\theta_{S_k})-\bv^+(\theta_{S_{k_0}})|_{R_k}\|^2}{n} \le L_\lambda\alpha_0+o_{\bbP}(1)<\rho_0^2/4.
\]
Thus the first ball contains the conditional mean simultaneously throughout the initial interval. 

\textbf{Step 2: Choose $M,\rho,\kappa_\lambda,\alpha_{\ell+1}$ for $0 \le \ell \le M-1$.} We first fix a terminal fraction $\alpha_M\in(\alpha_0,1)$ sufficiently close to one that
\[
    \lambda^2(1-\alpha_M)+10\lambda\sqrt{1-\alpha_M}<1/4.
\]
At this point $\alpha_M$ denotes the terminal fraction; its index $M$, the number of checkpoint intervals, will be chosen at the end of this step. This choice will give whole-cube convexity on the final suffix in Step~4. On the intermediate interval $[\alpha_0,\alpha_M]$, both $r_\alpha=\lambda^2(1-\alpha)$ and $t_\alpha=\lambda^2\alpha$ are bounded away from zero. We will first obtain a common convexity radius around the exact posterior means on this interval, and only then choose the mesh and the final AMP iteration counts.

For the parameter comparison, couple the positive-side-channel models in a common dimension $N$, using the same independent $\theta,\bW_N,\bg$ as in Lemma~\ref{lem:side-amp-input}. Displaying the fraction parameter explicitly, set
\[
    \bA_\beta=\frac{r_\beta}{N}\theta\theta^{\top}
                    +\sqrt{r_\beta}\,\bW_N,
    \qquad
    \bh_\beta=t_\beta\theta+\sqrt{t_\beta}\,\bg,
    \qquad
    \bm_\beta=\bbE[\theta\mid\bA_\beta,\bh_\beta].
\]
Thus $\bm_\beta$ is the conditional expectation with respect to $\sigma(\bA_\beta,\bh_\beta)$, not with respect to the observations from all the coupled models. Write
\[
    \Phi_\beta(\bu)=\sum_{i=1}^N\phi(u_i)
        -\frac12\bu^{\top}\bA_\beta\bu-\bh_\beta^{\top}\bu
        +\frac{a_\beta}{2}\|\bu\|^2,
        \qquad \bu\in(-1,1)^N,
\]
and let $\bu_\beta^L=\tanh\bz_\beta^L$ denote its AMP iterate, initialized with $\bz_\beta^0=\bh_\beta$ and $\bu_\beta^{-1}=0$. Its scalar recursion is
\[
    s_0(\beta)=t_\beta,\qquad s_{L+1}(\beta)=r_\beta F(s_L(\beta))+t_\beta.
\]
This is an auxiliary coupling in a fixed dimension, not a comparison of two actual free-coordinate sets of different sizes. All parameter comparisons below take place in a fixed compact subinterval of $(0,1)$ whose interior contains $[\alpha_0,\alpha_M]$. We claim that, for every $\alpha\in[\alpha_0,\alpha_M]$, there are constants $c_\alpha,\varepsilon_\alpha,\delta_\alpha>0$ and a fixed integer $L_\alpha\ge1$ such that, for each fixed $\beta\in(\alpha-\delta_\alpha,\alpha+\delta_\alpha)$, the following properties hold jointly with probability tending to one as $N\to\infty$:
\begin{enumerate}
    \item[(I)] The Hessian satisfies $\nabla^2\Phi_\beta(\bu)\succeq\frac{c_\alpha}{2}\bI_N$ for every $\bu\in(-1,1)^N \cap B(\bu_\beta^{L_\alpha},6\varepsilon_\alpha\sqrt N)$.
    \item[(II)] The AMP iterate satisfies $\|\bu_\beta^{L_\alpha}-\bm_\beta\| <\frac{\varepsilon_\alpha}{4}\sqrt N$.
\end{enumerate}
Here the parameter neighborhood is chosen inside $(0,1)$. The constants and the iteration count are common to the neighborhood, but the probability statement is for each fixed $\beta$; we do not assert a simultaneous state-evolution estimate over a continuum of fractions. To prove the claim, fix a fraction $\alpha$. Lemma~\ref{lem:side-amp-input} gives constants $c_\alpha,R_\alpha>0$ such that, for each fixed radius smaller than $R_\alpha$, the local-convexity conclusion holds for every sufficiently large fixed AMP count. Choose $\varepsilon_\alpha>0$ with $8\varepsilon_\alpha<R_\alpha$. By \eqref{eq:fixed-side-amp-mean-distance},
\[
    \frac{\|\bu_\alpha^L-\bm_\alpha\|^2}{N} \overset{\bbP}{\longrightarrow} q_\alpha-F(s_L(\alpha)), \qquad F(s_L(\alpha))\uparrow q_\alpha\quad(L\to\infty).
\]
We may therefore fix $L_\alpha$ sufficiently large that the local-convexity conclusion applies at radius $8\varepsilon_\alpha\sqrt N$ and
\[
    q_\alpha-F(s_{L_\alpha}(\alpha)) <\varepsilon_\alpha^2/256.
\]
With probability tending to one, we then have both
\[
    \nabla^2\Phi_\alpha(\bu)\succeq c_\alpha\bI_N \text{ for every }\bu \in(-1,1)^N \cap B(\bu_\alpha^{L_\alpha},8\varepsilon_\alpha\sqrt N), \qquad \|\bu_\alpha^{L_\alpha}-\bm_\alpha\| <\varepsilon_\alpha\sqrt N/8.
\]
In this choice, the radius is fixed before the iteration count. We next transfer the anchor's convexity bound to nearby parameters. On the common norm event $\|\bW_N\|_{\mathrm{op}}\le10$ and $\|\bg\|\le2\sqrt N$, whose probability tends to one by the Gaussian bounds recorded above,
\[
\begin{aligned}
    \|\bA_\alpha-\bA_\beta\|_{\mathrm{op}}
       &\le |r_\alpha-r_\beta|
                  +10|\sqrt{r_\alpha}-\sqrt{r_\beta}|
        \le C|\alpha-\beta|,\\
    \|\bh_\alpha-\bh_\beta\|
       &\le\bigl(|t_\alpha-t_\beta|
                    +2|\sqrt{t_\alpha}-\sqrt{t_\beta}|\bigr)\sqrt N
        \le C|\alpha-\beta|\sqrt N.
\end{aligned}
\]
The constants here and in the following fixed-iteration comparison are independent of $N$. The square-root estimates use that $r_\beta$ and $t_\beta$ stay bounded away from zero on the chosen compact parameter interval. Since $\tanh$ is $1$-Lipschitz and $\|\bu_\beta^t\|\le\sqrt N$, subtracting the two AMP recursions and applying induction gives
\[
    \|\bu_\alpha^{L_\alpha}-\bu_\beta^{L_\alpha}\| \le C_{L_\alpha}|\alpha-\beta|\sqrt N.
\]
The constant may depend on $L_\alpha$, which is harmless because $L_\alpha$ is already fixed and the parameter neighborhood is chosen afterward. We now compare the Hessians without estimating the variation of the entropy Hessian near $\pm1$. Indeed, at the same vector $\bu\in(-1,1)^N$,
\[
\begin{aligned}
    \nabla^2\Phi_\beta(\bu) &=\operatorname{diag}\left(\frac1{1-u_i^2}\right)_{i=1}^N -\bA_\beta+a_\beta\bI_N, \\
    \nabla^2\Phi_\beta(\bu)-\nabla^2\Phi_\alpha(\bu) &=-(\bA_\beta-\bA_\alpha)+(a_\beta-a_\alpha)\bI_N.
\end{aligned}
\]
The entropy terms cancel exactly, and the fields contribute only linear terms. By \eqref{eq:scalar-checkpoint-lipschitz} and the matrix comparison above,
\[
    \sup_{\bu\in(-1,1)^N} \|\nabla^2\Phi_\beta(\bu)-\nabla^2\Phi_\alpha(\bu)\|_{\mathrm{op}} \le C|\alpha-\beta|.
\]
Shrink $\delta_\alpha$ so that, whenever $|\alpha-\beta|<\delta_\alpha$, the AMP centers differ by at most $\varepsilon_\alpha\sqrt N$ and this Hessian difference has operator norm at most $c_\alpha/2$ on the norm event. If $\|\bu-\bu_\beta^{L_\alpha}\|\le6\varepsilon_\alpha\sqrt N$, then
\[
    \|\bu-\bu_\alpha^{L_\alpha}\|
      \le6\varepsilon_\alpha\sqrt N
          +\|\bu_\beta^{L_\alpha}-\bu_\alpha^{L_\alpha}\|
      \le7\varepsilon_\alpha\sqrt N<8\varepsilon_\alpha\sqrt N.
\]
Thus the ball around the new center is contained in the anchor's convexity ball, and the Hessian perturbation costs at most $c_\alpha/2$. This proves (I). To prove (II), note that $\beta\mapsto q_\beta-F(s_{L_\alpha}(\beta))$ is continuous. Shrink $\delta_\alpha$ again so that
\[
    q_\beta-F(s_{L_\alpha}(\beta))<\varepsilon_\alpha^2/64 \quad\text{whenever }|\alpha-\beta|<\delta_\alpha.
\]
For each fixed $\beta$ in this neighborhood, \eqref{eq:fixed-side-amp-mean-distance} gives
\[
    \frac{\|\bu_\beta^{L_\alpha}-\bm_\beta\|^2}{N}
        \overset{\bbP}{\longrightarrow}
            q_\beta-F(s_{L_\alpha}(\beta)),
\]
which implies (II), with the stated strict margin. This argument does not require continuity of the random posterior mean as the model parameter varies. We use continuity of the fixed AMP iterates to move the convexity ball, and continuity of the scalar limiting error to locate the posterior mean inside that ball. On the intersection of (I) and (II), every $\bu$ with $\|\bu-\bm_\beta\|\le4\varepsilon_\alpha\sqrt N$ satisfies
\[
    \|\bu-\bu_\beta^{L_\alpha}\|
       \le\|\bu-\bm_\beta\|
                    +\|\bm_\beta-\bu_\beta^{L_\alpha}\|
       <\left(4+\frac14\right)\varepsilon_\alpha\sqrt N
       <6\varepsilon_\alpha\sqrt N.
\]
Consequently, for each fixed $\beta$ in this neighborhood, with probability tending to one,
\[
    \nabla^2\Phi_\beta(\bu)\succeq\frac{c_\alpha}{2}\bI_N
       \quad\text{for every }\bu\in(-1,1)^N
                 \cap B(\bm_\beta,4\varepsilon_\alpha\sqrt N).
\]
The conclusion is now centered at the exact posterior mean and no longer involves the auxiliary iteration count $L_\alpha$. This is what allows the algorithm's final AMP counts to be chosen later.

The open intervals $(\alpha-\delta_\alpha,\alpha+\delta_\alpha)$ cover $[\alpha_0,\alpha_M]$. Choose a finite subcover, and let $\varepsilon_*>0$ and $c_*>0$ be the minima of the corresponding finitely many radii $\varepsilon_\alpha$ and constants $c_\alpha$. For every fixed $\beta\in[\alpha_0,\alpha_M]$, select one of these neighborhoods containing $\beta$. The preceding conclusion gives, with probability tending to one,
\[
    \nabla^2\Phi_\beta(\bu)\succeq\frac{c_*}{2}\bI_N \quad\text{for every }\bu\in(-1,1)^N \cap B(\bm_\beta,4\varepsilon_*\sqrt N).
\]
We now apply this conclusion at an actual checkpoint $j=\lfloor\beta n\rfloor$, with $N=n-j$. Fix a selected neighborhood containing $\beta$; it also contains $j/n$ for all sufficiently large $n$. The coupled-iterate and same-argument Hessian comparisons above are deterministic on the common norm event, so they apply to $j/n$ as well. More explicitly, the extra perturbations from $\beta$ to $j/n$ are $O(\sqrt N/n)$ for a fixed AMP iterate and $O(n^{-1})$ for the Hessian. The rounding argument preceding the proof supplies the needed AMP-to-mean estimate in this $n$-dependent model: for the neighborhood's fixed count $L_\alpha$,
\[
    \frac{\|\bu_{j/n}^{L_\alpha}-\bm_{j/n}\|^2}{N}
       \overset{\bbP}{\longrightarrow}
            q_\beta-F(s_{L_\alpha}(\beta))
       <\varepsilon_\alpha^2/64.
\]
Under the conditional-model identification above, $\bm_{j/n}$ corresponds to $\bm^{(j)}(\theta_{S_j})$ and $\Phi_{j/n}$ to $\Phi_j(\cdot;\theta_{S_j})$. Since $N/n\ge1-\alpha_M$, choose constants
\[
    0<\rho<\frac{\varepsilon_*}{2}\sqrt{1-\alpha_M},
    \qquad 0<\kappa_1<c_*/16.
\]
Returning to the original coordinates, we have shown that, for every fixed $\beta\in[\alpha_0,\alpha_M]$ and $j=\lfloor\beta n\rfloor$, with probability tending to one,
\begin{equation}\label{eq:checkpoint-mean-convexity}
\begin{gathered}
    \nabla_{\bu}^2\Phi_j(\bu;\theta_{S_j})\succeq4\kappa_1\bI
       \quad\text{for every }\bu\in(-1,1)^{R_j}\cap
             B(\bm^{(j)}(\theta_{S_j}),4\rho\sqrt n).
\end{gathered}
\end{equation}
The constants are common to all fixed fractions in the interval; the high-probability assertion remains pointwise in the fraction. Once a finite mesh is chosen, its checkpoint events can therefore be intersected with probability tending to one by a finite union bound, regardless of their dependence. With $\rho$ and $\kappa_1$ fixed, choose $\Delta>0$ so that
\[
    L_\lambda\Delta<\rho^2/32,\qquad
    L_\lambda\Delta<\kappa_1.
\]
Choose an integer $M\ge1$ large enough that $(\alpha_M-\alpha_0)/M\le\Delta$, and set $\alpha_\ell=\alpha_0+\ell(\alpha_M-\alpha_0)/M$ for $1\le\ell<M$. This fixes a finite deterministic mesh independently of $n$. The first bound on $\Delta$ will control the motion of the posterior mean between checkpoints, and the second will control the loss of curvature from the change in $a_{k/n}$. Finally, set $\kappa_\lambda=\min\{c_0/4,\kappa_1,1/2\}$. The auxiliary counts $L_\alpha$ used in the finite-cover argument need not be the algorithm's checkpoint counts $L_\ell$; those will now be chosen using the already fixed radius $\rho$.

\textbf{Step 3: Choose $L_\ell$ and deal with $j_{\ell} \le k \le j_{\ell+1}$ for $0 \le \ell \le M-1$.} Use the mesh fixed in Step~2. At each checkpoint $j_\ell=\lfloor\alpha_\ell n\rfloor$, $0\le\ell<M$, choose a sufficiently large fixed AMP count $L_\ell$ so that
\[
    (1-\alpha_\ell)\{q_{\alpha_\ell}-F(s_{L_\ell}(\alpha_\ell))\}<\rho^2/64,
\]
where $s_L(\alpha_\ell)$ is the scalar recursion defined in Step~2. By \eqref{eq:fixed-side-amp-mean-distance}, its rounded-checkpoint version, and the clipping bound below, the stored center is within $\rho\sqrt n/4$ of the posterior mean with probability tending to one. On this event,
\[
    B(\bv^{(j_\ell)}(\theta_{S_{j_\ell}}),2\rho\sqrt n)
       \subset B(\bm^{(j_\ell)}(\theta_{S_{j_\ell}}),4\rho\sqrt n).
\]
Combining this inclusion with \eqref{eq:checkpoint-mean-convexity}, the clipped center $\bv^{(j_\ell)}(\theta_{S_{j_\ell}})$ then satisfies, with probability tending to one,
\begin{equation}\label{eq:checkpoint-center-margins}
\begin{split}
    \|\bv^{(j_\ell)}(\theta_{S_{j_\ell}})-\bm^{(j_\ell)}(\theta_{S_{j_\ell}})\|&<\rho\sqrt n/4,\\
    \nabla_{\bu}^2\Phi_{j_\ell}(\bu;\theta_{S_{j_\ell}})& \succeq4\kappa_1\bI \quad\text{for all }\bu\in(-1,1)^{R_{j_\ell}} \cap B(\bv^{(j_\ell)}(\theta_{S_{j_\ell}}),2\rho\sqrt n).
\end{split}
\end{equation}
Clipping changes the center by at most $\sqrt n\,2^{-n}=o(1)$, which is absorbed by the strict margins. Increasing the final AMP count does not require a new convexity argument: \eqref{eq:checkpoint-mean-convexity} is centered at the posterior mean and is independent of that count.

For the mean-inclusion assertion between these checkpoints, fix $\ell$ and extend $\bv^{(j_\ell)}(\theta_{S_{j_\ell}})$ to a vector $\bw\in[-1,1]^n$ by putting $w_i=\theta_i$ on $S_{j_\ell}$. It is $\cH_{j_\ell}$-measurable. State evolution, including the negligible clipping error, gives
\[
    \frac{\|\bw\|^2}{n} \overset{L^2}{\longrightarrow} c,\qquad \frac{\langle\theta,\bw\rangle}{n} \overset{L^2}{\longrightarrow} c, \qquad c=\alpha_\ell+(1-\alpha_\ell)F(s_{L_\ell}(\alpha_\ell)).
\]
Apply \eqref{eq:checkpoint-maximal-distance} with $p=j_\ell$, $h=j_{\ell+1}$, and $d=\bar q_{\alpha_{\ell+1}}$, using \eqref{eq:fixed-prefix-mean-norm} at the right endpoint. It follows that
\[
\begin{split}
    \max_{j_\ell\le k\le j_{\ell+1}} \frac{\|\bm^{(k)}(\theta_{S_k})-\bv^{(j_\ell)}(\theta_{S_{j_\ell}})|_{R_k}\|^2}{n}
    &\le \bar q_{\alpha_{\ell+1}}-\bar q_{\alpha_\ell} +(1-\alpha_\ell)\{q_{\alpha_\ell}-F(s_{L_\ell}(\alpha_\ell))\} +o_{\bbP}(1)\\
    &<\rho^2/4
\end{split}
\]
with probability tending to one. There are only finitely many intervals, so all these mean-inclusion events and \eqref{eq:checkpoint-center-margins} may be intersected. The mean-inclusion bounds are now in place. We next verify that the Hessian remains positive on the entire constraint set between successive checkpoints.
For any input prefix $x_{S_k}$, the modified Hessian has the form
\[
    \nabla_{\bu}^2\Phi_k(\bu;x_{S_k})
       =\operatorname{diag}\left(\frac1{1-u_i^2}\right)_{i\in R_k}
          -\bJ_{R_k,R_k}+a_{k/n}\bI.
\]
This Hessian does not depend explicitly on the prefix, but the region $\cK_k(x_{S_k})$ on which positivity is needed does, through its stored center. We now return to the target inputs. Fix a checkpoint $j$ and let $\bv^{(j)}(\theta_{S_j})$ be its stored center; for the initial interval use $j=0$ and the oriented center $\bv^+(\theta_{S_{k_0}})$. Given $k\ge j$ and $\bu\in\cK_k(\theta_{S_k})$ in the same interval, define $\widetilde{\bu}$ on $R_j$ by
\[
    \widetilde u_i=
    \begin{cases}
        u_i,&i\in R_k,\\
        v_i^{(j)}(\theta_{S_j}),&i\in R_j\setminus R_k.
    \end{cases}
\]
For $j=0$, use $\bv^+(\theta_{S_{k_0}})$ in place of $\bv^{(j)}(\theta_{S_j})$ throughout this construction and the distance identity below. The newly pinned coordinates are filled with the stored center, not with their spin values $\pm1$. This keeps $\widetilde{\bu}$ in the analytic domain and gives the exact distance identity
\[
    \|\widetilde{\bu}-\bv^{(j)}(\theta_{S_j})\|
        =\|\bu-\bv^{(j)}(\theta_{S_j})|_{R_k}\|.
\]
At a positive checkpoint, $\widetilde{\bu}\in\cK_j(\theta_{S_j})$. For the initial interval, $\widetilde{\bu}\in D_{[n],n}\cap B(\bv^+(\theta_{S_{k_0}}),\rho_0\sqrt n)$, which is contained in the convexity region established in Step~1. In \eqref{eq:checkpoint-hessian-transfer}, the case $j=0$ uses $\Phi_0$ without a prefix argument. Since the entropy is a sum of one-coordinate terms, its $R_k\times R_k$ Hessian block is unaffected by the filled coordinates. We obtain the deterministic identity
\begin{equation}\label{eq:checkpoint-hessian-transfer}
    \nabla_{\bu}^2\Phi_k(\bu;\theta_{S_k})
       =\bigl(\nabla_{\bu}^2\Phi_j(\widetilde{\bu};\theta_{S_j})\bigr)_{R_k,R_k}
          +(a_{k/n}-a_{j/n})\bI.
\end{equation}
The revealed field contributes only a linear term and therefore does not enter this identity. A lower bound on a symmetric matrix passes to every principal submatrix, as can be seen by extending a test vector by zeros. Combining \eqref{eq:checkpoint-hessian-transfer} with \eqref{eq:scalar-checkpoint-lipschitz}, the only possible loss of convexity is the scalar change in $a$:
\[
    \nabla_{\bu}^2\Phi_j(\widetilde{\bu};\theta_{S_j})\succeq c\bI
    \quad\Longrightarrow\quad
    \nabla_{\bu}^2\Phi_k(\bu;\theta_{S_k})
       \succeq\left(c-L_\lambda\frac{k-j}{n}\right)\bI.
\]
There is no comparison of the entropy Hessian at two nearby arguments, so its divergence near $\pm1$ causes no difficulty. On the initial interval this gives
$\nabla_{\bu}^2\Phi_k(\bu;\theta_{S_k})\succeq(c_0/2-L_\lambda\alpha_0)\bI\succeq(c_0/4)\bI$ for every $\bu\in\cK_k(\theta_{S_k})$. On every later checkpoint interval, \eqref{eq:checkpoint-center-margins} gives
\[
    \nabla_{\bu}^2\Phi_k(\bu;\theta_{S_k})
       \succeq\{4\kappa_1-L_\lambda(\Delta+1/n)\}\bI
       \succeq2\kappa_1\bI,\qquad \bu\in\cK_k(\theta_{S_k}),
\]
for all sufficiently large $n$. The $1/n$ term accounts for rounding the checkpoints. These bounds hold simultaneously for all intermediate $k$ on the finite collection of checkpoint events; no union bound over the revealing times is needed.

\textbf{Step 4: Deal with $j_M \le k \le n$ and define the good events.} The Gaussian norm bound recorded in Section~\ref{ss:prelim-lem} handles the final suffix. At $j_M$, 
\[
    \|\bJ_{R_{j_M},R_{j_M}}\|_{\mathrm{op}}
       \le\lambda^2\frac{n-j_M}{n}
          +10\lambda\sqrt{\frac{n-j_M}{n}}<\frac12
\]
with probability tending to one. All subsequent free blocks are principal submatrices of this one. Since the entropy Hessian is at least $\bI$ and $a_{k/n}\ge0$, it follows simultaneously for $k\ge j_M$ that $\nabla_{\bu}^2\Phi_k(\bu;x_{S_k})\succeq\bI/2$ throughout $(-1,1)^{R_k}$ for every input prefix $x_{S_k}$. The same Gaussian norm bound in dimension $n$ gives $\|\bJ\|_{\mathrm{op}}\le\lambda^2+10\lambda$ with probability tending to one. It remains to check that clipping preserves the true means and to make the good events prefix-measurable. On the latter norm event, every row of $\bJ$ has absolute row sum at most $(\lambda^2+10\lambda)\sqrt n$. The exact single-site conditional field, with all other spins fixed, is $\sum_{j\ne i}J_{ij}\theta_j$; the diagonal term is constant in the Hamiltonian. Averaging its hyperbolic tangent over any unrevealed coordinates shows, for every prefix and every free coordinate,
\[
    |\operatorname{arctanh}(m_i^{(k)}(x_{S_k}))|
       \le(\lambda^2+10\lambda)\sqrt n,
    \qquad
    1-|m_i^{(k)}(x_{S_k})|\ge e^{-2(\lambda^2+10\lambda)\sqrt n}
       >2^{-n}
\]
for all sufficiently large $n$, since $\lambda$ is fixed and $n\log2>2(\lambda^2+10\lambda)\sqrt n$ eventually. Hence clipping does not exclude the true conditional mean. Together with the preceding ball-inclusion estimates this proves $\bm^{(k)}(\theta_{S_k})\in\cK_k(\theta_{S_k})$ simultaneously for every $k_0\le k<n$. For an arbitrary input prefix $x_{S_k}$ without a previous failure, each stored center belongs to its clipped cube, as does its restriction; this also holds for the initial orientation, even if it is zero. The center therefore belongs to the cube--ball intersection defining $\cK_k(x_{S_k})$. The whole-cube case contains zero, as do the domains assigned after a failure. Every domain is consequently nonempty, compact, and convex, independently of any probabilistic event.

With $\kappa_\lambda$ chosen in Step~2, define
\[
    G_k=\left\{\|\bJ\|_{\mathrm{op}}\le\lambda^2+10\lambda,
       \ \bm^{(k)}(\theta_{S_k})\in\cK_k(\theta_{S_k}),
       \ \inf_{\bu\in\cK_k(\theta_{S_k})}\lambda_{\min}(\nabla_{\bu}^2\Phi_k(\bu;\theta_{S_k}))
                   \ge\kappa_\lambda\right\}.
\]
On $G_k$, the Hessian bound and convexity of $\cK_k(\theta_{S_k})$ imply, for all $\bu,\bw\in\cK_k(\theta_{S_k})$,
\begin{equation}\label{eq:constraint-strong-convexity}
    \Phi_k(\bw;\theta_{S_k})\ge\Phi_k(\bu;\theta_{S_k})
       +\langle\nabla_{\bu}\Phi_k(\bu;\theta_{S_k}),\bw-\bu\rangle
       +\frac{\kappa_\lambda}{2}\|\bw-\bu\|^2.
\end{equation}
This follows by restricting $\Phi_k(\cdot;\theta_{S_k})$ to the segment joining $\bu$ and $\bw$. Compactness then gives a unique constrained minimizer. 
The conditional mean, objective, and stored center defining $\cK_k(\theta_{S_k})$ depend only on $\bJ$ and the already exposed index--value pairs. Thus $G_k\in\cH_k$; no unexposed index or value is used in its definition. The events employed above imply $\cap_{k=k_0}^{n-1}G_k$ with probability tending to one, so $b_n=o(1)$. This last conclusion uses maximal inequalities within the intervals and intersections over only finitely many checkpoint events.
\end{proof}

\subsection{Optimization accuracy under the prescribed tolerances}\label{ss:deterministic-tap-optimization}

In this subsection we prove Lemma~\ref{lem:optimization-accuracy}. Fix an input prefix $x_{S_k}$ and assume $\|\bJ\|_F\le n^2$. All objective values, gradients, projections, and outputs in this subsection use this same prefix. The local iterates $\bu^t$ and gradient approximations $\bg^t$ refer to this fixed call. Since $\operatorname{arctanh}(c_n)=\frac12\log(2^{n+1}-1)\le n+1$, the entropy gradient obeys $\|\operatorname{arctanh}(\bu)\|\le(n+1)\sqrt n$ on the clipped cube. Also
\[
    \|\bJ_{R_k,R_k}\bu\|\le\|\bJ\|_F\sqrt n,
    \qquad
    \|\widehat{\bh}^{(k)}(x_{S_k})\|\le\|\bJ\|_F\sqrt n,
    \qquad a_{k/n}\le\lambda^2.
\]
It follows that $\|\nabla_{\bu}\Phi_k(\bu;x_{S_k})\|\le B_n$ on $D_{R_k,n}$. The conservative choice of $B_n$ in Section~\ref{ss:conditional-tap}, and hence the prescribed step size and iteration count, remain valid with this sharper entropy-gradient bound. Every $\cK_k(x_{S_k})$ has diameter at most $D_n$. On the norm-capped input set, $B_n=O_\lambda(n^{5/2})$.

We use the deterministic implementation of Appendix~\ref{s:numerical-implementation}. The optimization proof requires the following gradient-error and feasible-projection bounds:
\[
    \|\bg^t-\nabla_{\bu}\Phi_k(\bu^t;x_{S_k})\|\le n^{-100},\qquad
    \bu^{t+1}\in\cK_k(x_{S_k}),\qquad
    \left\|\bu^{t+1}-\operatorname{Proj}_{\cK_k(x_{S_k})}
          (\bu^t-\gamma_n\bg^t)\right\|\le n^{-100}.
\]
Errors in the step size are included in the last tolerance. The coefficient and observation errors are included in the gradient tolerance. The returned vector is the feasible average of the stored pre-update iterates. The final logit-vector calculation has Euclidean error at most $n^{-100}$ relative to \eqref{eq:sampling-proposal-logits}, including the errors in the field, interaction matrix, and scalar coefficient. Appendix~\ref{s:numerical-implementation} realizes these bounds with polynomial bit cost.

Suppose that $\Phi_k(\cdot;x_{S_k})$ is convex on $\cK_k(x_{S_k})$, and let $\bu_*(x_{S_k})$ minimize it there. Apply the projected subgradient theorem \cite[Theorem~3.2]{bubeck2015convex} with gradient bound $B_n$, initial distance at most $D_n$, and $L_n$ iterations. The exact-arithmetic average satisfies
\[
    \Phi_k(\widehat{\bm}^{(k)}(x_{S_k});x_{S_k}) -\Phi_k(\bu_*(x_{S_k});x_{S_k}) \le\frac{D_n^2}{2\gamma_nL_n}+\frac{\gamma_nB_n^2}{2} =\frac{D_nB_n}{\sqrt{L_n}}=O_\lambda(n^{-17}).
\]
We only need to account for the prescribed gradient and projection errors, rather than repeat this standard convergence argument. For the rounded routine, nonexpansiveness of projection and feasibility give the same telescoping estimate with explicit additional terms:
\[
    \Phi_k(\widehat{\bm}^{(k)}(x_{S_k});x_{S_k})-\Phi_k(\bu_*(x_{S_k});x_{S_k}) \le\frac{D_n^2}{2\gamma_nL_n} +\frac{\gamma_n(B_n+n^{-100})^2}{2} +D_n n^{-100} +\frac{2D_n n^{-100}+n^{-200}}{2\gamma_n}.
\]
The term $D_n n^{-100}$ comes from the gradient error; the last term comes from the distance between the feasible stored iterate and the exact projection. Since $\gamma_n^{-1}=O_\lambda(n^{22})$, the additional terms are $O_\lambda(n^{-70})$. The theorem uses only convexity and a bounded gradient, not a polynomial uniform upper bound on the entropy Hessian; indeed, $\phi^{\prime\prime}(c_n)=(1-c_n^2)^{-1}\asymp2^n$. This implies \eqref{eq:sampling-optimization-accuracy}. 

\subsection{A uniform overlap estimate after logarithmic pinning}\label{ss:uniform-overlap}

We now prove the estimate stated in Proposition~\ref{prop:logarithmic-pinning}, together with the one-site cavity version needed for the TAP residual. The issue occurs at the beginning of the revealing path, where the side-channel strength is $t=\lambda^2k/n$. For the parameters $N=n-k-d$, $r=\lambda^2N/n$, and $q=F(rq+t)$, the identification in Section~\ref{ss:prelim-lem} gives
\[
    1-\tau=\frac{t}{rq+t}=\frac{k}{Nq+k}.
\]
For the symmetric Rademacher prior above the recovery threshold, the signed-overlap estimate in \cite[Proposition~13 and Theorem~7, equation~(26)]{alaoui2017finite} contains an exponential error $e^{-c(\tau)N}$ whose stated rate vanishes quadratically as $\tau\uparrow1$. At $k=O(\log n)$, $q\to q_0>0$ and $N(1-\tau)^2=O((\log n)^2/n)$. This degeneration prevents a direct uniform application at a logarithmic prefix.

We use Lemma~\ref{lem:unpinned-overlap-input} to control the magnitude of the overlap and an exact pinning comparison to control its sign. The cavity system in Lemma~\ref{lem:cavity-moment-input} then converts the resulting tail bound at a fixed tolerance into an $O(N^{-1})$ second moment. Its remainder estimate is used before signed-overlap concentration and remains uniform as $t\downarrow0$. At positive revealed fractions, Lemma~\ref{lem:overlap-concentration} supplies the required tail directly. Uniformity in the revealing index below refers to the averaged moments stated in the lemma.

\begin{lem}[Uniform overlap after logarithmic pinning]\label{lem:uniform-pinned-overlap}
For $d\in\{0,1\}$ and $k_0\le k<n-d$, set $N=n-k-d$, $r=\lambda^2N/n$, and $t=\lambda^2k/n$. In the Gaussian model of Section~\ref{ss:prelim-lem}, let $\sigma^1,\sigma^2$ be independent posterior replicas and put $R_{12}=N^{-1}\sum_{i=1}^N\sigma_i^1\sigma_i^2$. Then, uniformly in these choices,
\begin{equation}\label{eq:tap-overlap-second-moment}
    \bbE\left\langle(R_{12}-q)^2\right\rangle \le\frac{C_\lambda}{N},\qquad q=F(rq+t).
\end{equation}
If $\bm=\langle\sigma\rangle$ and $\bC=\langle(\sigma-\bm)(\sigma-\bm)^{\top}\rangle$ denote the posterior mean and covariance, then
\begin{equation}\label{eq:tap-cavity-covariance-moments}
    \bbE\left[\theta^{\top}\bC\theta+\bm^{\top}\bC\bm +\operatorname{Tr}(\bC^2)+(\|\bm\|^2-Nq)^2\right] \le C_\lambda N.
\end{equation}
Here brackets denote posterior averaging, including all replicas displayed inside them.
\end{lem}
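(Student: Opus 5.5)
The plan is to prove a \emph{tail} bound for $R_{12}$ at a fixed tolerance $\varepsilon$, uniformly along the path. I would then feed that bound into the cavity system of Lemma~\ref{lem:cavity-moment-input} to upgrade it to the $O(N^{-1})$ second moment. The covariance bounds follow afterwards by replica algebra.

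\textbf{Cavity upgrade.} Fix $\varepsilon>0$ small, to be chosen depending only on $\lambda$, and set $T_\varepsilon=\bbE\langle\bone_{\{|R_{12}-q|>\varepsilon\}}\rangle$.
\begin{itemize}
\item Since $1-2q+p=\bbE\operatorname{sech}^4=F'(rq+t)$, the eigenvalues of $\bC_{\rm cav}$ are at most $rF'(rq+t)\le a_{k/n}\le a_0<1$ in absolute value. For the second eigenvalue use $|1-3q+2p|\le 1-2q+p$. Hence $\|(\bI-\bC_{\rm cav})^{-1}\|\le C_\lambda$ uniformly in $k$.
\item Along the path $\Lambda=r+t/q$ stays in a compact subinterval of $(1,\infty)$, because $q\ge q_0>1-\lambda^{-2}$ and $\Lambda\le\lambda^2/q_0$.
\item The remainder splits as $\bbE\langle|R_{12}-q|^3\rangle\le\varepsilon\,\bbE\langle(R_{12}-q)^2\rangle+8T_\varepsilon$. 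The two $1/N$ terms are bounded by $C/N$.
\end{itemize}
Together with Lemma~\ref{lem:nishimori-input}, which identifies $z_1=\bbE\langle(R_{12}-q)^2\rangle$, this gives
\[
z_1\le C_\lambda\bigl(N^{-1}+\varepsilon z_1+T_\varepsilon\bigr).
\]
Choosing $\varepsilon$ small absorbs the middle term, so it suffices to show $T_\varepsilon\le C/N$ uniformly.

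\textbf{Tail bound when $k\ge\alpha_* n$.} Here $t\ge\lambda^2\alpha_*$ and $r\le\lambda^2$. Lemma~\ref{lem:overlap-concentration} with its compact-parameter uniformity then gives $T_\varepsilon\le Ce^{-cN}$, and $N\le n$ makes this $\le C/N$. The rounding to $t=\lambda^2k/n$ is handled by the parameter coupling already described.

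\textbf{Tail bound when $k_0\le k<\alpha_* n$ (main obstacle).} Here the signed rate of \cite{alaoui2017finite} degenerates, so I would instead compare with the unpinned posterior exactly. The model in question is $\mu_\bJ(\cdot\mid x_{S_k}=\theta_{S_k})$, possibly with one further cavity site removed.

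First, the magnitude. Lemma~\ref{lem:unpinned-overlap-input} with $r=\lambda^2$ gives exponentially small unpinned mass on $\{||R_{12}|-q_0|>\varepsilon/2\}$. Three further facts are small on the scale of $\varepsilon$:
\begin{itemize}
\item restricting $R_{12}$ to $R_k$ changes it by $O(k/n)$;
\item $|q-q_0|\le L_\lambda k/n$;
\item the pinning costs at most a factor $1/\mu(\theta_{S_k})$ in density.
\end{itemize}

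Second, the reciprocal prefix probability. Summing over the $2^k$ patterns gives $\bbP(\mu_\bJ(\theta_{S_k})<\delta)\le2^k\delta$. Truncating at $\delta=2^{-k}n^{-2}$ costs $n^{-2}$ and leaves a change-of-measure factor of at most $2^{k}n^{2}$ on the complement. This keeps the pinned mass of the $\varepsilon/2$-magnitude event at $n^{O(1)}2^ke^{-cn}$, which is negligible.

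Third, the negative phase $\{R_{12}\approx-q\}$. Its pinned mass equals $\bbE\,\mu^{\otimes2}(\text{neg},\sigma^1_S=\sigma^2_S=\theta_S)/\mu(\theta_S)^2$. Using Nishimori, $\theta$ becomes a third replica. After truncation the whole expression is bounded by $2^{k}n^{2}$ times the probability that, over a uniformly random $S$ of size $k$, $\sigma^1$ and $\sigma^2$ agree on all of $S$ although their overlap is near $-q_0$. That probability is about $((1-q_0+\varepsilon)/2)^{k}$. The product $(1-q_0+\varepsilon)^k n^{2}$ is at most $\lambda^{-2k}$-type decay times $n^2$. With $k\ge A_\lambda\log n$, the choice of $A_\lambda$ in \eqref{eq:sampling-prefix-length} makes this $\le n^{-2}$.

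I expect the delicate point to be bookkeeping the exact powers: one versus two reciprocal factors, and the fact that sampling without replacement loses only a constant. If the crude product does not close, I would bound one replica against $\theta$ directly via Nishimori so that only a single reciprocal factor appears. Putting the three pieces together gives $T_\varepsilon\le Cn^{-2}\le C/N$.

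\textbf{Covariance moments.} Using independent replicas,
\[
\operatorname{Tr}(\bC^2)=N^2\bigl(\langle R_{12}^2\rangle-2\langle R_{12}R_{13}\rangle+\langle R_{12}R_{34}\rangle\bigr),
\]
and the right-hand side is a combination of centered moments controlled by $\bz$ and $z_1$, giving $O(N)$. Similarly
\[
\bm^\top\bC\bm=N^2\bigl(\langle R_{12}R_{13}\rangle-\langle R_{12}R_{34}\rangle\bigr),
\]
and $\theta^\top\bC\theta$ becomes a term of the same form after Nishimori. Finally $\|\bm\|^2=N\langle R_{12}\rangle$, so by Jensen
\[
(\|\bm\|^2-Nq)^2\le N^2\langle(R_{12}-q)^2\rangle,
\]
which is $O(N)$ by \eqref{eq:tap-overlap-second-moment}. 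The case $d=1$ is identical with $N=n-k-1$.
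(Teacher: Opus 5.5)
Your proposal is correct and follows essentially the same route as the paper. The cavity system reduces everything to a fixed-tolerance tail bound. For $k\le\alpha_* n$ that bound comes from the exact pinning comparison with a truncated reciprocal prefix mass. In fact, summing out $\theta_S$ already collapses your $\mu(\theta_S)^{-2}$ to the single factor $\mu(\sigma^1_S)^{-1}$, which is precisely the paper's comparison of the pair $(\theta,\sigma)$ against two independent replicas, so your hedge about two reciprocal factors is unnecessary. For $k\ge\alpha_* n$ the bound follows from Lemma~\ref{lem:overlap-concentration}.

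One harmless slip: the inequality $|1-3q+2p|\le 1-2q+p$ fails when $rq+t$ is large. However, you only need the upper bound $r(1-3q+2p)\le r(1-q)\le 1-c_\lambda$. Since $\bC_{\rm cav}$ is not symmetric, the uniform inverse bound should then go through $\det(\bI-\bC_{\rm cav})\ge c_\lambda^3$ and the adjugate, as in the paper.
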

\begin{proof}
Fix $d\in\{0,1\}$ and $k_0\le k<n-d$, and set $N=n-k-d$. We first use Lemma~\ref{lem:cavity-moment-input} to reduce the second-moment estimate to a concentration estimate at a fixed tolerance. We then obtain that concentration uniformly in $k$ by comparing the model to an unpinned posterior when $k$ is small, and by using Lemma~\ref{lem:overlap-concentration} when a positive fraction has been revealed.

The parameters $r=\lambda^2N/n$ and $t=\lambda^2k/n$ correspond to total signal strength $\lambda\sqrt{(n-d)/n}$ and revealed fraction $k/(n-d)$. The scalar bounds in Section~\ref{ss:prelim-lem}, applied uniformly in a compact neighborhood of the fixed value $\lambda>1$, therefore give, for all sufficiently large $n$,
\[
    q\ge c_\lambda>0,\qquad r(1-q)\le1-c_\lambda,\qquad |q-q_{k/n}|\le\frac{C_\lambda d}{n}.
\]
For $d=0$ the last difference is zero. For $d=1$, both the total signal strength and the revealed fraction differ from their original values by $O_\lambda(n^{-1})$, so the bound follows from the same implicit-function estimate. In particular,
\[
    \lambda^2(1-d/n)\le\Lambda=r+t/q \le\lambda^2+\lambda^2/c_\lambda.
\]
Thus $\Lambda$ stays in a fixed compact subinterval of $(1,\infty)$, and Lemma~\ref{lem:cavity-moment-input} applies uniformly throughout the present range. We next check that its linear system can be inverted with a uniform bound. Since $p\le q$, both eigenvalues listed in Lemma~\ref{lem:cavity-moment-input} are at most $r(1-q)\le1-c_\lambda$. Hence
\[
    \det(\bI-\bC_{\rm cav}) =\{1-r(1-2q+p)\}^2\{1-r(1-3q+2p)\} \ge c_\lambda^3.
\]
The entries of $\bC_{\rm cav}$ are bounded in terms of $\lambda$, so the adjugate formula gives $\|(\bI-\bC_{\rm cav})^{-1}\|_{\mathrm{op}}\le C_\lambda$. This argument also applies when the cavity matrix has a nontrivial Jordan block; it does not require symmetry. Inverting \eqref{eq:eakj-cavity-system}, using \eqref{eq:eakj-cavity-remainder}, and bounding $|R_{12}-q|\le2$, we obtain
\begin{equation}\label{eq:tap-cavity-moment-inequality}
    \bbE\left\langle(R_{12}-q)^2\right\rangle \le\frac{C_\lambda}{N} +C_\lambda\bbE\left\langle|R_{12}-q|^3\right\rangle.
\end{equation}
In particular, the constant is uniform even when $t$ tends to zero. Fix $\varepsilon\in(0,q_0/4)$ so small that the coefficient of the cubic moment in \eqref{eq:tap-cavity-moment-inequality}, multiplied by $\varepsilon$, is at most $1/2$. We will prove a uniform tail bound at this fixed tolerance. In fact, we will show that
\begin{equation}\label{eq:uniform-pinned-overlap-tail}
    \bbE\left\langle \bone_{\{|R_{12}-q|>\varepsilon\}}\right\rangle \le C_{\lambda,\varepsilon} \{n^{-16}+e^{-c_{\lambda,\varepsilon}N}\}.
\end{equation}
To see \eqref{eq:uniform-pinned-overlap-tail}, choose a constant $\delta>0$ sufficiently small, as specified below, and consider two cases separately.

\textbf{Case 1: $k_0\le k\le\delta n$.} To represent this model by exact pinning, generate an unpinned model on $n-d=N+k$ sites with off-diagonal interactions
\[
    A_{uv}=\frac{\lambda^2}{n}\theta_u\theta_v
                 +\frac{\lambda}{\sqrt n}Z_{uv},\qquad u<v,
\]
where the $Z_{uv}$ are independent standard Gaussians. Let $P$ be its posterior and let $S$ be an independent uniform $k$-subset of these sites. After revealing $\theta_S$, the field at a free site $j$ is
\[
    \sum_{a\in S}A_{ja}\theta_a
       =\frac{\lambda^2k}{n}\theta_j
           +\frac{\lambda}{\sqrt n}\sum_{a\in S}Z_{ja}\theta_a
       =t\theta_j+\sqrt t\,g_j.
\]
Conditional on $(\theta,S)$, the $g_j$ are independent standard Gaussians and are independent of the noise in the free interaction block. The latter block has signal coefficient $r/N=\lambda^2/n$ and noise variance $r/N$. After conditioning on $(\bA,S,\theta_S)$, the Hamiltonian terms supported entirely on $S$ are constant, so the free Gibbs density depends on the observations only through the free block and the displayed field. Consequently the posterior on the free sites has exactly the Gaussian side-channel law in the statement, after relabeling its coordinates. This also verifies the one-site deletion case $d=1$ without changing the interaction normalization. Let $\cF=\sigma(\bA,S)$. Under the planted law, the conditional distribution of $\theta$ given $\cF$ is $P$, by Lemma~\ref{lem:nishimori-input} and the independence of $S$. For fixed $\cF$, compare two ways of drawing a pair: the independent pair with law $P^{\otimes2}$, and the pair obtained by drawing $\theta\sim P$ followed by $\sigma\sim P(\cdot\mid\sigma_S=\theta_S)$. Writing $P(\theta_S)$ for the prefix mass $P\{x:x_S=\theta_S\}$, their exact density relation is
\begin{equation}\label{eq:pinning-density-ratio}
    P(\mathrm d\theta)P(\mathrm d\sigma\mid\sigma_S=\theta_S)
       =\frac{\bone_{\{\sigma_S=\theta_S\}}}{P(\theta_S)}
             P(\mathrm d\theta)P(\mathrm d\sigma).
\end{equation}
Conditional on $\sigma(\bA,S,\theta_S)$ under the second law, the free parts of $\theta$ and $\sigma$ are independent samples from the same pinned posterior. By Lemma~\ref{lem:nishimori-input}, their free overlap therefore has the same averaged law as $R_{12}$ in the $N$-site model. The estimate to be proved can thus be computed using the density in \eqref{eq:pinning-density-ratio}.

We first control its denominator. For every fixed $P$ and $S$,
\[
    P\{\theta:P(\theta_S)<2^{-k}n^{-16}\}
       =\sum_{x_S\in\{-1,1\}^S}
             P(x_S)\bone_{\{P(x_S)<2^{-k}n^{-16}\}}
       \le 2^k\,2^{-k}n^{-16}=n^{-16}.
\]
The marginal law of $\theta$ is still $P$ after pinning, so this exceptional event costs at most $n^{-16}$ under the second law as well. On its complement the reciprocal prefix mass is at most $2^kn^{16}$.
For a pair under $P^{\otimes2}$ put $\widetilde R=(n-d)^{-1}\langle\theta,\sigma\rangle$. The unpinned signal parameter is $\lambda^2(n-d)/n$, and its positive fixed point differs from $q_0$ by at most $C_\lambda d/n$. Lemma~\ref{lem:unpinned-overlap-input}, with its compact-parameter uniformity, therefore gives
\begin{equation}\label{eq:unpinned-amplitude-tail}
    \bbE P^{\otimes2}
       \left(\left||\widetilde R|-q_0\right|>\varepsilon/4\right)
       \le C_{\lambda,\varepsilon}e^{-c_{\lambda,\varepsilon}n}
\end{equation}
for all sufficiently large $n$, simultaneously for $d=0,1$.
On the agreement event $\{\sigma_S=\theta_S\}$, the full and free overlaps satisfy the exact relation
\[
    \widetilde R=\frac{k}{n-d}
       +\frac{N}{n-d}\left(\frac1N
                    \sum_{j\notin S}\sigma_j\theta_j\right).
\]
Their difference is therefore at most $2k/(n-d)$. Also,
$|q-q_0|\le C_\lambda(k+d)/n$ by the scalar estimates. Choose $\delta$ small enough that the sum of these two errors is at most $\varepsilon/2$ whenever $k\le\delta n$ and $n$ is sufficiently large. If
$\left||\widetilde R|-q_0\right|\le\varepsilon/4$ and $\widetilde R>-q_0/2$, then $\varepsilon<q_0/4$ forces $\widetilde R>0$. The free overlap is consequently within $3\varepsilon/4$ of $q$. Thus a free overlap outside $(q-\varepsilon,q+\varepsilon)$ implies either the amplitude event in \eqref{eq:unpinned-amplitude-tail} or the negative-phase event $\{\widetilde R\le-q_0/2\}$.

The random set $S$ suppresses the latter event. For a fixed pair $(\theta,\sigma)$, let $a$ be its number of agreeing coordinates, so $a/(n-d)=(1+\widetilde R)/2$. Sampling without replacement gives
\[
    \bbP_S(\sigma_S=\theta_S)
       =\frac{\binom ak}{\binom{n-d}k}
       \le\left(\frac{a}{n-d}\right)^k,
\]
where the ratio is zero if $a<k$. Hence, on $\widetilde R\le-q_0/2$, this probability is at most $\{(1-q_0/2)/2\}^k$. Under the comparison law, $S$ is independent of $\bA$ and the pair, so we can average over $S$ at this point. Combining the rare-prefix estimate, \eqref{eq:pinning-density-ratio}, the event inclusion above, and \eqref{eq:unpinned-amplitude-tail}, we obtain
\[
\begin{split}
    \bbE\left\langle
        \bone_{\{|R_{12}-q|>\varepsilon\}}\right\rangle
       \le{}&n^{-16}
          +C_{\lambda,\varepsilon}n^{16}2^k
                   e^{-c_{\lambda,\varepsilon}n}
          +n^{16}(1-q_0/2)^k.
\end{split}
\]
Decrease $\delta$ further so that $\delta\log2<c_{\lambda,\varepsilon}/4$; for large $n$, the middle term is then at most $C_{\lambda,\varepsilon}e^{-c_{\lambda,\varepsilon}n/2}$. For the last term, $q_0>1-\lambda^{-2}$ and the definition of $A_\lambda$ in \eqref{eq:sampling-prefix-length} imply
\[
    n^{16}(1-q_0/2)^k
       \le n^{16}\left(\frac{1+\lambda^{-2}}2\right)^{A_\lambda\log n}
       \le n^{-48},\qquad k\ge k_0.
\]

\textbf{Case 2: $k\ge\delta n$.} In this case we have $0\le r\le\lambda^2$ and $\lambda^2\delta\le t\le\lambda^2$. Lemma~\ref{lem:overlap-concentration}, applied on this compact parameter set, gives
\[
    \bbE\left\langle
        \bone_{\{|R_{12}-q|>\varepsilon\}}\right\rangle
       \le C_{\lambda,\varepsilon,\delta}
                    e^{-c_{\lambda,\varepsilon,\delta}N}.
\]
This includes arbitrarily small $r$; the case $r=0$ in the input supplies the endpoint needed for uniformity. Since $\delta$ has now been fixed in terms of $\lambda$ and $\varepsilon$, combining the two cases yields \eqref{eq:uniform-pinned-overlap-tail}.

Finally, split the absolute cubic moment according to $|R_{12}-q|\le\varepsilon$. Since $|R_{12}-q|\le2$, \eqref{eq:uniform-pinned-overlap-tail} gives
\[
\begin{split}
    \bbE\langle|R_{12}-q|^3\rangle
       &\le\varepsilon\bbE\langle(R_{12}-q)^2\rangle
            +8\bbE\langle\bone_{\{|R_{12}-q|>\varepsilon\}}\rangle\\
       &\le\varepsilon\bbE\langle(R_{12}-q)^2\rangle
            +C_{\lambda,\varepsilon}
                   \{n^{-16}+e^{-c_{\lambda,\varepsilon}N}\}.
\end{split}
\]
Substitute this into \eqref{eq:tap-cavity-moment-inequality}. By the choice of $\varepsilon$, the resulting multiple of the second moment can be moved to the left-hand side. Since $N\le n$, we have $n^{-16}\le N^{-1}$, and $e^{-cN}\le C_c/N$ for every fixed $c>0$. This proves \eqref{eq:tap-overlap-second-moment}. All choices were made in the order $\lambda$, the cavity constant, $\varepsilon$, $\delta$, and finally the lower bound on $n$. The finitely many smaller $n$ are absorbed into $C_\lambda$ using $|R_{12}-q|\le2$ and $N\le n$; if $k_0=n$, the assertion is vacuous. For the covariance assertion, return to the $N$-site posterior and condition on $(\bA,\bh)$. Independence of the two replicas gives
\[
    \left\langle\left(\sum_{i=1}^N \sigma_i^1\sigma_i^2\right)^2 \right\rangle =\operatorname{Tr}\bigl((\bC+\bm\bm^{\top})^2\bigr),
    \qquad \left\langle\sum_{i=1}^N\sigma_i^1 \sigma_i^2\right\rangle=\|\bm\|^2.
\]
Expanding the square therefore yields
\[
    N^2\left\langle(R_{12}-q)^2\right\rangle =\operatorname{Tr}(\bC^2)+2\bm^{\top}\bC\bm +(\|\bm\|^2-Nq)^2.
\]
Lemma~\ref{lem:nishimori-input} also gives
$\bbE[\theta\theta^{\top}\mid\bA,\bh]=\bC+\bm\bm^{\top}$, and hence
\[
    \bbE\theta^{\top}\bC\theta =\bbE\{\operatorname{Tr}(\bC^2)+\bm^{\top}\bC\bm\}.
\]
Because $\bC$ is positive semidefinite, all terms are nonnegative. The left-hand side of \eqref{eq:tap-cavity-covariance-moments} is thus at most twice $N^2\bbE\langle(R_{12}-q)^2\rangle$, which is at most $C_\lambda N$ by the estimate just proved.
\end{proof}

\subsection{The TAP gradient residual}\label{ss:tap-residual}

The goal of this subsection is to show the following estimate, which will be used in the proof of Lemma~\ref{lem:construct-scalar}.
\begin{lem}\label{lem:tap-gradient}
For $\theta\sim\mu_\bJ$ and the independent revealing order $I$, the target-prefix objective satisfies
\[
    \bbE\|\nabla_{\bu}\Phi_k(\bm^{(k)}(\theta_{S_k});\theta_{S_k})\|^2
       \le C_\lambda\frac{n-k}{n},\qquad k_0\le k<n.
\]
\end{lem}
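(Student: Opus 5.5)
The plan is to write the TAP gradient at the exact conditional mean one coordinate at a time, using the one-site cavity model. For $i\in R_k$ the $i$-th component is
\[
    \partial_i\Phi_k(\bm;\theta_{S_k})=\ell_i^{(k)}-h_i^{(k)}-(\bJ_{R_k,R_k}\bm)_i+a_{k/n}m_i ,
\]
where $\bm=\bm^{(k)}(\theta_{S_k})$ and $\ell_i^{(k)}=\operatorname{arctanh}(m_i)$. By exchangeability under the random order, it suffices to prove that $\bbE|\partial_i\Phi_k|^2\le C_\lambda/n$ for a single free coordinate $i$. Summing over the $N+1=n-k$ free coordinates then gives the claim.

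Fix $i$ and let $\langle\cdot\rangle_{\rm cav}$ denote the posterior on $R_k\setminus\{i\}$. This is exactly the $d=1$ model of Lemma~\ref{lem:uniform-pinned-overlap}, with cavity mean $\bm^{\rm cav}$ and covariance $\bC^{\rm cav}$. With the cavity field $H_i=\sum_{j\ne i}J_{ij}\sigma_j$, the exact formula for $\ell_i^{(k)}$ given in Section~\ref{s:algorithm} reads
\[
    \ell_i-h_i=\tfrac12\log\bigl(\langle e^{H_i}\rangle_{\rm cav}/\langle e^{-H_i}\rangle_{\rm cav}\bigr).
\]
Since the full posterior is the cavity measure tilted by $e^{\sigma_iH_i}$, the quantities $m_i$ and $m_j-m^{\rm cav}_j$ can be expressed through the same tilt. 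I will expand both of them to second order in the centered field $H_i-\mu_i$, where
\[
    \mu_i=\sum_{j\ne i} J_{ij}m^{\rm cav}_j,\qquad v_i=\bJ_{i\cdot}^{\top}\bC^{\rm cav}\bJ_{i\cdot}.
\]
At this order I expect two relations: $\ell_i-h_i=\mu_i+\mathrm{err}_1$, and $(\bJ\bm)_i-J_{ii}m_i=\mu_i+m_iv_i+\mathrm{err}_2$, the latter from $m_j-m^{\rm cav}_j\approx m_i(\bC^{\rm cav}\bJ_{i\cdot})_j$. The diagonal term contributes $O(|J_{ii}|)$, with second moment $O(1/n)$. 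Hence
\[
    \partial_i\Phi_k=\mathrm{err}_1-\mathrm{err}_2+m_i\bigl(a_{k/n}-v_i\bigr)+O(|J_{ii}|).
\]

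The Onsager coefficient is then compared with $a_{k/n}=r(1-q)$. Write $\bJ_{i\cdot}=\frac{\lambda^2}{n}\theta_i\theta+\lambda\bw_i$. Conditionally on the cavity data, $\bw_i$ is Gaussian and independent of the cavity measure. This gives
\[
    \bbE[\lambda^2\bw_i^{\top}\bC^{\rm cav}\bw_i\mid{\rm cav}]=\tfrac{\lambda^2}{n}\operatorname{Tr}\bC^{\rm cav}=\tfrac{\lambda^2}{n}\bigl(N-\|\bm^{\rm cav}\|^2\bigr),
\]
whose distance to $r(1-q)$ is $\frac{\lambda^2}{n}|\|\bm^{\rm cav}\|^2-Nq|$, up to the $O(1/n)$ shift $|q-q_{k/n}|$. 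The conditional variance is $O(n^{-2}\operatorname{Tr}(\bC^{{\rm cav}})^2)$. The signal part is $\frac{\lambda^4}{n^2}\theta^{\top}\bC^{\rm cav}\theta$, and the cross term is handled by Cauchy--Schwarz. Lemma~\ref{lem:uniform-pinned-overlap} with $d=1$ bounds all these expectations by $C_\lambda N/n^2\le C_\lambda/n$.

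The main obstacle is controlling $\mathrm{err}_1$ and $\mathrm{err}_2$, which measure the non-Gaussianity of $H_i$ under the cavity measure, uniformly down to $k=k_0$. I would first condition on the cavity data and use Gaussian integration by parts in $\bw_i$. This reduces the relevant quantities to polynomial expressions in replica overlaps of the cavity measure weighted by the tilt $e^{\pm H_i}$. Examples are $\bbE_{\bw}\langle(\bw_i\cdot(\sigma^1-\bm^{\rm cav}))^4\rangle$ and higher centered moments of $\bw_i\cdot\sigma$. Their conditional expectations are $n^{-2}$ times quantities such as $\operatorname{Tr}(\bC^2)$ and $\langle(\sum_j(\sigma^1_j-m_j)(\sigma^2_j-m_j))^2\rangle$. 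These are again $O(N)$ in expectation by \eqref{eq:tap-overlap-second-moment} and \eqref{eq:tap-cavity-covariance-moments}, giving $\bbE\,\mathrm{err}^2\le C_\lambda/n$.

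The tilt $e^{\pm H_i}$ is not bounded, so I will remove it with a H\"older split. On the event $\|\bJ\|_{\rm op}\le\lambda^2+10\lambda$, the Gaussian moments of $H_i-\mu_i$ are bounded uniformly. The contribution of the complement of this event is exponentially small, because $|\ell_i|,|(\bJ\bm)_i|\le C\sqrt n$ on a Frobenius cap. If the fourth-order expansion turns out too crude, I will fall back on the Taylor-remainder approach of the cavity expansion underlying Lemma~\ref{lem:cavity-moment-input}, which handles exactly such tilted moments with constants uniform in $\tau$.
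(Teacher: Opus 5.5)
Your outer structure matches the paper's: the one-site cavity model with $d=1$, the exact formula $\ell_i-h_i=\frac12\log(\langle e^{H_i}\rangle_{\mathrm{cav}}/\langle e^{-H_i}\rangle_{\mathrm{cav}})$, the tilted representation of $\sum_{j\ne i}J_{ij}m_j$, and the comparison of the Onsager coefficient with $a_{k/n}$ via \eqref{eq:tap-cavity-covariance-moments}. Your treatment of the quenched $v_i$ is fine.

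The gap is exactly the step you flag as the main obstacle, $\bbE\,\mathrm{err}_1^2+\bbE\,\mathrm{err}_2^2\le C_\lambda/n$. The plan you sketch for it would not give this bound. The centered field $Y_i=H_i-\mu_i$ is not small: its cavity variance is close to $a_{k/n}=r(1-q)$, which is of order one for $k$ near $k_0$. A second- or fourth-order Taylor expansion of $e^{\pm Y_i}$ therefore leaves an $O(1)$ remainder. The moments you propose to estimate are also of order one. For instance,
$\bbE_{\bw}\langle(\bw_i\cdot(\sigma-\bm^{\mathrm{cav}}))^4\rangle_{\mathrm{cav}}=3n^{-2}\langle\|\sigma-\bm^{\mathrm{cav}}\|^4\rangle_{\mathrm{cav}}\ge3n^{-2}(\operatorname{Tr}\bC^{\mathrm{cav}})^2\approx3(N(1-q)/n)^2$, which is not $O(n^{-2}\operatorname{Tr}((\bC^{\mathrm{cav}})^2))$. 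Only the deviation of the tilted cavity law of $Y_i$ from a centered Gaussian is small. That deviation must be measured in $L^2$, with the random tilt appearing in both the numerator and the denominator of $\langle Y_ie^{sY_i}\rangle_{\mathrm{cav}}/\langle e^{sY_i}\rangle_{\mathrm{cav}}$. A H\"older split on an operator-norm event gives only boundedness, not the required $O(n^{-1/2})$ size. The remainder in Lemma~\ref{lem:cavity-moment-input} controls overlap moments, not the log-moment generating function of the cavity field.

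The missing idea is to route both errors through one function and compute its second moment exactly over the removed row. Put $\cZ_i(s)=\langle e^{sY_i}\rangle_{\mathrm{cav}}$. Use the annealed coefficient $v_i=\frac{\lambda^2}{n}(N-\|\bm^{\mathrm{cav}}\|^2)$ in place of your quenched one; the difference is handled by your own Onsager computation. Set $D_i(s)=\cZ_i'(s)/\cZ_i(s)-sv_i$ and $p_i=(1+m_i)/2$. Since $\log\cZ_i(1)-\log\cZ_i(-1)=\int_{-1}^1D_i(s)\,\de s$, the exact identities give
\[
\partial_i\Phi_k=\tfrac12\int_{-1}^1D_i(s)\,\de s-p_iD_i(1)-(1-p_i)D_i(-1)+(a_{k/n}-v_i-J_{ii})m_i,
\]
so no cumulant series ever appears. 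By Jensen $\cZ_i\ge1$, hence $|D_i(s)|\le|\cZ_i'(s)-sv_i\cZ_i(s)|$.

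Now introduce the deterministically bounded, cavity-centered quantities
$d(\sigma)=\frac{\lambda^2}{n}\theta_i\sum_j\theta_j(\sigma_j-m_j^{\mathrm{cav}})$, $e(\sigma)=-\frac{2\lambda^2}{n}\langle\bm^{\mathrm{cav}},\sigma-\bm^{\mathrm{cav}}\rangle$ and $c(\sigma,\tau)=\frac{\lambda^2}{n}\langle\sigma-\bm^{\mathrm{cav}},\tau-\bm^{\mathrm{cav}}\rangle$. Let $U_{s,t}=sd(\sigma)+td(\tau)+\frac{s^2}{2}e(\sigma)+\frac{t^2}{2}e(\tau)+st\,c(\sigma,\tau)$. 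The Gaussian moment generating function of the removed row gives $\bbE_{\bw}[\cZ_i(s)\cZ_i(t)]=e^{v_i(s^2+t^2)/2}\langle e^{U_{s,t}}\rangle_{\mathrm{cav}}^{\otimes2}$. Applying $(\partial_s-v_is)(\partial_t-v_it)$ and then setting $t=s$ writes $\bbE_{\bw}|\cZ_i'(s)-sv_i\cZ_i(s)|^2$ as a two-replica average of products of $d,e,c$.

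Using $\langle c\rangle_{\mathrm{cav}}^{\otimes2}=0$ and $|e^u-1|\le C_\lambda|u|$ for bounded $u$, this is at most
$C_\lambda n^{-2}\{\theta^\top\bC^{\mathrm{cav}}\theta+(\bm^{\mathrm{cav}})^\top\bC^{\mathrm{cav}}\bm^{\mathrm{cav}}+\operatorname{Tr}((\bC^{\mathrm{cav}})^2)\}$. Its expectation is $O(N/n^2)$ by \eqref{eq:tap-cavity-covariance-moments}. This gives the per-site bound $C_\lambda/n$ with no Taylor expansion in $Y_i$, no H\"older step, and no norm event.
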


We will prove Lemma~\ref{lem:tap-gradient} via the following cavity calculation. Fix $k$ and a free site $i\in R_k$. Delete site $i$ and all its incident interactions, retaining the fields $h_j^{(k)}(\theta_{S_k})$ for $j\in R_k\setminus\{i\}$. Denote the resulting Gibbs average by $\langle\cdot\rangle_{-i}$, its mean by $\bm^{-i}$, and its covariance by $\bC^{-i}$. In this part of the proof put $N=n-k-1$, initially assuming $N\ge1$. This is the model in Lemma~\ref{lem:uniform-pinned-overlap} with $d=1$; the normalization of each interaction remains $\lambda^2/n$, not $\lambda^2/N$. Let $\cF_{-i}=\sigma(\theta,I,(J_{ab})_{a\le b,\ a,b\ne i})$. The cavity mean and covariance are measurable with respect to $\cF_{-i}$. Conditional on $\cF_{-i}$, the removed row has the independent representation
\[
    J_{ij}=\frac{\lambda^2}{n}\theta_i\theta_j +\frac{\lambda}{\sqrt n}g_j, \qquad j\in R_k\setminus\{i\}, \qquad g_j\overset{\mathrm{i.i.d.}}\sim\cN(0,1).
\]
Define, for $-1\le s\le1$,
\begin{align*}
    Y_i(\sigma)&=\sum_{j\in R_k\setminus\{i\}} J_{ij}(\sigma_j-m_j^{-i}),\qquad v_i=\frac{\lambda^2}{n}(N-\|\bm^{-i}\|^2),\\
    \cZ_i(s)&=\langle e^{sY_i(\sigma)}\rangle_{-i},\qquad D_i(s)=\frac{\cZ_i'(s)}{\cZ_i(s)}-s v_i.
\end{align*}
Since $\langle Y_i\rangle_{-i}=0$, Jensen's inequality gives $\cZ_i(s)\ge1$. We will first show the following estimate.
\begin{lem}{\label{lem:bound-D-i}}
    We have
    \begin{equation}\label{eq:tap-centered-cavity-field}
        \sup_{|s|\le1}\bbE|D_i(s)|^2 \le\frac{C_\lambda N}{n^2}.
    \end{equation}
\end{lem}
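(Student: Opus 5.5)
The plan is to work conditionally on $\cF_{-i}$, where the cavity Gibbs measure $\langle\cdot\rangle_{-i}$, its mean $\bm^{-i}$ and covariance $\bC^{-i}$ are frozen. The only randomness left is the removed row $J_{ij}=\frac{\lambda^2}{n}\theta_i\theta_j+\frac{\lambda}{\sqrt n}g_j$. Write $\langle\cdot\rangle_s$ for the tilted measure with density $e^{sY_i}/\cZ_i(s)$ relative to $\langle\cdot\rangle_{-i}$, so that $\cZ_i'(s)/\cZ_i(s)=\langle Y_i\rangle_s$.

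Decompose $Y_i=a_i+b_i$ into two parts. The signal part is
\[
a_i(\sigma)=\frac{\lambda^2}{n}\theta_i\langle\theta,\sigma-\bm^{-i}\rangle .
\]
The Gaussian part is
\[
b_i(\sigma)=\frac{\lambda}{\sqrt n}\sum_j g_j(\sigma_j-m^{-i}_j).
\]
Then $D_i(s)=\langle a_i\rangle_s+\{\langle b_i\rangle_s-sv_i\}$, and we bound the $L^2$ norm of each bracket separately.

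\textbf{Gaussian part.} Gaussian integration by parts in $g$ gives
\[
\bbE_g\langle b_i\rangle_s=s\frac{\lambda^2}{n}\,\bbE_g\bigl\langle\|\sigma-\bm^{-i}\|^2-(\sigma-\bm^{-i})\cdot(\tau-\bm^{-i})\bigr\rangle_s ,
\]
where $\tau$ is an independent replica. At $s=0$ the bracket has cavity average $\operatorname{Tr}\bC^{-i}=N-\|\bm^{-i}\|^2$, which matches $v_i$ exactly. The deviations are controlled by two quantities. One is the fluctuation of $\|\sigma-\bm^{-i}\|^2=N-2\langle\sigma,\bm^{-i}\rangle+\|\bm^{-i}\|^2$ around its mean, whose variance is $4(\bm^{-i})^\top\bC^{-i}\bm^{-i}$. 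The other is the centered replica overlap $(\sigma-\bm^{-i})\cdot(\tau-\bm^{-i})$, whose second moment is $\operatorname{Tr}((\bC^{-i})^2)$. Both are at most $C_\lambda N$ in expectation by \eqref{eq:tap-cavity-covariance-moments} of Lemma~\ref{lem:uniform-pinned-overlap} with $d=1$, which is exactly this cavity model. Multiplying by $(\lambda^2/n)^2$ gives the target order $N/n^2$.

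For the second moment rather than the mean, I would expand $\bbE_g(\langle b_i\rangle_s-sv_i)^2$ and integrate by parts once more. This produces only the variance term $\frac{\lambda^2}{n}\bbE_g\langle(\sigma-\bm^{-i})\cdot(\sigma-\tau)\rangle_s$, together with four-replica polynomials of the same two centered quadratic forms, each carrying a factor $\lambda^4/n^2$.

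The variance term is not automatically $O(N/n^2)$: $\langle(\sigma-\bm^{-i})\cdot(\sigma-\tau)\rangle_s$ is typically of order $N$, so this term is of order $N/n$. I expect it to cancel against the Gaussian-part contribution $2\bbE_g[\langle b_i\rangle_s(\langle b_i\rangle_s-sv_i)]$. This cancellation should use the identity $\frac{d}{ds}\langle b_i\rangle_s=\langle b_i^2\rangle_s-\langle b_i\rangle_s^2$. Verifying that this order-$N/n$ term really cancels, and that only order-$N/n^2$ residuals remain, is the first step I would check before writing the details.

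\textbf{Signal part.} By Cauchy--Schwarz,
\[
\langle a_i\rangle_s^2\le\frac{\lambda^4}{n^2}\,\langle(\theta\cdot(\sigma-\bm^{-i}))^2\rangle_s .
\]
At $s=0$ this equals $\frac{\lambda^4}{n^2}\theta^\top\bC^{-i}\theta$, which is again $O(N/n^2)$ in expectation by \eqref{eq:tap-cavity-covariance-moments}.

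\textbf{Removing the tilt.} The main obstacle is passing from the untilted cavity averages to the tilted ones $\langle\cdot\rangle_s$, uniformly in $|s|\le1$. I would use $\cZ_i(s)\ge1$, so the density of $\langle\cdot\rangle_s$ is at most $e^{sY_i}$, and then apply H\"older:
\[
\langle f\rangle_s\le\langle f^2\rangle_{-i}^{1/2}\langle e^{2sY_i}\rangle_{-i}^{1/2}.
\]
Conditionally on $\sigma$, the variable $b_i$ is Gaussian with variance $\lambda^2\|\sigma-\bm^{-i}\|^2/n\le4\lambda^2$, and $|a_i|\le2\lambda^2$. Hence $\bbE_g\langle e^{4sY_i}\rangle_{-i}\le C_\lambda$.

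The cost of this step is that it requires fourth-moment analogues of the covariance bounds, for example
\[
\bbE\langle(\theta\cdot(\sigma-\bm^{-i}))^4\rangle_{-i}\le C_\lambda N^2,
\]
and similarly for the two quadratic forms above. I would first try to obtain these from the fixed-tolerance tail estimate \eqref{eq:uniform-pinned-overlap-tail} together with the cavity system. Concretely, I would rerun the argument of Lemma~\ref{lem:cavity-moment-input} with higher-order replica moments, or bootstrap from the $O(1/N)$ second moment using $|R_{12}-q|\le2$ to get $\bbE\langle(R_{12}-q)^4\rangle\le C_\lambda N^{-2}$.

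If that fails, the fallback is to avoid H\"older altogether. I would write $D_i(s)=D_i(0)+\int_0^s D_i'(u)\,\mathrm du$, with $D_i(0)=0$ (since $\langle Y_i\rangle_{-i}=0$) and
\[
D_i'(u)=\langle Y_i^2\rangle_u-\langle Y_i\rangle_u^2-v_i ,
\]
and estimate this tilted variance by the same integration-by-parts computation. This reduces everything to centered quadratic forms whose moments are covered by Lemma~\ref{lem:uniform-pinned-overlap}.

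Finally, I would take expectation over $\cF_{-i}$, which gives $\sup_{|s|\le1}\bbE|D_i(s)|^2\le C_\lambda N/n^2$. The degenerate case $N=0$ is trivial, since then $Y_i\equiv0$ and $v_i=0$.
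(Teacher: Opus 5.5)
Your proposal has a genuine gap: every estimate runs through the tilted measure $\langle\cdot\rangle_s$, and nothing in the paper controls it. At $s=\pm1$ it is the cavity model with site $i$ pinned to $\pm1$. Only the sign $\theta_i$ gives a planted model to which Lemma~\ref{lem:nishimori-input} and Lemma~\ref{lem:uniform-pinned-overlap} apply.

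Concretely:
\begin{itemize}
\item In the Gaussian part, the cancellation of the order-$N/n$ term is only conjectured. Even granting it, the residuals are tilted averages of the quadratic forms, not the untilted moments in \eqref{eq:tap-cavity-covariance-moments}.
\item Your H\"older step needs fourth moments such as $\bbE\langle(\theta\cdot(\sigma-\bm^{-i}))^4\rangle_{-i}\le C_\lambda N^2$, which are not available. The proposed bootstrap cannot produce them: from $|R_{12}-q|\le2$ and $\bbE\langle(R_{12}-q)^2\rangle\le C_\lambda/N$ you only get $\bbE\langle(R_{12}-q)^4\rangle\le 4C_\lambda/N$, not $O(N^{-2})$. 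A higher-order cavity system would be a separate, unproven project.
\item The fallback through $D_i'(u)=\langle Y_i^2\rangle_u-\langle Y_i\rangle_u^2-v_i$ still involves tilted variances. It relocates the difficulty rather than removing it.
\end{itemize}

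The missing idea is to discard the normalization before averaging over the removed row. Since $\cZ_i(s)\ge1$, you have $|D_i(s)|\le|\cZ_i'(s)-sv_i\cZ_i(s)|$. The right-hand side is an untilted cavity average, so its $\bbE_{\bg}$-second moment can be computed exactly with two replicas. Set
\begin{itemize}
\item $d(\sigma)$ equal to your signal part $a_i(\sigma)$;
\item $e(\sigma)=\frac{\lambda^2}{n}\|\sigma-\bm^{-i}\|^2-v_i$;
\item $c=\frac{\lambda^2}{n}\langle\sigma-\bm^{-i},\tau-\bm^{-i}\rangle$;
\item $U_{s,t}=s\,d(\sigma)+t\,d(\tau)+\frac{s^2}{2}e(\sigma)+\frac{t^2}{2}e(\tau)+st\,c$.
\end{itemize}
The Gaussian moment generating function then gives $\bbE_{\bg}[\cZ_i(s)\cZ_i(t)]=e^{v_i(s^2+t^2)/2}\langle e^{U_{s,t}}\rangle_{-i}^{\otimes2}$. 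Apply $(\partial_s-v_is)(\partial_t-v_it)$ and set $t=s$. This expresses $\bbE_{\bg}|\cZ_i'(s)-sv_i\cZ_i(s)|^2$ as $e^{v_is^2}$ times the untilted two-replica average of $\{[d(\sigma)+se(\sigma)+sc][d(\tau)+se(\tau)+sc]+c\}e^{U_{s,s}}$. Here $d,e,c$ are bounded by $C_\lambda$ and have zero cavity mean.

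The cancellation you were hoping for happens in this step. The order-$N/n$ diagonal variance is $\frac{\lambda^2}{n}\|\sigma-\bm^{-i}\|^2=v_i+e(\sigma)$. The deterministic factor $e^{v_i(s^2+t^2)/2}$ splits off, and the correction $-sv_i$ differentiates it away exactly, leaving only $e(\sigma)$ under an untilted average. The single first-order term $c\,e^{U_{s,s}}$ becomes second order via $\langle c\rangle_{-i}^{\otimes2}=0$ and $|e^u-1|\le C_\lambda|u|$. Only $\theta^\top\bC^{-i}\theta$, $(\bm^{-i})^\top\bC^{-i}\bm^{-i}$ and $\operatorname{Tr}((\bC^{-i})^2)$ then appear, and \eqref{eq:tap-cavity-covariance-moments} controls them.

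The same device would already fix your signal part without fourth moments. You have $\langle a_i\rangle_s^2\le\langle a_i^2e^{sY_i}\rangle_{-i}$. Since $a_i$ does not depend on $\bg$, $\bbE_{\bg}\langle a_i^2e^{sY_i}\rangle_{-i}\le C_\lambda\langle a_i^2\rangle_{-i}$. That is, use Fubini in $\bg$ instead of H\"older in $\sigma$.
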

\begin{proof}
Here is the Gaussian calculation, with all conditioning taken before the removed row is observed. For two independent cavity replicas $\sigma,\tau$, set
\begin{align*}
    d(\sigma)&=\frac{\lambda^2}{n}\theta_i \sum_{j\in R_k\setminus\{i\}}\theta_j(\sigma_j-m_j^{-i}),\\
    e(\sigma)&=\frac{\lambda^2}{n}\|\sigma-\bm^{-i}\|^2-v_i =-\frac{2\lambda^2}{n} \langle\bm^{-i},\sigma-\bm^{-i}\rangle,\\
    c(\sigma,\tau)&=\frac{\lambda^2}{n} \langle\sigma-\bm^{-i},\tau-\bm^{-i}\rangle.
\end{align*}
These variables are bounded in absolute value by $C_\lambda$, and each has zero cavity average. For auxiliary real parameters $s,t\in[-1,1]$, put
\[
    U_{s,t}=s d(\sigma)+t d(\tau) +\frac{s^2}{2}e(\sigma)+\frac{t^2}{2}e(\tau) +st c(\sigma,\tau).
\]
Note that
\[
    s Y_i(\sigma) + t Y_i(\tau) = sd(\sigma) +td(\tau) + \frac{\lambda}{\sqrt n} \sum_{ j \in R_k \setminus \{ i \} } g_j \big[ s(\sigma_j-m_j^{-i}) + t(\tau_j-m_j^{-i}) \big].
\]
Writing $\bbE_{\bg}$ for expectation over the removed standard Gaussian row $(g_j)_{j\in R_k\setminus\{i\}}$ only, with $\cF_{-i}$ held fixed, its moment generating function gives the exact identity. Equivalently, this averages the original row $(W_{ij})_{j\in R_k\setminus\{i\}}$, with $g_j=\sqrt n\,W_{ij}$:
\begin{align*}
    \bbE_{\bg}[\cZ_i(s)\cZ_i(t)]
       &=\left\langle \bbE_{\bg}e^{sY_i(\sigma)+tY_i(\tau)}
                    \right\rangle_{-i}^{\otimes2}\\
       &=e^{v_i(s^2+t^2)/2}
                    \langle e^{U_{s,t}}\rangle_{-i}^{\otimes2}.
\end{align*}
Apply $(\partial_s-v_i s)(\partial_t-v_i t)$ and then set $t=s$. Abbreviating $c=c(\sigma,\tau)$ inside the next display, we obtain
\[
    \bbE_{\bg}|\cZ_i'(s)-s v_i\cZ_i(s)|^2
    =e^{v_i s^2}\left\langle
       \big\{[d(\sigma)+s e(\sigma)+sc]
              [d(\tau)+s e(\tau)+sc]+c\big\}
                  e^{U_{s,s}}\right\rangle_{-i}^{\otimes2}.
\]
The product term is bounded by a constant times the sum of the second moments of $d,e,c$. For the remaining term, use $\langle c\rangle_{-i}^{\otimes2}=0$ to replace $e^{U_{s,s}}$ by $e^{U_{s,s}}-1$, and then use $|e^u-1|\le C_\lambda|u|$ for $|u|\le C_\lambda$. Therefore, since $\cZ_i(s)\ge1$,
\[
\begin{split}
    \bbE_{\bg}|D_i(s)|^2
    &\le C_\lambda\left\langle
        d(\sigma)^2+e(\sigma)^2+c(\sigma,\tau)^2
                      \right\rangle_{-i}^{\otimes2}\\
    &\le\frac{C_\lambda}{n^2}\left\{
       \theta_{R_k\setminus\{i\}}^\top\bC^{-i}
                  \theta_{R_k\setminus\{i\}}
       +(\bm^{-i})^\top\bC^{-i}\bm^{-i}
       +\operatorname{Tr}((\bC^{-i})^2)\right\}.
\end{split}
\]
Averaging over the cavity observations and the planted vector and applying Lemma~\ref{lem:uniform-pinned-overlap}, specifically \eqref{eq:tap-cavity-covariance-moments}, proves \eqref{eq:tap-centered-cavity-field}.
\end{proof}

\begin{proof}[Proof of Lemma~\ref{lem:tap-gradient}]
Using $\widehat{\bh}^{(k)}(\theta_{S_k})=\bh^{(k)}(\theta_{S_k})$, $\boldsymbol{\ell}^{(k)}(\theta_{S_k}) = \operatorname{arctanh}(\bm^{(k)}(\theta_{S_k}))$ and differentiating in the optimization variable gives
\[
    \nabla_{\bu}\Phi_k(\bm^{(k)}(\theta_{S_k});\theta_{S_k}) =\boldsymbol{\ell}^{(k)}(\theta_{S_k}) -\bJ_{R_k,R_k}\bm^{(k)}(\theta_{S_k})-\bh^{(k)}(\theta_{S_k}) +a_{k/n}\bm^{(k)}(\theta_{S_k}).
\]
We now relate the centered cavity field to the full marginal and identify the Onsager correction. Let $p_i=(1+m_i^{(k)}(\theta_{S_k}))/2$ and $b_i=\sum_{j\in R_k\setminus\{i\}}J_{ij}m_j^{-i}$. Conditioning the full posterior on the sign at $i$ gives the exact identities
\[
\begin{split}
    \ell_i^{(k)}(\theta_{S_k})
       &=h_i^{(k)}(\theta_{S_k})+b_i
          +\frac12\{\log\cZ_i(1)-\log\cZ_i(-1)\},\\
    \sum_{j\in R_k\setminus\{i\}}J_{ij}m_j^{(k)}(\theta_{S_k})
       &=b_i+p_i\frac{\cZ_i'(1)}{\cZ_i(1)}
          +(1-p_i)\frac{\cZ_i'(-1)}{\cZ_i(-1)}.
\end{split}
\]
Since $2p_i-1=m_i^{(k)}(\theta_{S_k})$ and
\[
    \log\cZ_i(1) - \log\cZ_i(-1) = \int_{-1}^{1} \frac{ \cZ_i'(s) }{ \cZ_i(s) } \de s = \int_{-1}^{1} \big( D_i(s)+ v_i s \big) \de s = \int_{-1}^{1} D_i(s)\,\de s,
\]
substitution into the gradient gives
\[
    (\nabla_{\bu}\Phi_k(\bm^{(k)}(\theta_{S_k});\theta_{S_k}))_i =\frac12\int_{-1}^1D_i(s) \de s -p_iD_i(1)-(1-p_i)D_i(-1) +(a_{k/n}-v_i-J_{ii})m_i^{(k)}(\theta_{S_k}).
\]
For the deterministic-coefficient comparison, let $q$ be the cavity fixed point used above. Since $|q-q_{k/n}|\le C_\lambda/n$,
\[
    a_{k/n}-v_i =\frac{\lambda^2}{n}\left\{ \|\bm^{-i}\|^2-Nq+(1-q_{k/n}) +N(q-q_{k/n})\right\}.
\]
Lemma~\ref{lem:uniform-pinned-overlap}, through \eqref{eq:tap-cavity-covariance-moments}, consequently gives
$\bbE|a_{k/n}-v_i|^2\le C_\lambda(N+1)/n^2$. Since $|m_i^{(k)}(\theta_{S_k})|\le1$,
\[
    \bbE|J_{ii}m_i^{(k)}(\theta_{S_k})|^2 \le\bbE J_{ii}^2 =\frac{2\lambda^2}{n}+\frac{\lambda^4}{n^2}.
\]
Using $0\le p_i\le1$, Cauchy--Schwarz for the integral, and \eqref{eq:tap-centered-cavity-field}, we conclude that
\[
    \bbE\big|(\nabla_{\bu}\Phi_k(\bm^{(k)}(\theta_{S_k});\theta_{S_k}))_i\big|^2 \le C_\lambda\left\{\frac{N+1}{n^2}+\frac1n\right\} \le\frac{C_\lambda}{n}.
\]
If $N=0$, the cavity sums are empty, $\ell_i^{(k)}(\theta_{S_k})=h_i^{(k)}(\theta_{S_k})$, and the residual is simply $(a_{k/n}-J_{ii})m_i^{(k)}(\theta_{S_k})$; the same bound follows from $a_{k/n}\le\lambda^2/n$ and the displayed diagonal estimate. Exchangeability allows the exposed set and the free site to be fixed when computing each annealed expectation. Summing over the $n-k$ free sites and then averaging over the independent revealing order proves
\[
    \bbE\|\nabla_{\bu}\Phi_k(\bm^{(k)}(\theta_{S_k});\theta_{S_k})\|^2 \le C_\lambda\frac{n-k}{n}.
\]
The finitely many smaller dimensions are absorbed by enlarging $C_\lambda$; their residuals have finite second moments because the exact marginal logit is bounded by the absolute row sum of $\bJ$. 
\end{proof}

\subsection{From the TAP residual to the proposal logits} \label{ss:sampling-guarantee}

We now prove Lemma~\ref{lem:construct-scalar} using Lemmas~\ref{lem:convexity-barycenter}, \ref{lem:optimization-accuracy}, and~\ref{lem:tap-gradient}. We compare $\widehat{\boldsymbol\ell}^{(k)}(\theta_{S_k})$ with $\boldsymbol\ell^{(k)}(\theta_{S_k})$, both at the same posterior-distributed prefix. In particular, the input field is $\widehat{\bh}^{(k)}(\theta_{S_k})=\bh^{(k)}(\theta_{S_k})$, and the optimization is over $\cK_k(\theta_{S_k})$. The gradient of the objective defined in Section~\ref{ss:conditional-tap} then gives the exact identity
\begin{equation}{\label{eq:gradient-identity}}
    \begin{aligned}
    \boldsymbol\ell^{(k)}(\theta_{S_k})
       &=\operatorname{arctanh}(\bm^{(k)}(\theta_{S_k}))\\
       &=\bh^{(k)}(\theta_{S_k})
          +(\bJ_{R_k,R_k}-a_{k/n}\bI)\bm^{(k)}(\theta_{S_k})
          +\nabla_{\bu}\Phi_k(\bm^{(k)}(\theta_{S_k});\theta_{S_k}).
    \end{aligned}
\end{equation}

\begin{proof}[Proof of Lemma~\ref{lem:construct-scalar}]
We use the domain conclusion of Lemma~\ref{lem:convexity-barycenter} and the separate TAP-residual estimate in Lemma~\ref{lem:tap-gradient}. Enlarge the exceptional events, if necessary, to include failures of the initial computation and all safeguards encountered so far. This preserves prefix measurability and $b_n=o(1)$ for a fixed valid record. The numerical stability and fallback estimates needed here are verified in Appendix~\ref{ss:tuning-free-sampler}, using the precision rules of Appendix~\ref{ss:precision-implementation}. Fix $k$ and a realization of the target prefix $\theta_{S_k}$ for which $G_k$ holds, and let $\bu_*(\theta_{S_k})$ minimize $\Phi_k(\cdot;\theta_{S_k})$ over $\cK_k(\theta_{S_k})$. The minimizer exists because $\cK_k(\theta_{S_k})$ is nonempty and compact, and is unique by strong convexity. Since $\bm^{(k)}(\theta_{S_k})\in\cK_k(\theta_{S_k})$, the variational inequality at $\bu_*(\theta_{S_k})$ gives
\[
\begin{split}
    \kappa_\lambda\|\bm^{(k)}(\theta_{S_k})-\bu_*(\theta_{S_k})\|^2
    &\le\left\langle
       \nabla_{\bu}\Phi_k(\bm^{(k)}(\theta_{S_k});\theta_{S_k})-\nabla_{\bu}\Phi_k(\bu_*(\theta_{S_k});\theta_{S_k}),
       \bm^{(k)}(\theta_{S_k})-\bu_*(\theta_{S_k})\right\rangle\\
    &\le\left\langle\nabla_{\bu}\Phi_k(\bm^{(k)}(\theta_{S_k});\theta_{S_k}),
       \bm^{(k)}(\theta_{S_k})-\bu_*(\theta_{S_k})\right\rangle.
\end{split}
\]
Hence $\|\bm^{(k)}(\theta_{S_k})-\bu_*(\theta_{S_k})\|\le\kappa_\lambda^{-1}\|\nabla_{\bu}\Phi_k(\bm^{(k)}(\theta_{S_k});\theta_{S_k})\|$. The optimization guarantee \eqref{eq:sampling-optimization-accuracy} and \eqref{eq:constraint-strong-convexity}, together with the variational inequality at $\bu_*(\theta_{S_k})$, also yield
\[
    \frac{\kappa_\lambda}{2}\|\widehat{\bm}^{(k)}(\theta_{S_k})-\bu_*(\theta_{S_k})\|^2
       \le\Phi_k(\widehat{\bm}^{(k)}(\theta_{S_k});\theta_{S_k})-\Phi_k(\bu_*(\theta_{S_k});\theta_{S_k})
       \le n^{-12}.
\]
Note that \eqref{eq:gradient-identity} and \eqref{eq:sampling-proposal-logits} use the same input $\theta_{S_k}$, so their field terms cancel exactly as $\widehat{\bh}^{(k)}(\theta_{S_k})=\bh^{(k)}(\theta_{S_k})$. Including the final numerical error gives
\[
\begin{aligned}
    \widehat{\boldsymbol\ell}^{(k)}(\theta_{S_k})
        -\boldsymbol\ell^{(k)}(\theta_{S_k})
       &=(\bJ_{R_k,R_k}-a_{k/n}\bI)
          \bigl(\widehat{\bm}^{(k)}(\theta_{S_k})-\bm^{(k)}(\theta_{S_k})\bigr)\\
       &\qquad-\nabla_{\bu}\Phi_k(\bm^{(k)}(\theta_{S_k});\theta_{S_k})
          +\be^{(k)}(\theta_{S_k}),
\end{aligned}
\]
where $\|\be^{(k)}(\theta_{S_k})\|\le n^{-100}$.
On $G_k$, the operator norm of $\bJ_{R_k,R_k}-a_{k/n}\bI$ is bounded by a constant depending only on $\lambda$. Combining the preceding estimates, and absorbing the prescribed numerical errors, gives
\[
    \big\|\widehat{\boldsymbol\ell}^{(k)}(\theta_{S_k})-\boldsymbol\ell^{(k)}(\theta_{S_k})\big\|^2
       \le C_\lambda\left\{
          \|\nabla_{\bu}\Phi_k(\bm^{(k)}(\theta_{S_k});\theta_{S_k})\|^2+n^{-12}\right\}.
\]
Taking expectations and using the TAP-residual bound proves the first inequality in \eqref{eq:vector-logit-error}, since $n-k\ge1$.

The exact conditional mean $\bm^{(k)}(\theta_{S_k})$ is $\cH_k$-measurable: conditioning additionally on the unused portion of the independent permutation does not change it. The computed vector $\widehat{\boldsymbol\ell}^{(k)}(\theta_{S_k})$ and $G_k$ are also $\cH_k$-measurable, while $I_{k+1}$ is conditionally uniform on $R_k$. Therefore
\[
    \bbE\left[\bone_{G_k}
       \big|\widehat\ell^{(k)}_{I_{k+1}}(\theta_{S_k})-\ell^{(k)}_{I_{k+1}}(\theta_{S_k})\big|^2
       \,\middle|\,\cH_k\right]
       =\frac{\bone_{G_k}}{n-k}
          \big\|\widehat{\boldsymbol\ell}^{(k)}(\theta_{S_k})-\boldsymbol\ell^{(k)}(\theta_{S_k})\big\|^2.
\]
This proves the second inequality in \eqref{eq:vector-logit-error}. 
Moreover, summing the first inequality after division by $n-k$ gives
\[
    \bbE\sum_{k=k_0}^{n-1}\frac{\bone_{G_k}}{n-k}
       \big\|\widehat{\boldsymbol\ell}^{(k)}(\theta_{S_k})-\boldsymbol\ell^{(k)}(\theta_{S_k})\big\|^2
       \le C_\lambda.
\]
On $\cap_{k=k_0}^{n-1}G_k$ all the indicators equal one. Markov's inequality and the probability $b_n$ of its complement prove the asserted pathwise, averaged posterior-mass estimate.

Finally, every conditional factor in \eqref{eq:def-nu} is positive. To evaluate $Q(x)$, replay exactly the same deterministic computations using the supplied prefix $x_{S_k}$, including its phase orientation and checkpoint centers. This requires at most $n$ polynomial-time prefix computations; no enumeration of other prefixes or access to previous proposals is needed. The capped routines specify the proposal on every input, and their failures are included in the exceptional events above. Thus both sampling and density evaluation take polynomial time.
\end{proof}

\section{Completion of the proof}\label{s:completion}

We now combine the proposal guarantee with the Metropolis correction. For a fixed valid rational record, Lemma~\ref{lem:nu-is-warm} gives the required warm proposal directly. To obtain the algorithm with input only $(\bY,\lambda)$, Appendix~\ref{ss:tuning-free-sampler} constructs $n$ candidate proposals, one of which, say $Q_{a_*}$, comes from a fixed valid record for all sufficiently large $n$. Their average satisfies $Q(x)\ge Q_{a_*}(x)/n$ for every $x$. Thus the warmness threshold increases by only a factor $n$, and no accuracy guarantee for the other candidates is needed. We retain both versions in the algorithm below; the binary encoding and numerical details are deferred to the appendices.

\begin{proof}[Proof of Theorem~\ref{thm:sampling}]
Fix $\lambda>1$. We describe both the fixed-record and the tuning-free versions of the sampler. In the former, use a fixed valid rational record obtained from Lemma~\ref{lem:convexity-barycenter} as explained in Appendix~\ref{ss:tuning-free-sampler}, and put $d=1$. In the latter, put $d=n$ and use the first $n$ capped records in Appendix~\ref{ss:tuning-free-sampler}, assigning the uniform proposal to every record that fails its checks. For each retained record, fix the deterministic numerical rules and compute its initial stored vector before drawing the common independent order $I$. Conditional on $(\bJ,I)$, let $Q_1,\ldots,Q_d$ denote the resulting component proposals, including their prescribed suffix fallbacks, and set
\[
    Q(s)=\frac1d\sum_{a=1}^d Q_a(s),\qquad s\in\{-1,1\}^n.
\]
All records, initial stored vectors, and the order are kept fixed throughout the Metropolis run.

In Algorithm~\ref{alg:posterior-sampler}, the subroutine \textsc{DrawAndScore} first chooses $a$ uniformly from $[d]$ and generates $s$ from $Q_a$ by Algorithm~\ref{alg:sequential-tap} in \textsc{Sample} mode. It then evaluates $Q_b(s)$ for every $b\in[d]$ by the same algorithm in \textsc{Evaluate} mode, and returns $(s,\ell_Q)$, where
\[
    \ell_Q=\log Q(s)
       =\operatorname{logsumexp}_{1\le b\le d}\{\log Q_b(s)\}-\log d,
    \qquad
    \operatorname{logsumexp}_b\{u_b\}=\log\sum_b e^{u_b}.
\]
The selected component's log-density may be reused. Every other evaluation starts its prefix computations afresh, using the supplied configuration. 

\medskip
\begin{breakablealgorithm}
\caption{Posterior sampling by sequential TAP proposals and Metropolis correction}
\label{alg:posterior-sampler}
\begin{algorithmic}[1]
\Require Observation $\bY$ and known $\lambda>1$; optionally, a fixed valid rational tuning record.
\Ensure A configuration $\widehat\theta\in\{-1,1\}^n$ in the original coordinate labels.
\If{$n=1$}
    \State \Return a uniform sign.
\EndIf
\State Set $\bJ\gets\lambda\bY/\sqrt n$.
\State Apply the input and norm safeguard of Appendix~\ref{ss:input-safeguard}; on failure return an independent uniform configuration.
\State Compute $k_0$ by \eqref{eq:sampling-prefix-length}; attempt the shared precomputation of $a_{k/n}$ for $k_0\le k<n$ and record any failure.
\State Use the prescribed record ($d=1$), or the first $n$ capped records ($d=n$), as in Appendix~\ref{ss:tuning-free-sampler}.
\State On shared-precomputation failure assign every component the uniform proposal; otherwise apply the record checks in Appendix~\ref{ss:tuning-free-sampler}.
\State Precompute and store the initial vector of each remaining component; on failure assign that component the uniform proposal.
\State Draw one independent uniform permutation $I$; keep it and all initial stored vectors fixed thereafter.
\State Set $Q\gets d^{-1}\sum_{a=1}^dQ_a$ and $T_n\gets\lceil2^{k_0}dn^2\rceil$.
\State $(s,\ell_Q)\gets\textsc{DrawAndScore}(\bJ,I,Q_1,\ldots,Q_d)$; set $H\gets H_{\bJ}(s)$.
\For{$t=1,\ldots,T_n$}
    \State Independently draw $(s',\ell'_Q)\gets\textsc{DrawAndScore}(\bJ,I,Q_1,\ldots,Q_d)$.
    \State $H'\gets H_{\bJ}(s')$ and $\Delta\gets H'-H+\ell_Q-\ell'_Q$.
    \State Draw $U\sim\operatorname{Unif}(0,1)$ independently.
    \If{$\log U\le\min\{0,\Delta\}$}
        \State $(s,H,\ell_Q)\gets(s',H',\ell'_Q)$.
    \EndIf
\EndFor
\State \Return $\widehat\theta=s$.
\end{algorithmic}
\end{breakablealgorithm}
\medskip

By the component construction and replay argument in Appendix~\ref{ss:tuning-free-sampler}, \textsc{DrawAndScore} produces independent $Q$-distributed proposals together with their mixture scores, conditional on the frozen data and order. The normalization and positivity conclusions of Lemma~\ref{lem:tuning-free-proposal} apply to every component, including those assigned a uniform suffix or the uniform law.

For the warmness estimate, apply Lemma~\ref{lem:tuning-free-proposal} with $u=\log n$. Since $\bJ=\lambda\bY/\sqrt n$ is an invertible deterministic transformation of the observation, $\mu_\bJ=\mu_\bY$. With
\[
    M=2^{k_0}dn,\qquad
    \delta=\mu_\bY\left(\frac{\mu_\bY}{Q}>M\right),
\]
we therefore have
\[
    \bbE_{\bY,I}\delta
       \le b_n+\frac{C_\lambda+1/e}{\log n}=o(1).
\]
Here $b_n=o(1)$ includes the exceptional initialization and numerical events for the one fixed valid component used in that lemma. If $k_0=n$, the mixture is uniform and $\delta=0$ directly.

We can now apply the Metropolis correction. Fix $(\bY,I)$ for which the input and norm safeguard does not trigger. With the proposal $Q$ frozen, the acceptance probability in Algorithm~\ref{alg:posterior-sampler} is
\[
    \min\left\{1,
       e^{H_{\bJ}(s')-H_{\bJ}(s)}\frac{Q(s)}{Q(s')}\right\}
       =\min\left\{1,
         \frac{\mu_{\bY}(s')Q(s)}{\mu_{\bY}(s)Q(s')}\right\}.
\]
The posterior normalizing constant cancels. Hence this is precisely Algorithm~\ref{alg:metropolis-correction}, initialized from $Q$, with
$T_n=\lceil Mn\rceil=\lceil2^{k_0}dn^2\rceil$.
Writing $K$ for its transition kernel, Lemma~\ref{lem:metro-correction} gives, whenever $\delta\le1/2$,
\[
    \|QK^{T_n}-\mu_{\bY}\|_{\TV}
       \le\delta+\frac\delta M
           +\exp\left\{-\frac{T_n(1-\delta)}M\right\}
       \le2\delta+e^{-n/2}.
\]
The last bound also holds for $\delta>1/2$, because total variation is at most one.

The input and norm safeguard of Appendix~\ref{ss:input-safeguard} rejects on an event of probability $O_\lambda(n^{-3})$ and otherwise guarantees $\|\bJ\|_F\le n^2$. On the rejection event the algorithm returns a uniform configuration, contributing at most its probability to the annealed total variation bound. Averaging the preceding Metropolis estimate, and then using convexity of total variation to average over $I$ conditional on $\bY$, proves, for the algorithm with exact random draws and scores,
\begin{equation}\label{eq:completed-sampler-tv}
    \bbE_{\bY}\|
       \widehat\mu(\cdot\mid\bY)-\mu_{\bY}\|_{\TV}
       \le 2b_n+\frac{2(C_\lambda+1/e)}{\log n}
          +e^{-n/2}+O_\lambda(n^{-3})=o(1).
\end{equation}
The case $n=1$ is exact because the diagonal observation carries no information about its uniform sign.

It remains to bound the running time and account for the finite-precision random draws.
For $n\ge2$, the definition of $k_0$ gives
\[
    2^{k_0}\le4n^{A_\lambda\log2},\qquad
    T_n\le4n^{A_\lambda\log2+3}+1,
\]
since $d\le n$. The first $n$ binary strings have length $O(\log n)$ and can be enumerated and checked in polynomial time. All retained iteration counts and the number of checkpoints are at most $n$. Their initial spectral and AMP computations, with the prescribed precision and caps, therefore take polynomial time. Each sequential generation or evaluation uses at most $n$ prefix computations, each with $L_n=\lceil n^{40}\rceil$ projected-gradient iterations in Algorithm~\ref{alg:tap-optimization}. A gradient evaluation and a Hamiltonian evaluation require at most $O(n^2)$ arithmetic operations, apart from the coordinatewise elementary functions. A call to \textsc{DrawAndScore} uses one generation and at most $d$ component evaluations. It never enumerates configurations or alternative prefixes.

Appendix~\ref{s:numerical-implementation} realizes these deterministic computations with polynomial bit cost. Its stepwise coupling changes the output law by at most $O(n^{-2}(T_n+1)^{-1})=o(1)$ when random draws and scores are implemented with finite precision. Adding this error to \eqref{eq:completed-sampler-tv} proves \eqref{eq:main-tv} for the implemented sampler. Markov's inequality then gives the stated convergence in probability. Both versions run in time $n^{O_\lambda(1)}$, completing the proof.
\end{proof}

\appendix

\section{A specification without unknown tuning constants}\label{ss:tuning-free-sampler}

The construction in Sections~\ref{s:algorithm}--\ref{s:auxiliary-proofs} gives a proposal once a valid tuning record has been fixed. Its validity depends on constants in the local-convexity and AMP estimates, whose numerical values need not be known. We now specify a single algorithm with input only $(\bY,\lambda)$. It forms a finite family of proposals and uses their average. The role of the preceding analysis is to guarantee that this family eventually contains one suitable component; the algorithm only checks the descriptions and executes the prescribed computations.

We first explain why it suffices to consider rational records. Choose the integer $L_{\rm init}$ supplied by Lemma~\ref{lem:unpinned-tap-input}. Its initialization guarantee holds for every step size in a fixed open interval, so we may choose a rational $\gamma$ in that interval at the outset. In Step~1 of the proof of Lemma~\ref{lem:convexity-barycenter}, choose rational $\rho_0,\alpha_0>0$ satisfying the displayed strict inequalities. In Step~2, choose the terminal fraction $\alpha_M$ rational, still satisfying the strict bound that makes the final interaction block small. After the finite-cover argument has supplied positive auxiliary radii and curvature constants, choose $\rho$ rational in its permitted interval and take an integer $M$ sufficiently large. The equally spaced mesh
\[
    \alpha_\ell=\alpha_0+
           \frac\ell M(\alpha_M-\alpha_0),\qquad 0\le\ell\le M,
\]
then has rational entries and satisfies the required mesh inequalities. Finally, choose the fixed integer counts $L_0,\ldots,L_{M-1}$ as in Step~3. This produces a fixed valid rational record. The auxiliary constants used to prove its validity are not entries of the record. All these choices are made for a fixed $\lambda$ before taking $n\to\infty$; in particular, we do not approximate an already chosen step size inside an iteration whose length grows with $n$.

For definiteness, fix the following binary encoding, independently of $\lambda$ and $n$. If $h$ is a positive integer and its ordinary binary expansion $\operatorname{bin}(h)$ has length $b$, encode $h$ by the word $1^b0\,\operatorname{bin}(h)$. The initial run of ones specifies the length of the final block, so concatenated integer codes can be parsed uniquely. Encode each positive rational as the pair of codes for its numerator and denominator in lowest terms. Encode a record in the order
\[
    M,\ L_{\rm init},\ L_0,\ldots,L_{M-1},\
    \gamma,\rho_0,\rho,\ \alpha_0,\ldots,\alpha_M.
\]
The first integer determines the required number of subsequent entries. The decoder rejects incomplete codes, noncanonical integer or rational encodings, missing entries, and strings with unused trailing bits. Parsing, coprimality tests, and comparisons of the rational entries use integer arithmetic.

Enumerate all finite binary strings, starting with the empty string, in increasing length and then lexicographic order. At dimension $n$, retain the first $n$ positions. Their maximum length is $\lfloor\log_2 n\rfloor$, so enumeration and decoding require polynomial time. A fixed code of length $b$ occurs among the first $2^{b+1}-1$ positions. Malformed strings are assigned the uniform proposal on $\{-1,1\}^n$. Their positions are retained in the family, so its size is always $n$.

For every decoded record, check the positive-integer requirements, $0<\gamma<1$, $\rho_0,\rho>0$, and $0<\alpha_0<\cdots<\alpha_M<1$. Also require
\begin{equation}\label{eq:tuning-record-dimension-checks}
    M,L_{\rm init},L_0,\ldots,L_{M-1}\le n,
    \qquad n^{-1}\le\gamma,\rho_0,\rho\le n,
    \qquad k_0<j_0<\cdots<j_M<n,
\end{equation}
where $j_\ell=\lfloor\alpha_\ell n\rfloor$. Check $M\le n$ before allocating arrays indexed by the checkpoints, and check every iteration count before starting its loop. The rational inequalities and floors are computed exactly from the encoded integers. Any record failing a check is assigned the uniform proposal. These tests use only the record, $n$, and the known $k_0$; they involve neither a posterior mean nor a test of local convexity.

Fix one valid rational record obtained above, and let $a_*=a_*(\lambda)$ be the position of its code in this enumeration. The index $a_*$ is finite and independent of $n$. Its fixed integer entries are eventually at most $n$, and its fixed positive scalar entries eventually belong to $[n^{-1},n]$. Moreover,
\[
    \frac{k_0}{n}\longrightarrow0,\qquad
    \alpha_0>0,\qquad
    \min_{0\le\ell<M}(\alpha_{\ell+1}-\alpha_\ell)>0,
    \qquad \alpha_M<1.
\]
These facts imply the rounded-checkpoint inequalities in \eqref{eq:tuning-record-dimension-checks} for all sufficiently large $n$. Hence this record eventually occurs among the retained strings and passes every description and dimension check. Neither its index nor the dimension at which this happens is needed by the algorithm. If $k_0=n$, every record fails the rounded-checkpoint check and all components are uniform, which is also a valid proposal specification.

We next describe the computations and safeguards for each retained component. The common scalar coefficients are computed by the certified routine in Appendix~\ref{ss:scalar-computation}. For each record that passes its checks, compute the initial stored vector by Section~\ref{ss:conditional-tap} before drawing the common independent order $I$. A failure of the common coefficient computation assigns all components the uniform proposal. A failure in the initialization of one record assigns that component the uniform proposal. In either case the corresponding TAP calls are skipped. All component definitions, stored initial vectors, and numerical rules are then fixed, and the same order $I$ is used for the entire Metropolis run.


For the fixed valid record, the budgets and numerical errors are compatible with the probabilistic inputs. At fixed $\lambda$, the common scalar precomputation succeeds for all sufficiently large $n$ by the certified polynomial bounds in Appendix~\ref{ss:scalar-computation}. The initial eigensolver safeguard is controlled in Appendix~\ref{ss:precision-implementation}. On the operator-norm event of Lemma~\ref{lem:convexity-barycenter}, the natural-gradient update in the $\bz$ coordinates has a Lipschitz constant bounded by $C_\lambda$, as follows from its displayed recursion and the bounded, $1$-Lipschitz function $\tanh$. The prescribed spectral and update errors therefore accumulate over the $n$ refinement steps to at most
\[
    n^{O(1)}C_\lambda^n2^{-n^4}=o(1).
\]
The fixed initial and checkpoint AMP counts admit the corresponding finite-iteration estimate. Together with the clipping error, these perturbations are $o(\sqrt n)$ and are absorbed by the strict ball-inclusion and curvature margins in Section~\ref{ss:checkpoint-construction}, in particular \eqref{eq:checkpoint-center-margins}. The constrained optimization and final logit errors are controlled by Lemma~\ref{lem:optimization-accuracy} and the tolerances in Section~\ref{ss:deterministic-tap-optimization}. Thus the exceptional initialization and numerical events for this one record may be included in its $b_n=o(1)$. We require this conclusion only for the fixed valid record.

For a component that reaches the sequential stage, apply Algorithm~\ref{alg:sequential-tap}. If a computation fails at the prefix $x_{S_k}$, set the entire current logit vector to zero and use conditional probabilities $1/2$ for the current coordinate and the remaining suffix. Retain the already exposed prefix. The computation and its failure tests are completed before inspecting $I_{k+1}$ or its supplied sign. If the field update fails after the current sign has been generated, the uniform suffix begins at the following step, as in the algorithm. All later TAP computations in that call are skipped.

These rules define a normalized, strictly positive mass function $Q_a$ for every component. Indeed, every finite logit defines two positive conditional probabilities adding to one, and a fallback uses the pair $(1/2,1/2)$. If the first fallback occurs after $k$ exposed coordinates, then every continuation $x$ of that prefix has probability
\[
    Q_a(x)=Q_a(x_{S_k})\,2^{-(n-k)},
\]
where $Q_a(x_{S_k})$ is the probability of reaching the prefix under the same sequential rule. Summing over its $2^{n-k}$ continuations preserves this prefix mass. Equivalently, summing the product of conditional factors successively over the coordinates gives total mass one. A component rejected before generation has $Q_a(x)=2^{-n}$ for every $x$.

To evaluate a component on a supplied configuration, start its sequential state and operation counters afresh and read that configuration in the fixed order. Recompute its phase orientation, checkpoint centers, fields, and all failure decisions from the supplied prefixes. The initial stored vector is reused, while the prefix-dependent quantities are determined within this call. Replay consequently reproduces the same deterministic logits and conditional factors that define $Q_a$. The log-density is evaluated to the prescribed accuracy using the stable formulas in Appendix~\ref{ss:random-implementation}. Increasing the accuracy of a score calculation leaves the deterministic optimization tolerances, and hence the underlying proposal, fixed. We distinguish these ideal conditional factors from their finite-precision random draws and scores; their approximation error is accounted for in Appendix~\ref{ss:random-implementation}.

For the version with no supplied record, put $d=n$ and let $Q_1,\ldots,Q_d$ be the components just constructed. If a fixed valid rational record is supplied, use it alone, put $d=1$, and apply the same numerical and fallback rules. In either case define
\begin{equation}\label{eq:tuning-mixture-definition}
    Q(x)=\frac1d\sum_{a=1}^dQ_a(x),
       \qquad x\in\{-1,1\}^n.
\end{equation}
For each proposal draw, choose a fresh uniform $a\in[d]$ and generate from $Q_a$. Conditional on the frozen data and order, this gives distribution $Q$. To obtain its score, evaluate every component on the same configuration and use
\[
    \log Q(x)=\log\left(\sum_{a=1}^d
                              e^{\log Q_a(x)}\right)-\log d.
\]
The generating component's score can be reused. Uniform components and components that use a suffix fallback are included in this sum. Both scores in the Metropolis acceptance ratio are scores of $Q$; the randomly selected index is used only to generate the proposal. The complete procedure is given in Algorithm~\ref{alg:posterior-sampler}.

The following consequence of the fixed-record guarantee explains why this construction removes the unknown tuning constants.

\begin{lem}[Proposal without a supplied tuning record]\label{lem:tuning-free-proposal}
Fix $\lambda>1$ and form $Q$ by \eqref{eq:tuning-mixture-definition}, with $d=n$ for the enumerated construction or $d=1$ for a supplied fixed valid rational record. There are $C_\lambda<\infty$ and $b_n=o(1)$ such that, for all sufficiently large $n$ and every $u>0$,
\begin{equation}\label{eq:tuning-mixture-warmness}
    \bbE_{\bJ,I}\mu_\bJ\left(
       \frac{\mu_\bJ}{Q}>d\,2^{k_0}e^u\right)
       \le b_n+\frac{C_\lambda+1/e}{u}.
\end{equation}
Every component is normalized and strictly positive. The deterministic computations for generating a proposal and evaluating its log-density have polynomial cost for fixed $\lambda$, with random draws and score approximations implemented as in Appendix~\ref{ss:random-implementation}.
\end{lem}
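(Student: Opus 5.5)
The plan is to reduce everything to the single fixed valid component $Q_{a_*}$ and then invoke Lemma~\ref{lem:nu-is-warm}. For the enumerated construction, the first step is to show that for all sufficiently large $n$ the code of the fixed valid rational record sits at position $a_*\le n$ and passes every description and dimension check in \eqref{eq:tuning-record-dimension-checks}. This was argued just before the lemma. Since every component is a nonnegative mass function, the mixture satisfies $Q(x)\ge d^{-1}Q_{a_*}(x)$ pointwise, and therefore
\[
    \left\{\frac{\mu_\bJ}{Q}>d\,2^{k_0}e^u\right\}\subset\left\{\frac{\mu_\bJ}{Q_{a_*}}>2^{k_0}e^u\right\}.
\]
For $d=1$ the inclusion is trivial. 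So \eqref{eq:tuning-mixture-warmness} follows once \eqref{eq:proposal-warmness-tail} holds for $Q_{a_*}$ as actually implemented, that is, with its safeguards, its capped numerics, and the suffix fallback rule.

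The second step is to check that the implemented $Q_{a_*}$ satisfies the hypotheses used in the proof of Lemma~\ref{lem:nu-is-warm}. I would enlarge the good events $G_k$ of Lemma~\ref{lem:convexity-barycenter} so that each also requires that the following do not fail: the shared scalar precomputation, the initialization of this record, and every prefix computation up to time $k$. All of these are determined by $\bJ$ and the exposed pairs, so prefix measurability is preserved. The failure probabilities are $o(1)$ by Appendix~\ref{ss:scalar-computation}, the eigensolver and accumulation estimates quoted above, and Lemma~\ref{lem:optimization-accuracy}; hence the enlarged $b_n$ is still $o(1)$.

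On the enlarged $G_k$ no fallback has occurred, so the implemented logits coincide with those analysed in Lemma~\ref{lem:construct-scalar}, and \eqref{eq:vector-logit-error} holds. The stopping-time construction in the proof of Lemma~\ref{lem:nu-is-warm} then applies verbatim. It uses the auxiliary suffix law $\widetilde\nu_a$, which switches to the exact conditionals $p_k$ at the first bad prefix. On paths with no bad prefix, $Q_{a_*}$ agrees with $2^{-k_0}\widetilde\nu_a$. Applying the bound $\bbE(\log R)_+\le C+1/e$ and Markov's inequality gives \eqref{eq:proposal-warmness-tail} for $Q_{a_*}$. The only point needing care is that fallback affects only bad paths, which already sit inside the exceptional probability $b_n$.

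The remaining assertions are structural. Normalization and strict positivity of each $Q_a$ follow from the prefix-mass identity $Q_a(x)=Q_a(x_{S_k})2^{-(n-k)}$ after a first fallback, together with the fact that every finite logit gives two positive conditional probabilities summing to one. For polynomial cost, note that there are $d\le n$ components; each generation or evaluation makes at most $n$ calls to \textsc{TAPOptimize}, each with $L_n$ iterations; and the record checks bound every loop length by $n$. Score approximations are handled by Appendix~\ref{ss:random-implementation}.

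The main obstacle is the second step: certifying that the numerical safeguards and fallbacks of the single valid component fail only on prefix-measurable events of vanishing probability. Only then can they be absorbed into $b_n$ without disturbing the chain-rule relative entropy bound.
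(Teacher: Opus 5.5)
Your proposal is correct and follows the paper's proof essentially step for step. You reduce to the fixed valid component $Q_{a_*}$, use $Q\ge Q_{a_*}/d$ pointwise for the set inclusion, and apply Lemma~\ref{lem:nu-is-warm} to $Q_{a_*}$ with its initialization, safeguard, and numerical failures absorbed into prefix-measurable exceptional events of total probability $b_n=o(1)$; you then read off normalization, positivity, and polynomial cost from the prefix-mass identity and the replay and cap arguments preceding the lemma.

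One minor attribution point: the vanishing probability of cap or certified-comparison failures for the valid record comes from the budget and strict-margin discussion in Appendix~\ref{s:numerical-implementation}, whereas Lemma~\ref{lem:optimization-accuracy} only controls accuracy once the routine has returned.
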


\begin{proof}
Normalization, positivity, replay, and the uniform polynomial bounds on the retained computations were established above. In the enumerated construction, choose the fixed valid rational record at position $a_*$, which is present and passes its description checks for all sufficiently large $n$. For a supplied fixed valid rational record take $a_*=1$. Lemmas~\ref{lem:construct-scalar} and~\ref{lem:nu-is-warm}, with the exceptional events described above included in $b_n$, apply to $Q_{a_*}$.

For every realization of the data and order, all summands in \eqref{eq:tuning-mixture-definition} are nonnegative, so $Q(x)\ge Q_{a_*}(x)/d$ for every $x$. Consequently,
\[
    \left\{x:\frac{\mu_\bJ(x)}{Q(x)}>d\,2^{k_0}e^u\right\}
       \subseteq
    \left\{x:\frac{\mu_\bJ(x)}{Q_{a_*}(x)}>2^{k_0}e^u\right\}.
\]
Taking posterior masses, averaging over $(\bJ,I)$, and applying \eqref{eq:proposal-warmness-tail} proves \eqref{eq:tuning-mixture-warmness}. This argument analyzes only the fixed component $a_*$; it requires no accuracy estimate or simultaneous good event for the other components. When $k_0=n$, all components are uniform and $\mu_\bJ/Q\le2^n$, so the tail event is empty.
\end{proof}

Taking $u=\log n$ in Lemma~\ref{lem:tuning-free-proposal} gives a known warmness threshold $2^{k_0}dn$ outside a set of expected posterior mass $o(1)$. Lemma~\ref{lem:metro-correction} therefore leads to the deterministic correction time
\[
    T_n=\left\lceil2^{k_0}dn^2\right\rceil.
\]
For the enumerated construction, the factor $d=n$ increases the warmness threshold by $n$ and gives $T_n=\lceil2^{k_0}n^3\rceil$. Evaluating all $n$ component scores contributes a further factor $n$ to the cost of a proposal call. Both costs are polynomial, since $2^{k_0}=n^{O_\lambda(1)}$. The algorithm uses neither the index of the valid record nor the constants governing its exceptional probability; Section~\ref{s:completion} applies the correction bound, and Appendix~\ref{s:numerical-implementation} completes the finite-precision analysis.

\section{Numerical implementation}\label{s:numerical-implementation}

This appendix realizes the computational convention in Section~\ref{s:algorithm}. We first specify the input safeguard and deterministic numerical maps. We then approximate the random draws and scores without changing those maps. All routines have polynomial bit cost for fixed $\lambda>1$, and the final coupling bounds the additional output-law error independently of the probabilistic sampling argument.

\subsection{Input and norm safeguard}\label{ss:input-safeguard}

For $n\ge2$, the input routine computes a symmetric dyadic approximation $\bJ^{\rm dyad}$ with entrywise error at most $2^{-n^4}$, subject to the caps in Appendix~\ref{ss:precision-implementation}. Failure returns an independent uniform configuration. Otherwise, using exact rational arithmetic, reject if
\[
    \sum_{i,j}(J^{\rm dyad}_{ij})^2>(n^2-1)^2.
\]
This is the safeguard called by Algorithm~\ref{alg:posterior-sampler}.

The global input and norm safeguard has vanishing probability. Its squared-norm comparison is exact rational arithmetic, so it requires no decision at a real-valued threshold. After successful input acquisition, $n\ge2$ and $\|\bJ-\bJ^{\rm dyad}\|_F\le n2^{-n^4}<1$, so passing the safeguard guarantees $\|\bJ\|_F\le n^2$, while rejection implies $\|\bJ\|_F>n^2-2$. For fixed $\lambda$, on $\|\bJ\|_F\le n^2-2$ the observation entries are bounded by $C_\lambda n^{5/2}$, and only $O_\lambda(n^4+\log n)$ bits per entry are needed for the prescribed approximation. Thus input acquisition and the norm calculation succeed within their caps on this event for all sufficiently large $n$. Their combined failure event is consequently contained in $\{\|\bJ\|_F>n^2-2\}$. Indeed, under the planted law,
\[
\begin{aligned}
    \bbP(\text{input or norm safeguard rejects})
       &\le\bbP(\|\bJ\|_F>n^2-2)\\
       &\le\frac{\lambda^4+\lambda^2(n+1)}{(n^2-2)^2}
         =O_\lambda(n^{-3}).
\end{aligned}
\]

\subsection{Scalar computation and prefix rounding}\label{ss:scalar-computation}

Use the positive scalar solution and the certified derivative lower bound from Section~\ref{ss:prelim-lem}. For example, truncate the standard Gaussian integral at $|G|\le C\sqrt{\log n}$ and use a deterministic quadrature with polynomially small mesh and certified error. On the bounded range $0\le r_\alpha q+t_\alpha\le\lambda^2$, the required integrand derivatives are uniformly bounded for fixed $\lambda$. The Gaussian tail and quadrature error can both be made $O(n^{-130})$. Bisection requires $O_\lambda(\log n)$ steps. If a certified interval for the fixed-point residual contains zero, the certified derivative lower bound on the bracket either already certifies the requested root accuracy or permits refinement of the quadrature. Thus this computation never requires an exact sign test at a root.

For the prefix length in \eqref{eq:sampling-prefix-length}, use the following deterministic rule. In the finite-precision implementation, take the ceiling of a certified upper approximation to $A_\lambda\log n$ with absolute error at most $1/4$, and then cap it at $n$. This may increase $k_0$ by at most one, never decreases it, and avoids having to decide equality at an integer boundary. All subsequent bounds hold with this implemented value of $k_0$ as well.

\subsection{Precision rules and capped routines}\label{ss:precision-implementation}

The deterministic numerical routines use the precision tolerances in Section~\ref{ss:deterministic-tap-optimization}, the bound $2^{n^3}$ on stored intermediate magnitudes, and caps of $n^{1000}$ working bits and $n^{1000}$ bit operations for each invocation of an input-acquisition, norm-evaluation, scalar, spectral, AMP, or projection routine (including its elementary-function evaluations). Fix standard polynomial-time integer arithmetic, elementary-function, and symmetric-eigenvalue implementations before running the algorithm. These deliberately generous caps dominate the costs described in Section~\ref{ss:deterministic-tap-optimization} and Appendices~\ref{ss:scalar-computation}, \ref{ss:projection-implementation}, and~\ref{ss:random-implementation} for a fixed valid record and all sufficiently large $n$. For example, the scalar quadrature may use a mesh of width at most $n^{-140}$ on its truncated Gaussian interval, followed by the certified bisection in Appendix~\ref{ss:scalar-computation}. Each routine either returns its prescribed certified output within its budget or returns a failure flag. Each invocation starts with a fresh operation counter; the rule therefore gives the same outcome when replayed on the same input. The checks in \eqref{eq:tuning-record-dimension-checks}, the fixed $L_n=\lceil n^{40}\rceil$ optimization steps, and these numerical budgets ensure polynomial running time for every retained component, including those without a convexity guarantee.

Fix the following precision rules independently of the prefix and of the next unrevealed index. Use a symmetric dyadic approximation of $\bJ$ with entrywise error at most $2^{-n^4}$, and approximate the known parameter to the same accuracy, increasing precision when necessary to meet the coefficient tolerance in Section~\ref{ss:conditional-tap} and the vector tolerances in Section~\ref{ss:deterministic-tap-optimization}. Compute the initial spectral vector, each initial or checkpoint AMP update, and each natural-gradient update to Euclidean error at most $2^{-n^4}$. Use a capped eigensolver that certifies a gap larger than $n^{-1}$ between the two largest eigenvalues; failure of this test invokes the initial uniform safeguard. For fixed $\lambda>1$, the separated leading eigenvalue used in the spectral initialization of \cite{montanari2021estimation,celentano2023local} makes this safeguard inactive with probability tending to one. On that event, matrix perturbation changes the normalized spectral vector up to a global sign by at most $n^{O(1)}2^{-n^4}$; an additional $O_\lambda(\log n)$ bits suffice to meet the stated vector tolerance. The initial AMP and natural-gradient recursions are odd, and the prefix orientation removes a common sign change. Thus this eigenspace perturbation estimate does not require continuity of the deterministic eigenvector sign convention.

Store each center as a dyadic vector and the projected iterates on a common dyadic grid containing that center. Take the exact rational average of the stored pre-update iterates; this average is feasible. Projection and feasible rounding are specified in Appendix~\ref{ss:projection-implementation}. Safeguard comparisons use certified intervals, with an unresolved comparison treated as a failure; the strict margins on the good events make these extra failures negligible. The requested precision is polynomial in $n$; in particular, evaluating $\operatorname{arctanh}$ on the clipped cube requires only $O(n+\log n)$ bits of working precision for the stated absolute accuracy, since $1-|u_i|\ge2^{-n}$.

\subsection{Projection and feasible rounding}\label{ss:projection-implementation}

We also make explicit that projection onto the prescribed domains is a polynomial-time operation. Fix an input prefix $x_{S_k}$. For its cube--ball domain $\cK_k(x_{S_k})$, write $\bv$ locally for the restricted stored center specified in \eqref{eq:sampling-checkpoint-domains}, let $\by$ be the vector to project, and consider
\[
    \bu(s)=\operatorname{Proj}_{D_{R_k,n}}
          \bigl(\bv+s(\by-\bv)\bigr),\qquad 0\le s\le1.
\]
Here cube projection is coordinatewise clipping. Since $\bv$ belongs to the cube, $\|\bu(s)-\bv\|$ is continuous and nondecreasing in $s$. If $\bu(1)$ lies in the ball, it is the required projection. Otherwise, the multiplier optimality equations show that the projection is $\bu(s_*)$, where $s_*$ is the largest $s$ for which $\bu(s)$ lies in the ball. Bisection on $[0,1]$, retaining the feasible endpoint, approximates this point while preserving feasibility. Feasibility is decided by comparing the squared distance with $\rho^2n$ (or $\rho_0^2n$), an exact rational comparison at dyadic bisection points; equality is accepted. The bound
$\|\bu(s)-\bu(t)\|\le|s-t|\,\|\by-\bv\|$
shows that polynomially many bisection steps suffice for the prescribed accuracy, even under the magnitude caps. After choosing the feasible bisection endpoint, round each coordinate toward the corresponding coordinate of the stored rational center. Each coordinate then stays in the cube and its distance to the center can only decrease, so the rounded point remains in the ball. A sufficiently fine common dyadic grid makes the total Euclidean rounding error smaller than the prescribed projection tolerance. In the whole-cube case, clipping followed by rounding toward zero is feasible. The scalar quadrature and bisection, the clipped entropy gradient, and this feasible rounding are performed to the accuracies in Sections~\ref{ss:conditional-tap} and~\ref{ss:deterministic-tap-optimization}. The dyadic endpoint $c_n=1-2^{-n}$ has an exact $O(n)$-bit representation; the initialization accuracy $2^{-n^4}$ likewise requires only polynomially many bits. Thus the deterministic computations defining every proposal have polynomial cost, including for records without any convexity guarantee.

\subsection{Scores, random draws, and output-law comparison}\label{ss:random-implementation}

Fix the deterministic numerical maps specified above first; their allowed errors have already been incorporated in Lemma~\ref{lem:construct-scalar}. They define the same positive component probabilities in generation and replay. It remains only to approximate the random draws and the log-density and acceptance calculations. Accumulate component log-densities using
\[
    \log(2\cosh u)=|u|+\log(1+e^{-2|u|}),
\]
and mixture log-densities using log-sum-exp. In particular, exponentially small probabilities do not require multiplication in ordinary floating-point arithmetic. Take the total proposal-draw error in total variation and the uniform acceptance-probability error per complete Metropolis step, as well as the error in the initial random choices, at most
\[
    \xi_n=n^{-2}(T_n+1)^{-2}.
\]
This can be achieved by approximating each sequential Bernoulli probability to absolute error $\xi_n/(10n)$ and the Hamiltonian and mixture log-scores to absolute error $\xi_n/10$, with smaller fixed constants if needed. Indeed, a sequential coupling sums the Bernoulli errors, log-sum-exp is $1$-Lipschitz in the maximum norm, and $u\mapsto\min\{1,e^u\}$ is $1$-Lipschitz. Rounding the final random choices to sufficiently fine dyadic probabilities also has arbitrarily small absolute error. The magnitude caps and $\log(1/\xi_n)=O_\lambda(\log n)$ make these operations polynomial-time for fixed $\lambda$.

The independent permutation and component choices also admit a bounded-cost implementation. To approximate a uniform integer in $[d]$, draw $b$ fair bits, regard them as $U\in\{0,\ldots,2^b-1\}/2^b$, and return $1+\lfloor dU\rfloor$. Its total variation error is at most $d2^{-b}$. Applying this to the successive choices in a permutation and to each mixture index, with $b$ large enough that their total error per relevant draw is at most $\xi_n$, uses $O_\lambda(\log n)$ bits per choice. This avoids an unbounded rejection loop. The same finite-bit construction realizes Bernoulli draws to the stated absolute accuracy. In the comparison below, the ideal order is exactly uniform and independent of the data; the approximation error of its implemented version is charged only once.

Couple the ideal and implemented chains step by step, using the same fixed proposal definition. Until the first discrepancy, their states coincide; the discrepancy probability at each step is $O(\xi_n)$. The additional output-law error is therefore at most $O((T_n+1)\xi_n)=o(1)$. Since $(T_n+1)\xi_n=n^{-2}(T_n+1)^{-1}$, this is the additional error used in Section~\ref{s:completion}. The construction uses polynomially many bit operations for each fixed $\lambda>1$ and for every retained record, completing the numerical implementation.

\subsection*{Acknowledgment}

This work is partially supported by the NSFC Major Program (Project No. 12595294), the NSFC Key Program (Project No. 12231002), and the New Cornerstone Investigator Program (Project No. NCI202501).

\subsection*{Statement on AI usage}

This project grew out of the author's exploration on applying Lemma~\ref{lem:metro-correction} in several related sampling tasks. During its development, the author interacted extensively with GPT-6 Astra, posing the problem to it and providing it with a set of notes from earlier work and from the related models. Working from these notes, GPT-6 Astra suggested the martingale argument in Lemma~\ref{lem:convexity-barycenter}, the tuning-constants free specification in Appendix~\ref{ss:tuning-free-sampler}, and concluded the proof following successive rounds of interactive refinement. In addition, the author used large language models to polish and edit the writing. The author verified all mathematical content themselves, wrote the introduction and the complete proof in their present form, and takes full responsibility for the content of this paper.

\bibliographystyle{alpha}
\bibliography{Z2sampling}

\end{document}